\renewcommand\d{\partial}
\renewcommand\a{\alpha}
\renewcommand\b{\beta}
\renewcommand\o{\omega}
\newcommand\s{\sigma}
\renewcommand\t{\tau}
\newcommand\R{\mathbb R}
\newcommand\C{\mathbb C}\newcommand\N{\mathbb N}
\newcommand\un{\underline}
\def\g{\gamma}
\def\t{\tau}
\def\O{\Omega}
\def\l{\lambda}
\def\L{\Lambda}
\def\eps{\varepsilon }
\def\e{\varepsilon}
\newcommand\kernel{\hbox{\rm Ker}}
\newcommand\br{\begin{rem}}
\newcommand\er{\end{rem}}
\newcommand\bp{\begin{pmatrix}}
\newcommand\ep{\end{pmatrix}}
\newcommand\be{\begin{equation}}
\newcommand\ee{\end{equation}}
\newcommand\ba{\begin{equation}\begin{aligned}}
\newcommand\ea{\end{aligned}\end{equation}}
\newcommand{\CalB}{\mathcal{B}}
\newcommand{\CalC}{\mathcal{C}}
\newcommand{\CalO}{\mathcal{O}}
\newcommand{\CalT}{\mathcal{T}}
\newcommand{\mA}{{\mathbb A}}
\newcommand{\RR}{{\mathbb R}}
\newcommand{\ZZ}{{\mathbb Z}}
\newcommand{\Id}{{\rm Id }}
\newcommand{\Range}{{\rm Range }}
\newcommand{\sgn}{\text{\rm sgn}}
\newtheorem{theo}{Theorem}[section]
\newtheorem{prop}[theo]{Proposition}
\newtheorem{cor}[theo]{Corollary}
\newtheorem{lem}[theo]{Lemma}
\newtheorem{defi}[theo]{Definition}
\newtheorem{ass}[theo]{Assumption}
\newtheorem{rem}[theo]{Remark}
\numberwithin{equation}{section}
\title{Transition to longitudinal instability of detonation waves is generically associated with Hopf bifurcation to time-periodic galloping solutions}
\author{\sc \small Benjamin Texier\thanks{
Universit\'e Paris Diderot (Paris 7), Institut de Math\'ematiques de Jussieu, UMR CNRS 7586;
texier@math.jussieu.fr:
Research of B.T.  was partially supported
under NSF grant number DMS-0505780.
} and
Kevin Zumbrun\thanks{Indiana University, Bloomington, IN 47405;
kzumbrun@indiana.edu:
Research of K.Z. was partially supported
under NSF grants no. DMS-0300487 and DMS-0801745.
 }}
\begin{document}

\maketitle

\begin{abstract}

We show that transition to longitudinal
instability of strong detonation solutions of reactive
compressible Navier--Stokes equations is generically
associated with Hopf bifurcation to nearby time-periodic  
``galloping'', or ``pulsating'', solutions, 
in agreement with physical and numerical observation.
In the process, we 
determine readily numerically verifiable stability and bifurcation conditions
in terms of an associated Evans function,
and obtain the first complete nonlinear stability
result for strong detonations of the reacting Navier--Stokes equations,
in the limit as amplitude (hence also heat release) goes to zero.
The analysis is by pointwise semigroup techniques
introduced by the authors and collaborators in previous works.
\end{abstract}

\tableofcontents
\bigbreak
\section{Introduction}

Motivated by physical and numerical observations of 
time-oscillatory ``galloping''
or ``pulsating'' instabilities of detonation waves
\cite{MT, BMR, FW, MT, ALT, AT, F1, F2, KS},
we study
stability and
Hopf bifurcation of viscous detonation waves,
or traveling-wave solutions of the reactive 
compressible Navier--Stokes equations.
This extends a larger program begun in 
\cite{ZKochel, LyZ1, LyZ2, JLW, LRTZ} 
toward the dynamical study of viscous combustion waves
using Evans function/inverse Laplace transform techniques
introduced in the context of viscous shock waves 
\cite{GZ, ZH, ZS, ZKochel, MaZ3}, 
continuing the line of investigation
initiated in \cite{TZ1, TZ2, SS, TZ3}
on bifurcation/transition to instability.

It has long been observed that transition to 
instability of detonation waves occurs in certain predictable
ways, with the archetypal behavior in the case of
longitudinal, or one-dimensional instability being transition
from a steady planar progressing wave $U(x,t)=\bar U(x_1-st)$
to a galloping, or time-periodic planar progressing wave
$\tilde U(x_1-st, t)$, where $\tilde U$ is periodic in the
second coordinate, and in the case of transverse, or multi-dimensional
instability, transition to more complicated ``spinning''
 or ``cellular behavior''; see \cite{KS, TZ1, TZ2}, and
references therein.

The purpose of this paper is, restricting to the one-dimensional
case, to establish this principle {rigorously}, 
arguing from first principles from the physical equations
that 
{\it transition to longitudinal instability of
detonation waves is {generically
associated with} Hopf bifurcation to time-periodic galloping solutions},
not only at the spectral but also at the full nonlinear level.
In the process, we establish the first full nonlinear stability results
for strong detonations of the reacting Navier--Stokes equations,
extending the sole previous result obtained
by Tan--Tesei \cite{TT} 
for the special class of 
initial perturbations with zero 
integral.

\subsection{The reacting Navier-Stokes equations}\label{equations}

The single-species reactive compressible Navier--Stokes equations, 
in Lagrangian coordinates, appear as \cite{Ch}
\begin{equation}\label{rNS}
\left\{ \begin{aligned}
 \d_t \tau - \d_x u & = 0,\\
 \d_t u + \d_x p  & = \d_x (\nu\tau^{-1} \d_x u),\\
 \d_t E + \d_x(pu) & = 
\d_x \big(qd \t^{-2} \d_x z+
\kappa \tau^{-1} \d_x T + \nu\tau^{-1} u \d_x u\big),\\
 \d_t z + k \phi(T) z & = \d_x (d \t^{-2} \d_x z),\\
\end{aligned}\right.
\end{equation}
where $\tau>0$ denotes specific volume, $u$ velocity, $E > 0$ total specific energy, and $0 \leq z \leq 1$ mass fraction of the reactant.

 The variable
 $$ U := (\tau,u,E,z) \in \R^4 $$
 depend on time $t \in \R_+,$ position $x \in \R,$ and 
parameters $\nu,\kappa, d,k,q,$ 
where $\nu >0$ is a viscosity coefficient, $\kappa> 0$ and $d>0$
are respectively coefficients of heat conduction and species diffusion, $k > 0$ represents the rate of the reaction, and $q$ is the heat release parameter,
with $q>0$ corresponding to an exothermic reaction and $q<0$ to
an endothermic reaction.  

In \eqref{rNS}, $T=T(\tau,e,z)>0$ represents temperature, 
$p=p(\tau,e,z)$ pressure, where the internal energy $ e > 0$ is defined through the relation
$$
E = e+ \frac{1}{2} u^2 + qz.
$$

In \eqref{rNS}, we assume a simple one-step, one-reactant, one-product reaction
 $$
  A \stackrel{k \phi(T)}{\longrightarrow} B, \qquad z := [A\,], \qquad [A\,] + [B\,] = 1.
 $$ 
 where $\phi$ is an ignition function. More realistic reaction models are described in \cite{GS2}. 
 
 In the variable $U,$  after the shift 
  $$x \to x - s t, \qquad s \in \R,$$
   the system \eqref{rNS} takes the form of a system of differential equations
  \begin{equation} \label{rNS-symb}
  \d_t U + \d_x (F(U)) = \d_x (B(U) \d_x U) + G(U),
  \end{equation}
 where
 $$ F := \left(\begin{array}{c} -u \\ p \\ p u \\ 0\end{array}\right) - s(\e) U, \qquad G := \left(\begin{array}{c} 0 \\ 0 \\ 0 \\ - k \phi(T) z \end{array}\right),$$ 
 and
 $$ B := \left(\begin{array}{cccc} 0 & 0 & 0 & 0 \\ 0 & \nu \tau^{-1} & 0 & 0 \\ \kappa \tau^{-1} \d_\tau T & - \kappa u \tau^{-1} \d_{e} T + \nu \t^{-1} u & \kappa \tau^{-1} \d_{e} T & \kappa \tau^{-1} (\d_z T - q \d_{e} T) + q d \t^{-2}\\ 0 &  0 &0 & d \t^{-2}\end{array}\right).$$ 
   
 The  characteristic speeds of the first-order part of  \eqref{rNS}, i.e.,
the eigenvalues of $\d_U F(U)$,  are
 \begin{equation} \label{char}
  \{ \quad \underbrace{- s - \s, \quad -s, \quad - s + \s}_{\mbox{fluid eigenvalues}},\underbrace{-s}_{\mbox{reactive eigenvalue}}\},
  \end{equation}
  where $\s,$ the sound speed of the gas, is
   $$
  \s := (p \d_{e} p - \d_\tau p)^{\frac{1}{2}} = \tau^{-1} (\Gamma(\Gamma + 1) e)^{\frac{1}{2}}.
  $$

 \subsection{Assumptions}

 We make the following assumptions: 
 
 \begin{ass} \label{idealgas} We assume a reaction-independent ideal gas equation of state,
\be\label{idealeos}
p=\Gamma \tau^{-1} e,
\qquad
T=c^{-1} e, 
\ee
where $c>0$ is the specific heat constant and
$\Gamma$ is the Gruneisen constant. 
\end{ass}

\begin{ass} \label{ignition} 
 The ignition function $\phi$ is smooth; it vanishes identically for $T \leq T_i,$ and is strictly positive for $T > T_i.$ 
\end{ass} 

\br\label{Ar}
A typical ignition function is given by the modified Arrhenius law
\be\label{Arlaw}
\phi(T)=Ce^{\frac{\mathcal{E}}{T-T_i}},
\ee
where $\mathcal{E}$ is activation energy.
\er

\br\label{eosrmk}
The specific choice \eqref{idealeos} is made for concreteness/clarity of exposition.
Our results 
remain valid
for any reaction--independent equation of state with
$p_\tau<0$, $p_e>0$, and $T_e>0$.\footnote{
An obvious exception is Lemma \ref{ex}, which depends on specific structure.}
With further effort, reaction-dependence should be treatable 
as well.
\er

\subsection{Coordinatizations} \label{sec:coord0}
 We let
 $$
  w := (u,E,z) \in \R^3, \qquad v := (\tau,u,E) \in \R^3.
 $$
 Then we have the coordinatizations
 $$ U = (v,z) = (\tau ,w).$$

In particular, Assumption \ref{idealgas} implies that in the $(\tau,w)$ coordinatization, $B$ takes the block-diagonal form 
 $$
 B = \left(\begin{array}{cc} 0 & 0 \\ 0 & b \end{array}\right),
 $$
where $b$ is full rank for all values of the parameters and $U;$ the system \eqref{rNS-symb} in $(\t,w)$ coordinates is 
 $$\left\{ \begin{aligned}
  \d_t \t  - s \d_x \t - J \d_x w & =  0, \\
  \d_t w + \d_x f(\t,w) & =  \d_x (  b(\t,w) \d_x w) + g(w),
  \end{aligned} \right. $$
 with the notation
 \begin{equation} \label{J}
 J := \left(\begin{array}{ccc} 1 & 0 & 0 \end{array}\right), \quad f := \left(\begin{array}{c} p \\ p u \\ 0 \end{array}\right) - s w, \quad g := \left(\begin{array}{c} 0 \\ 0 \\ - k \phi(T) z \end{array}\right).
 \end{equation}
 In the $(v,z)$ coordinatization, the system \eqref{rNS-symb} takes the form
$$ \left\{ \begin{aligned}
  \d_t v + \d_x f^\sharp(v,z) & = \d_x (b_1^\sharp(v) \d_x v + b^\sharp_2(v) \d_x z) \\
  \d_t z - s \d_x z + k\phi(T) z & = \d_x (d \t^{-2}\d_x z).
  \end{aligned} \right.
 $$
 where the flux is
$ f^\sharp = (- u - s \t, p - s u , pu - s E),$ and, under Assumption \ref{idealgas}, the diffusion matrices are
$$b_1^\sharp = \left(\begin{array}{ccc} 0 & 0 & 0 \\ 0 & \nu \t^{-1} & 0 \\ 0 & \t^{-1}(\nu - \kappa c^{-1}) u & \kappa \t^{-1} c^{-1} \end{array}\right), \qquad b_2^\sharp = \left(\begin{array}{c} 0 \\ 0 \\ q (d \t^{-2} - \kappa \t^{-1})\end{array}\right).$$

 Note that, in the $(v,z)$ coordinatization, the first component is a conservative variable, in the sense that $\d_t v$ is a perfect derivative, hence
\be\label{cons}
\int_\R  ( v(x,t)-
v(x,0)) \, dx\equiv 0,
\ee
for $v(t)-v(0)\in W^{2,1}(\R)$.
\subsection{Strong detonations} \label{strong}
We prove in this article stability and bifurcation results for viscous strong detonations of \eqref{rNS}, defined as follows:

\begin{defi} \label{def:strong} A one-parameter, right-going family of 
{\it viscous strong detonations} is a family $\{\bar U^\e\}_{\e \in \R}$ 
of smooth stationary solutions of \eqref{rNS-symb}, 
associated with speeds $s(\e),$ 
$s(\e) > 0,$ model parameters $(\nu,\kappa, d,k,q)(\eps)$ and 
ignition function $\phi^\eps$, 
with $\bar U^\eps, \phi^\eps, (s, \nu,\kappa, d,k,q)(\eps)$ depending
smoothly on $\eps$ in $L^\infty \times L^\infty \times \RR^6$, 
satisfying 
\be\label{profile}
\bar U^\e(x,t)=\bar U^\e(x),
\qquad \lim_{x\to \pm \infty} \bar U^\e(x)=U^\e_\pm,
\ee
connecting a burned state on the left to an unburned state on the right,
\be\label{burnedstates}
z^\e_- \equiv 0, \quad z^\e_+ \equiv 1,
\ee
with a temperature on the burned side above ignition temperature
\begin{equation} \label{burned-temp}
 T_-^\e > T_i,
 \end{equation}
and satisfying the Lax characteristic conditions
\be\label{Lax}
\s_- := \s(U^\e_-) > s > \s_+ := \s(U^\e_+),
\ee
uniformly in $\e.$
\end{defi}

\begin{figure}[t]
\begin{center}
\scalebox{0.5}{\input{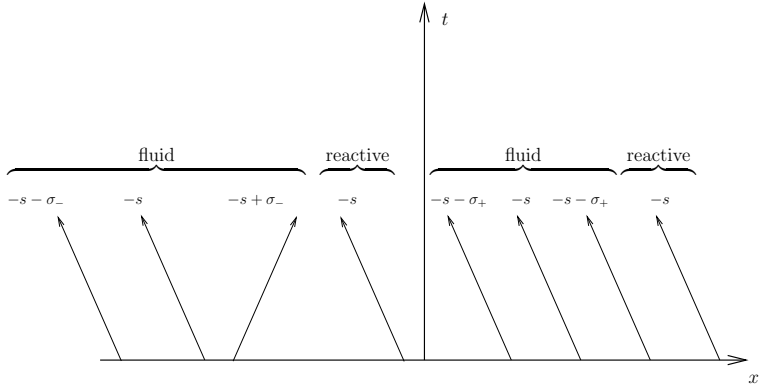}}
\caption{Characteristic speeds for strong detonations.}
\label{fig1}
\end{center}
\end{figure}

Consider a standing wave \eqref{profile}, $U = (\t,u,E,z),$ solution of \eqref{rNS-symb}, with endstates $U_\pm = (\t_\pm,u_\pm,E_\pm,z_\pm).$ It satisfies the linear constraint
$$
- s (\t - \t_-) = u - u_-,
$$
the system of ordinary differential equations 
 \begin{equation}\label{tw-ode} \left\{\begin{aligned}
 \nu \t^{-1}u' & = p - s u - (p -su)_-, \\
 \kappa \t^{-1} c^{-1} E' + \t^{-1}(\nu - \kappa c^{-1})u u'  & = pu - s E - (pu - s E)_-  \\ & \quad + (\kappa \t^{-1} c^{-1} - d \t^{-2}) q y, \\ z' & = y, \\
 d \t^{-2} y' & = - s y + k \phi(T) z,
   \end{aligned}\right.
   \end{equation}
   and the Rankine-Hugoniot relations
  \begin{equation}
  \label{RH} 
   \left\{ 
   \begin{aligned} 
   - s (\t_+ - \t_-) & = u_+ - u_-, \\ (p - s u)_+ & = (p - s u)_-, \\ (p u - s E)_+ & = (p u - s E)_-, \\  
    y_\pm   &= 0, \\ \phi(T_\pm) z_\pm & = 0,
    \end{aligned}
    \right.
    \end{equation}
   expressing the fact that $(u_\pm, E_\pm, 0, z_\pm)$ are rest points of \eqref{tw-ode}. 
 
  From \eqref{Lax} and \eqref{RH}, we note that the right endstate of a strong detonation satisfies 
 \begin{equation} \label{ignition-right}
  \phi(T^\e_+)=0,
  \end{equation}
  which, by Assumption \ref{ignition}, implies also 
 \begin{equation} \label{ignition-right-prime}
 \phi'(T^\e_+) = 0.
 \end{equation}

\begin{lem} \label{ex} Under Assumptions {\rm \ref{idealgas}}, {\rm \ref{ignition}}, if $q >0$ and $s$ is large enough with respect to $q,$ then for any $z_+ \in (0,1],$ there exists an open subset ${\cal O}_-$ in $\R^3,$ such that any left endstate $U_- = (v_-,0)$ with $v_- \in {\cal O}_-$ satisfies \eqref{burned-temp} and \eqref{Lax}, and is associated with a right endstate $U_+ = (v_+,z_+)$ satisfying $T_+ < T_i,$ \eqref{Lax} and \eqref{RH}. 
\end{lem}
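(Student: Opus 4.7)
The plan is to reduce the Rankine--Hugoniot system \eqref{RH} under the ideal-gas law to a scalar quadratic equation in the jump $\delta := \tau_+ - \tau_-$, and then to analyze it as a small-$q$ perturbation of the classical ($q=0$) gas-dynamic shock. The mass and momentum components of \eqref{RH} give $u_+ = u_- - s\delta$ and $p_+ = p_- - s^2\delta$; substituting $p = \Gamma e/\tau$ and $e_- = \tau_- p_-/\Gamma$ into the energy-jump relation $e_- - e_+ + \tfrac{1}{2}(p_++p_-)(\tau_- - \tau_+) = qz_+$, which is the third component of \eqref{RH} after $u_+ - u_-$ is eliminated, yields the scalar quadratic $A\delta^2 + B\delta = C$ with $A := s^2(2+\Gamma)$, $B := 2[\tau_- s^2 - (1+\Gamma)p_-]$, and $C := 2\Gamma q z_+$.

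At $q=0$ the nontrivial root is $\delta_0 = -B/A = 2[(1+\Gamma)p_- - \tau_-s^2]/[s^2(2+\Gamma)]$, which is positive precisely when $s < \sigma_- = \sqrt{(\Gamma+1)p_-/\tau_-}$, i.e.\ the first Lax inequality holds. A direct calculation of $\sigma_+^2 = (\Gamma+1)p_+^0/\tau_+^0$ with $p_+^0 = (2s^2\tau_- - \Gamma p_-)/(\Gamma+2)$ and $\tau_+^0 = \tau_- + \delta_0$ then shows that the second Lax inequality $\sigma_+ < s$ is automatic whenever $\delta_0 > 0$. For fixed $s$ and $\tau_-$, the joint conditions $p_+^0 > 0$ and $\delta_0 > 0$ cut out the open interval $e_- \in (\tau_-^2 s^2/(\Gamma(\Gamma+1)),\, 2\tau_-^2 s^2/\Gamma^2)$; on this interval $T_+^0 = e_+^0/c$ depends continuously on $e_-$ and tends to $0$ as $e_-$ approaches the upper endpoint, so $T_+^0 < T_i$ holds on an open sub-interval near that endpoint. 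For $s$ sufficiently large, the lower endpoint itself exceeds $cT_i$, giving $T_- > T_i$ throughout the window. With $u_-$ arbitrary, this furnishes an open set $\mathcal O_-^0 \subset \R^3$ of left endstates solving the classical ($q=0$) problem with all desired properties.

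For $q > 0$, the relevant root $\delta_q = (-B + \sqrt{B^2 + 4AC})/(2A)$ depends smoothly on $(v_-, s, q)$ so long as $|B|$ stays bounded below, which is uniform in a neighborhood of the $q=0$ construction because $s$ there lies strictly between $\sqrt{\Gamma p_-/(2\tau_-)}$ and $\sigma_-$. The quantitative content of ``$s$ large with respect to $q$'' is precisely that $C/|B| \sim q/s^2$ be small, in which case $v_+ = v_+(v_-, s, q)$ is a small smooth perturbation of its $q=0$ counterpart. Since $T_- > T_i$, $T_+ < T_i$, and $\sigma_- > s > \sigma_+$ are all open conditions, they persist, and $\mathcal O_-$ is taken to be any sufficiently small open subneighborhood of $\mathcal O_-^0$ on which they remain valid. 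The only substantive step is the explicit scalar reduction together with the verification that the intervals carved out by the Lax and ignition thresholds overlap non-trivially; once that is in hand, the rest is a standard perturbation/open-condition argument.
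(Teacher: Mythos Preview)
Your reduction of the Rankine--Hugoniot system to the scalar quadratic $A\delta^2 + B\delta = C$ is correct and gives a clean route, different from the paper's Rayleigh/Hugoniot-curve geometry with explicit large-$s$ asymptotics. However, the perturbation step contains a genuine gap.

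You claim that for $C/|B| \sim q/s^2$ small, the right state $v_+$ is a small perturbation of its $q=0$ value $v_+^0$, so that the open conditions $T_+ < T_i$, $\sigma_+ < s$, etc.\ persist. This fails for the energy component. With $\eta := \delta_q - \delta_0 \sim C/|B| \sim q/s^2$, one has $p_+ - p_+^0 = -s^2\eta$, which is of order $q$ (not $q/s^2$), and from the energy relation
\[
e_+ - e_+^0 \;=\; -qz_+ \;-\; \eta\, p_+^0 \;+\; \tfrac{1}{2}s^2\eta^2 \;=\; -qz_+ + O(q/s^2).
\]
Thus the shift in $e_+$ is essentially $-qz_+$, of \emph{fixed} size in $s$, and this occurs precisely in the regime $e_+^0 = O(1)$ where you must work to obtain $T_+^0 < T_i$. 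For $q$ not small relative to $cT_i$, this drives $e_+$ negative, and your ``open conditions persist'' argument breaks down.

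The repair is easy within your framework. Rather than perturbing from a $q=0$ state with $e_+^0 < cT_i$, observe that as $e_-$ ranges over your interval, $e_+^0$ sweeps continuously from $0$ up to $O(s^2)$; hence one may choose $e_-$ so that $e_+^0$ lies in the \emph{shifted} window $(qz_+,\, qz_+ + cT_i)$, which is nonempty and contained in the range of $e_+^0$ once $s^2$ is large compared with $q$. Then $e_+ \approx e_+^0 - qz_+ \in (0, cT_i)$, giving $0 < T_+ < T_i$; the remaining conditions $T_- > T_i$ and $\sigma_- > s > \sigma_+$ do persist, since the relevant perturbations there are genuinely $O(q/s)$ or $O(q/s^2)$. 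Alternatively, bypass the perturbation entirely and analyze the explicit root $\delta_q$ directly. The paper's proof effectively does the latter, locating admissible $(\tau_-, \tilde p_-)$ via $s^{-1}$ expansions; your quadratic reduction is arguably cleaner once this gap is patched.
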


 The existence of strong detonations was proved by Gasser and Szmolyan \cite{GS1} for small dissipation coefficients $\nu, \kappa$ and $d.$ We restrict throughout the article to strong detonations with left endstates as in the above Lemma. 
   
 \begin{rem}\label{smallq}
 In the small-heat-release limit $q\to 0,$ the equations in $(y,z)$ (in system \eqref{tw-ode}) are decoupled from the fluid equations; in particular, strong detonations converge to ordinary
nonreacting gas-dynamical shocks of standard Lax type, the existence of which 
has been established by Gilbarg {\rm \cite{G}.}    
 \end{rem}

 A consequence of Lemma \ref{ex} is that strong detonations converge exponentially to their endstates, a key fact of the subsequent stability and bifurcation analysis. 
 
\begin{cor}\label{profdecay} Under Assumptions {\rm \ref{idealgas}}, {\rm \ref{ignition}}, 
let $\{\bar U^\e\}_\e$
be a family of viscous strong detonations. There exist $C, \eta_0 >0,$ such that, for $k \geq 0$ and $j \in \{0,1\},$ 
\be\label{profdecayeq} \begin{aligned}
|\d_\e^j \partial_x^k (\bar U^\e-U^\e_-)(x)| & \le Ce^{-\eta_0|x|}, \qquad x < 0, \\
|\d_\e^j \partial_x^k (\bar U^\e-U^\e_+)(x)| & \le Ce^{-\eta_0|x|}, \qquad x > 0.
  \end{aligned}
\ee 
\end{cor}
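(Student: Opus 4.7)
The plan is to apply the stable/unstable manifold theorem with parameters to the traveling-wave ODE \eqref{tw-ode}, recast as a first-order autonomous system on $\RR^4$ in the state $Y := (u, E, z, y)$ with $\t$ eliminated via the conservation relation $-s(\t - \t_\pm) = u - u_\pm$. The Rankine-Hugoniot relations \eqref{RH} identify $U_\pm^\e$ (augmented by $y = 0$) as rest points of this system, and $\bar U^\e$ is the heteroclinic orbit connecting them. Smooth $\e$-dependence of the (un)stable manifold will then yield both the pointwise exponential decay and the $j = 1$ bound, provided one verifies that the linearizations at the two rest points admit exponential dichotomies of the appropriate dimensions, uniformly in $\e$.

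At $U_-^\e$ the full $4 \times 4$ linearization is hyperbolic: the reactive $(z,y)$-block has characteristic polynomial $\lambda^2 + sd^{-1}(\t_-^\e)^2 \lambda - kd^{-1}(\t_-^\e)^2 \phi(T_-^\e)$, whose constant term is strictly negative by \eqref{burned-temp} and Assumption \ref{ignition}, hence two real eigenvalues of opposite sign; the fluid $(u,E)$-block is hyperbolic by the strict Lax condition $\s_-^\e > s(\e)$ through a standard viscous-shock computation; and the cross-couplings between the two blocks, being continuous in the parameters and agreeing with the non-reacting Lax-shock structure in the limit $q \to 0$ (Remark \ref{smallq}), preserve hyperbolicity by openness of the hyperbolic spectrum. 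Thus $U_-^\e$ is hyperbolic uniformly in $\e$, and $\bar U^\e$ lies on its smoothly $\e$-parametrized unstable manifold.

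At $U_+^\e$ the reactive block is degenerate: by \eqref{ignition-right}--\eqref{ignition-right-prime} it equals $\bigl(\begin{smallmatrix}0 & 1 \\ 0 & -sd^{-1}(\t_+^\e)^2\end{smallmatrix}\bigr)$, carrying a zero eigenvalue in the $z$-direction. This is the main obstacle, and it is circumvented by exploiting the stronger property that $\phi \equiv 0$ on $\{T \leq T_i\}$ (Assumption \ref{ignition}) combined with $T_+^\e < T_i$ (Lemma \ref{ex}): on the tail $x \gg 1$ of the profile, $\phi$ vanishes identically along $\bar U^\e$, so the nonlinear $y$-equation reduces exactly to $d\t^{-2} y' = -sy$, giving $|y(x)| \lesssim e^{-\eta_0 x}$, and then $z(x) - z_+^\e = -\int_x^\infty y(t)\,dt$ inherits the same rate. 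Together with the fluid block's hyperbolicity from $\s_+^\e < s(\e)$, this establishes \eqref{profdecayeq} for $j = k = 0$, with $\eta_0 > 0$ uniform in $\e$ by continuity of the spectral gap.

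Higher $x$-derivatives follow by repeated differentiation of the profile ODE: each $\d_x^k \bar U^\e$ is a smooth function of $\bar U^\e - U_\pm^\e$ and its lower $x$-derivatives that vanishes at the rest point, hence decays at rate $\eta_0$. For $j = 1$, either invoke $C^1$ dependence on $\e$ in the invariant-manifold parametrization, or view $\d_\e(\bar U^\e - U_\pm^\e)$ as solving the linear variational ODE: its homogeneous operator has the same exponential dichotomy as the frozen-coefficient limits at $\pm \infty$, and the forcing is a smooth function of $\e$ times $\bar U^\e - U_\pm^\e$ and its $x$-derivatives (plus controlled contributions from $\d_\e U_\pm^\e$), hence exponentially small; a standard variation-of-parameters argument in the dichotomy then gives the bound.
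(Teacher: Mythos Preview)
Your approach is correct and the overall strategy matches the paper's, but your treatment of the degenerate $+\infty$ endpoint is genuinely different. Where you exploit $T_+^\varepsilon < T_i$ and Assumption \ref{ignition} to observe that $\phi(T(\bar U^\varepsilon(x))) \equiv 0$ for all large $x$ and then integrate the decoupled $y$-equation explicitly, the paper argues instead via center manifolds: it uses the same Assumption \ref{ignition} to produce a \emph{line} of equilibria through $U_+^\varepsilon$ (parametrized by $z_+$), observes that by dimension count any one-dimensional center manifold must consist entirely of these equilibria, and concludes that the center-stable manifold is a union of genuine stable manifolds---so the orbit lies on the stable manifold of one of them and decays exponentially by the stable manifold theorem. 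Your direct-integration argument is more elementary; the paper's center-manifold device pays off at the $j=1$ step, where instead of tracking a variational ODE carrying a neutral direction, the paper simply augments the system to $(U,\partial_\varepsilon U)$, notes that the rest points now form a two-dimensional manifold matching the two-dimensional center subspace, and repeats the equilibrium-dimension count verbatim.

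One correctable slip: your handling of the cross-couplings at $U_-^\varepsilon$ via ``openness of the hyperbolic spectrum'' perturbing from $q\to 0$ would only justify hyperbolicity for small $q$. The sharper observation (which the paper records as \eqref{lin-tw-ode}) is that $z_-^\varepsilon = 0$ forces $\partial_{(u,E)}\big(k\phi(T)z\big)\big|_{U_-^\varepsilon} = 0$, so the full linearization is block upper-triangular and its spectrum is \emph{exactly} the union of the spectra of the fluid and reactive blocks---no perturbation argument is needed, and the conclusion holds for all $q$.
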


In particular, $|(\bar U^\e)'(x)| \le Ce^{-\eta_0|x|},$ for all $x.$  
\begin{rem} \label{trans} In the ZND limit, strong detonations are transverse orbits of \eqref{tw-ode}, a result proved in Section 3.6 of {\rm \cite{LyZ2}}, 
following {\rm \cite{GS1}.}
\end{rem} 

 Lemma \ref{ex} and Corollary \ref{profdecay} are proved in Section \ref{sec:strong}.

\subsection{Structure of the equations and the profiles} \label{remarkson}
  
 System \eqref{rNS-symb}, seen as a system in $\t,w,$ satisfies
\begin{itemize}
\item[{\rm (A1)}] the convection terms in the equation in $\t$ are linear in $(\t,w);$ 
\item[{\rm (A2)}] the diffusion matrix $b$ is positive definite. 
\end{itemize}

 For strong detonation waves, the convection terms in \eqref{rNS} satisfy
  \begin{itemize}
  \item[{\rm (H1)}] The convection coefficient $s(\e)$ in the evolution equation in $\t$ is nonzero, uniformly in $\e.$
  \item[{\rm (H2)}] The spectrum of $\d_U F,$ given in \eqref{char}, is real, simple, and nonzero, uniformly in $\e.$
  \end{itemize}

  System \eqref{rNS-symb} satisfies the Kawashima dissipativity condition
 \begin{itemize}
 \item[{\rm (H3)}] For all $\e,$ for all $\xi \in \R,$
  $$ \qquad \Re \s\big(i \xi \d_U F(U^\e_\pm) - \xi^2 B(U^\e_\pm) + \d_U G(U^\e_\pm)\big) \leq - \frac{\theta\xi^2}{1+  \xi^2},$$
 \end{itemize}
  at the endstates $U^\e_\pm$ of a family of strong detonations. In (H3), $\s$ denotes spectrum of a matrix, and $\theta >0$ is independent of $\xi$ and $\e.$  To verify {\rm (H3)}, it suffices, by a classical result of \cite{ShK}, to check that \eqref{rNS-symb} has a symmetrizable hyperbolic-parabolic structure, and that the genuine coupling condition holds. These conditions are coordinates-independent, and easily checked in $(\t,u,e)$ coordinates.

 Finally, the assumption
 
 \begin{itemize}
 \item[{\rm (H4)}] Considered as connecting orbits of \eqref{tw-ode}, $\bar U^\e$ lie in a smooth one-dimensional manifold of solutions of \eqref{tw-ode}, obtained as a transversal intersection of the unstable manifold at $U^\e_-$ and the stable manifold at $U^\e_+,$ 
 \end{itemize}
 
\noindent  holds in the ZND limit, as stated in Remark \ref{trans}. Under (H4), in a vicinity of $\bar U^\e,$ the set of stationary solutions of \eqref{rNS-symb} with limits $U^\e_\pm$ at $\pm \infty$ is a smooth one-dimensional manifold, given by $\{ \bar U^\e(\cdot - c), \quad c \in \R\},$ and the associated speed $\e \to s(\e)$ is smooth.
 
 Conditions (A1)-(A2),(H0)-(H4) are the assumptions of \cite{TZ3} (where $G \equiv 0$), themselves a strengthened version of the assumptions of \cite{MaZ3}.
\subsection{The Evans function}\label{evans}

A central object in the study of stability of traveling
waves is the {\it Evans function} $D(\e,\cdot)$ (precisely defined in Section \ref{sec:Evans}), 
 a Wronskian
of solutions of the eigenvalue equation $(L(\e)-\lambda)U=0$
decaying at plus or minus spatial infinity \cite{AGJ}\footnote{
For applications of the Evans function to stability of viscous shock and detonation
waves, see, e.g., \cite{AGJ, GZ, ZS, ZKochel, LyZ1, LyZ2, LRTZ}.},
where the linearized operator $L$ is defined as
 \begin{equation} \label{def-l}
 L(\e)  := - \d_x(A \, \cdot) +  \d_x (B(\bar U^\e) \d_x \,\cdot) + \d_U G(\bar U^\e),
 \end{equation} 
 with the notation
 \begin{equation} \label{A}
 A := - \d_U F(\bar U^\e) + (\d_U B(\bar U^\e) \,\cdot) (\bar U^\e)'.
 \end{equation}

Recall the important result of \cite{LyZ2}:

\begin{prop} [\cite{LyZ2}, Theorem 4] \label{Lyprop}
Under Assumptions {\rm \ref{idealgas}} and {\rm \ref{ignition}}, let $\{\bar U^\e\}_\e$ be a one-parameter family of viscous strong detonation waves satisfying {\rm (H4).}

For all $\e,$ the associated Evans function has a zero of multiplicity
one at $\lambda=0:$ 
$$
 D(\e,0)=0, \qquad \mbox{and} \qquad D'(\e,0)\ne 0.
$$
\end{prop}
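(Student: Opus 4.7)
The plan is to combine translation invariance with the Gardner--Zumbrun strategy: translation yields $D(\e,0)=0$ directly, while simplicity of the root reduces, via the standard Wronskian computation, to a product of a Melnikov--type transversality coefficient and a jump determinant at the asymptotic states, both of which are nonzero under the present hypotheses.

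For $D(\e,0)=0$, I would differentiate the stationary equation \eqref{rNS-symb} in $x$. Since $\bar U^\e$ is a standing solution, $(\bar U^\e)'$ belongs to the kernel of $L(\e)$. By Corollary \ref{profdecay}, $(\bar U^\e)'$ decays exponentially at both $\pm\infty$. Lifting $(L(\e)-\lambda)U=0$ to the first-order system $W'=\mathbb{A}(x,\lambda)W$, the vector-valued profile derivative lies simultaneously in the decaying subspace at $+\infty$ and in the decaying subspace at $-\infty$ at $\lambda=0$. Hence the Wronskian defining $D(\e,\cdot)$ has linearly dependent columns at the origin, yielding the vanishing.

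For $D'(\e,0)\ne 0$, I would adapt the Gardner--Zumbrun--Lyng--Zumbrun reduction expressing the derivative as a product
$$ D'(\e,0)= C\,\gamma\,\Delta, $$
where $\gamma$ is a transversality coefficient measuring the intersection angle of the unstable manifold at $U^\e_-$ with the stable manifold at $U^\e_+$ under the traveling-wave flow \eqref{tw-ode}, and $\Delta$ is a jump determinant built from the asymptotic linearizations $\d_U F(U^\e_\pm)$, $B(U^\e_\pm)$, $\d_U G(U^\e_\pm)$ together with derivatives of the endstates within the family. Nonvanishing of $\gamma$ is precisely the content of (H4). Nonvanishing of $\Delta$ follows, after exploiting the block-diagonal form of $B$ in $(\t,w)$ coordinates and the characteristic decomposition \eqref{char}, from the strict Lax inequalities \eqref{Lax} on the fluid modes together with the nonzero convection rate $-s(\e)$ on the reactive mode (H1).

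The main technical obstacle will be ruling out a spurious second kernel element contributed by the reactive variable $z$, which would both make $D(\e,0)$ a higher-order zero and destroy the factorization above. Here one exploits the asymmetric ignition structure of strong detonations: at $+\infty$, \eqref{ignition-right} and \eqref{ignition-right-prime} eliminate the reaction source from the linearization; at $-\infty$, the condition $z^\e_-\equiv 0$ combined with $T^\e_->T_i$ makes the linearized reaction act only on a direction that is already exponentially decaying. A careful dimension count of the stable and unstable subspaces of $\mathbb{A}(\pm\infty,0)$ in the full reactive system, together with Kawashima dissipativity (H3), then shows $\dim\ker L(\e)=1$, so that $(\bar U^\e)'$ accounts for the entirety of the zero at $\lambda=0$ and the formula above gives $D'(\e,0)\ne 0$.
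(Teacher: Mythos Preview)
Your overall strategy matches the paper's: $D(\e,0)=0$ by translation invariance, and $D'(\e,0)=\gamma\,\delta$ (in the paper's notation) with $\gamma\ne 0$ by (H4). The issue lies in the second factor and in your final paragraph.

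First, your description of $\Delta$ is off target. The paper's $\delta$ is the \emph{Lopatinski determinant}
\[
\delta=\det\big(r_1^-,\,r_2^-,\,r_4^-,\,(\tau_+-\tau_-,\,u_+-u_-,\,E_+-E_-)^{\tt tr}\big),
\]
built from the outgoing eigenvectors of $\partial_U F(U_-^\e)$ and the jump in the endstates; the diffusion matrices $B(U_\pm^\e)$, the reaction linearization $\partial_U G(U_\pm^\e)$, and ``derivatives of the endstates within the family'' play no role. More importantly, your proposed justification for $\Delta\ne 0$ does not work. The Lax inequalities \eqref{Lax} and the nonzero convection $-s(\e)$ merely guarantee the correct \emph{count} of outgoing modes; they do not prevent the jump vector from lying in their span. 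Nonvanishing of $\delta$ is an honest additional condition. The paper obtains it by an explicit computation reducing $\delta$ to the Lopatinski determinant of the limiting nonreactive gas-dynamical shock, which is known to be nonzero. You would need a similar direct calculation.

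Second, your last paragraph is both unnecessary and logically backwards. The identity $D'(\e,0)=\gamma\,\delta$ is established in \cite{ZKochel,LyZ1,LyZ2} without first proving $\dim\ker L(\e)=1$; once the identity holds and both factors are nonzero, simplicity of the zero of $D$ is immediate. Conversely, knowing $\dim\ker L(\e)=1$ bounds only the \emph{geometric} multiplicity of the eigenvalue $0$, not the order of vanishing of $D$, so it would not by itself give $D'(\e,0)\ne 0$. The dimension count you outline is therefore a detour, and its conclusion does not feed back into the argument the way you claim.
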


\begin{proof} By translational invariance, $D(\e,0) = 0,$ for all $\e.$ Generalizing similar results known for shock waves \cite{GZ, ZS},
there was established in \cite{ZKochel, LyZ1, LyZ2} the fundamental relation
\begin{equation}\label{der-Evans}
  D'(\e,0)=\gamma \delta.
  \end{equation}
 In \eqref{der-Evans}, $\gamma$ is a coefficient given as
a Wronskian of solutions of the linearized traveling-wave ODE
about $\bar U;$ transversality corresponds to $\g \neq 0.$ In \eqref{der-Evans}, $\delta$ is the Lopatinski determinant
$$
\delta:=\det\big(\begin{array}{cccc} r_1^- & r_2^- & r_4^- & (\begin{array}{ccc}\tau_+ -\t_- & u_+ - u_- & 
E_+ - E_- \end{array})^{\tt{tr}}\end{array}\big),
$$
(where $r_j^-$ denote the eigenvectors of
$\d_U F(U^\e_-)$ associated with outgoing
eigenvalues, $F$ as in \eqref{rNS-symb}, and ${\tt tr}$ denotes transverse matrix\footnote{This notation will be used throughout the article.}) determining hyperbolic stability of the Chapman--Jouget (square wave)
approximation modeling the detonation as a shock discontinuity. Hyperbolic stability corresponds to $\delta\ne 0.$
See \cite{ZKochel, LyZ1, LyZ2, JLW} for further discussion.
By (H4), $\g \neq 0,$ while $\delta\ne 0$ by direct
calculation comparing to the nonreactive (shock-wave) case.
\end{proof}

\begin{rem} The vectors $r_1^-,$ $r_2^-$ and $r_4^-$ correspond to outgoing modes to the left of $x =0,$ see Section {\rm \ref{sec:low-f}} and Figure {\rm \ref{fig4}}. (The fluid modes $r_j^-,$ $1 \leq j \leq 3,$ are ordered as usual by increasing characteristic speeds: $- s -\s_- < -s < 0 < - s + \s_-,$ so that $r_3^-$ is incoming.) 
\end{rem}

\subsection{Results}\label{results}

Let $X$ and $Y$ be two Banach spaces, and consider a traveling wave $\bar U$ solution of a general evolution equation.

\begin{defi} A traveling wave $\bar U$
is said to be $X \to Y$ \emph{linearly orbitally stable}
if, for any solution $\tilde U$ of the linearized equations
about $\bar U$ with initial data in $X,$ there exists a phase shift $\delta,$ 
such that $|\tilde U(\cdot,t)- \delta(t)\bar U'(\cdot)|_Y$ 
is bounded for $0\le t \le \infty.$ 

It is said to be $X \to Y$ \emph{linearly asymptotically orbitally stable} if it is $X \to Y$ linearly orbitally stable and if moreover
$|\tilde U(\cdot,t)- \delta(t)\bar U'(\cdot)|_Y\to 0$ as $t\to \infty$.
\end{defi}

\begin{defi} A traveling wave $\bar U$ is said to be $X \to Y$ \emph{nonlinearly orbitally stable } if,
for each $\delta>0$,  
for any solution $\tilde U$ of the nonlinear equations with
$|\tilde U(\cdot, 0)-\bar U|_X$ sufficiently small, there exists a phase shift $\delta,$ 
such that
$|\tilde U(\cdot,t)- \bar U(\cdot-\delta(t), t)|_Y\le \delta$ 
for $0\le t\le \infty.$

It is said to be $X \to Y$ \emph{nonlinearly asymptotically orbitally stable}
if it is $X \to Y$ nonlinearly orbitally stable and if moreover
$|\tilde U(\cdot,t) - \bar U(\cdot-\delta(t), t)|_Y\to 0$ as $t\to \infty$.
\end{defi}

\subsubsection{Stability}
Our first result, generalizing that of 
\cite{LRTZ} in the artificial viscosity case, is
a characterization of
linearized 
stability 
and a sufficient condition for nonlinear stability,
in terms of an Evans function
 condition.

\begin{theo}\label{stab}
Under Assumptions {\rm \ref{idealgas},} {\rm \ref{ignition},} let $\{\bar U^\e\}_\e$ be a one-parameter family of viscous strong detonation waves.

For all $\e,$ $\bar U^\e$ 
is $L^1\cap L^p\to L^p$ linearly orbitally stable if and only if, for all $\e,$
 \begin{equation} \label{D}
\mbox{the only zero of $D(\e,\cdot)$ in $\Re \l \ge 0$ is a simple zero at the origin.}
\end{equation}
If \eqref{D} holds, $\bar U^\e$ is $L^1 \cap H^3 \to L^1 \cap H^3$ linearly and nonlinearly orbitally stable, 
and $L^1\cap H^3\to L^p\cap H^3$ asymptotically orbitally stable, for $p > 1,$  
with 
\be\label{nonest}
|\tilde U^\e(\cdot, t)-\bar U^\e(\cdot -\delta(t))|_{L^p}
\le C|\tilde U^\e_0-\bar U^\e|_{L^1\cap H^3}(1+t)^{- \frac{1}{2}(1-\frac{1}{p})},\\
\ee
where $\tilde U^\e$ is the solution of \eqref{rNS-symb} issued from $\tilde U_0^\e,$ for some $\delta(\cdot)$ satisfying
\ba \nonumber
|\delta(t)| &\le C|\tilde U^\e_0-\bar U^\e|_{L^1\cap H^3},\\
|\dot \delta(t)| &\le C|\tilde U^\e_0-\bar U^\e|_{L^1\cap H^3}(1+t)^{-\frac{1}{2}}.
\nonumber \ea
\end{theo}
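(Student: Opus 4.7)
The plan is to follow the pointwise semigroup / Evans function framework developed in \cite{ZH, MaZ3, LRTZ, TZ3}, adapting the artificial-viscosity analysis of \cite{LRTZ} to the physical viscosity setting with the structural assumptions (A1)--(A2), (H1)--(H4). The starting point is the inverse Laplace transform representation of the linearized semigroup $e^{L(\e)t}$ as a contour integral of the resolvent $(L(\e)-\lambda)^{-1}$, whose Schwartz kernel $G_\lambda(\e,x,y)$ is constructed as a Wronskian quotient from solutions of the eigenvalue ODE $(L(\e)-\lambda)U=0$ that decay at $\pm\infty$, with denominator given precisely by the Evans function $D(\e,\lambda)$.

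First I would establish pointwise bounds on $G_\lambda$ in three frequency regimes. The low-frequency bounds come from Taylor expansion around $\lambda=0$ together with Proposition \ref{Lyprop}, which ensures $D(\e,\lambda)=D'(\e,0)\lambda+O(\lambda^2)$ with $D'(\e,0)\ne 0$, so that the simple pole of $G_\lambda$ at $\lambda=0$ contributes exactly one translational eigenmode. The high-frequency bounds follow from Kawashima-type parabolic tracking estimates, available via (A2) and the genuine coupling condition (H3). Mid-frequency bounds follow from the hypothesis \eqref{D} together with compactness, which excludes zeros of $D(\e,\cdot)$ in any bounded subset of $\{\Re\lambda\ge 0\}\setminus\{0\}$, hence on a neighborhood thereof. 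Combining these via contour deformation yields a decomposition of the time-domain Green function,
\be
G(\e,x,t;y)=(\bar U^\e)'(x)\, e(y,t)+\tilde G(\e,x,t;y),
\ee
where $e(y,t)$ is the scalar excitation function governing the translational mode and $\tilde G$ satisfies Gaussian/convected heat-kernel bounds along each outgoing characteristic direction in \eqref{char}, exactly as in \cite{MaZ3, LRTZ}.

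From these pointwise bounds, the $L^1\cap L^p\to L^p$ linearized orbital stability and the $t^{-\frac{1}{2}(1-1/p)}$ decay rate follow by Young's convolution inequality applied to Duhamel's formula, with $\delta(t)$ defined via the projection $\int e(y,t)\tilde U^\e_0(y)\,dy$ so that $\tilde U^\e(\cdot,t)-\delta(t)(\bar U^\e)'$ is controlled entirely by $\tilde G$; the required bounds on $\delta$ and $\dot\delta$ follow directly from the estimates on $e$ and $\d_t e$. Necessity in the linearized equivalence is standard: an unstable zero of $D$, or a non-simple zero at the origin, would generate an $L^p$-unbounded mode in the semigroup, contradicting orbital stability.

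The nonlinear argument proceeds by the modulation ansatz $\tilde U^\e(x,t)=\bar U^\e(x-\delta(t))+V(x,t)$, which produces coupled equations $V_t-L(\e)V=N(V,\d_x V,\dot\delta)$ together with a scalar ODE for $\dot\delta$ obtained by projecting the nonlinear residual against the translational left-eigenmode. One then closes a Duhamel iteration in $L^1\cap H^3$, using the above Green function decomposition to get the temporal decay, while $H^3$ regularity is maintained by standard Kawashima-type nonlinear energy estimates exploiting (A1)--(A2) and (H3). The main obstacle is the low-frequency analysis: one must carefully track the reactive eigenmode (the extra $-s$ characteristic in \eqref{char}) and verify that the conservative variable $v$, by \eqref{cons}, produces diffusive $t^{-1/2}$ decay, while the non-conservative reactive variable $z$ decays exponentially through (H3) together with \eqref{ignition-right-prime}, which guarantees that the reaction linearization vanishes at the unburned endstate and hence contributes no additional slow modes.
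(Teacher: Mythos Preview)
Your overall strategy matches the paper's: pointwise resolvent bounds, Green function decomposition with an excited translational piece, linear $L^p$ decay via convolution, and a nonlinear closure combining Duhamel with a Kawashima-type $H^3$ energy estimate (the paper's Lemma \ref{aux}). The linear equivalence and the modulation ansatz are exactly as you describe.

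However, your handling of the reactive nonlinearity contains a genuine gap. The claim that ``the non-conservative reactive variable $z$ decays exponentially through (H3) together with \eqref{ignition-right-prime}'' and ``contributes no additional slow modes'' is not correct: the reactive mode $\mu_4^+$ \emph{is} a slow mode at $+\infty$ (see \eqref{DAS-r-slow-4+}), and the perturbation $z$ does not decay exponentially in time. The obstacle is that the reactive source $Q_r$ is \emph{not} in divergence form, so a naive Duhamel estimate loses a factor $t^{-1/2}$ and the iteration does not close. The paper's resolution is structural, not dissipative: because $\phi\equiv 0$ below $T_i$ and $T_+<T_i$, the nonlinear reactive term factors as $Q_r(U)=\ell_4^+\,e^{-\eta_0 x^+}q_r(U)$ with $|q_r|\le C|U|^2$ (Lemma \ref{r-term}); that is, it lies in the single reactive direction $\ell_4^+$ and is exponentially localized for $x>0$. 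This is paired with two Green-function facts: the excited term satisfies ${\cal E}\,\ell_4^+\equiv 0$ (Corollary \ref{e-r}, derived from the transmission-coefficient identities \eqref{trans-coeff} via conservation of mass \eqref{cons}), and the scattered/residual kernels against $\ell_4^+$ with weight $e^{-\eta y^+}$ decay exponentially in $t$ (Corollary \ref{s-r}). Together these recover the missing $t^{-1/2}$, so that the $Q_r$ contribution in Duhamel matches the rate of the divergence-form term $\partial_x Q_f$. Your invocation of \eqref{ignition-right-prime} is relevant to the block-triangular structure of $A_\pm,B_\pm,G_\pm$ used in the linear resolvent analysis, but it is not the mechanism that closes the nonlinear estimate; without the spatial localization of $Q_r$ and the orthogonality ${\cal E}\,\ell_4^+\equiv 0$, the argument you outline would stall at the Duhamel step.
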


\br\label{smallII}
It is shown in \cite{LyZ2} that in the small heat-release
limit $q\to 0$, strong detonations are Evans stable if and only if
the limiting gas-dynamical profile (see Remark \ref{smallq}) 
is Evans stable: in particular, for shock (or equivalently detonation) 
amplitude sufficiently small \cite{HuZ2}.
\er

\begin{cor}\label{rigstab}
Under Assumptions {\rm \ref{idealgas},} {\rm \ref{ignition},} 
strong detonation profiles are linearly and nonlinearly orbitally stable 
(in the strong sense of \eqref{nonest}) in the limit as amplitude $|U_+-U_-|$
(hence also heat release $q$) goes to zero, with $U_-$ (or $U_+$) held fixed.
\end{cor}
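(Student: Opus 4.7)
The plan is to reduce the corollary to a direct application of Theorem \ref{stab}, using the already-cited results packaged in Remarks \ref{smallq} and \ref{smallII}. Since Theorem \ref{stab} gives linear and nonlinear orbital stability with decay rate \eqref{nonest} as soon as the Evans condition \eqref{D} is verified, the entire content of the corollary is the verification of \eqref{D} for the family of detonations with amplitude tending to zero.

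First, I would observe that the Rankine--Hugoniot relations \eqref{RH} force the heat release $q$ to vanish together with the fluid-amplitude $|v_+ - v_-|$ when $U_-$ is held fixed: inspecting the last components of \eqref{RH} and the decomposition $E = e + u^2/2 + qz$, the jump in internal energy across the wave is balanced against $q(z_+ - z_-) = q$, so small fluid amplitude compatible with $U_-$ fixed, $z_- = 0$, $z_+ = 1$ is only possible when $q \to 0$. Thus the small-amplitude limit is contained in the small-heat-release regime.

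Next, by Remark \ref{smallq}, in the limit $q \to 0$ the traveling-wave ODE \eqref{tw-ode} decouples and the fluid part of $\bar U^\eps$ converges (together with its speed $s(\eps)$) to an ordinary nonreacting gas-dynamical shock of Lax type, whose existence is guaranteed by Gilbarg. By Remark \ref{smallII}, in this limit the strong detonation is Evans stable if and only if the limiting gas-dynamical shock is Evans stable, and the latter holds for sufficiently small shock amplitude by the result \cite{HuZ2}. Since the limiting gas-dynamical shock here is precisely the small-amplitude shock joining the fluid components of $U_-$ and $U_+$, Evans stability of $\bar U^\eps$ in the form \eqref{D} follows once $|U_+ - U_-|$ is small enough.

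With \eqref{D} in hand, Theorem \ref{stab} applies directly and delivers both the linear and nonlinear orbital stability assertions, as well as the decay estimate \eqref{nonest}. The only delicate point in turning this sketch into a full proof is verifying that the convergence of $\bar U^\eps$ to the gas-dynamical shock profile as amplitude vanishes is uniform enough to transfer the Evans-function nonvanishing result of \cite{HuZ2} from the shock to the detonation on a common half-plane $\Re \lambda \ge 0$; this is exactly the content of the reduction carried out in \cite{LyZ2} that underlies Remark \ref{smallII}, so invoking that reduction is the main (and only) substantive step.
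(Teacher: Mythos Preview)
Your proposal is correct and follows essentially the same route as the paper: the corollary is stated immediately after Remark \ref{smallII} and is treated as a direct consequence of Theorem \ref{stab} together with the Evans-stability reduction in \cite{LyZ2} and the small-amplitude shock result \cite{HuZ2}, which is exactly the chain of implications you spell out. Your additional observation that the Rankine--Hugoniot relations force $q\to 0$ as the fluid amplitude vanishes makes explicit the parenthetical ``(hence also heat release $q$)'' in the statement, and your closing remark correctly identifies that the only nontrivial ingredient is the \cite{LyZ2} reduction already packaged in Remark \ref{smallII}.
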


Corollary \ref{rigstab} is notable as the first complete nonlinear stability
result for strong detonations of the reacting Navier--Stokes equations.
The only previous result on this topic, a partial stability result
applying to zero mass (i.e., total integral) perturbations, was obtained
by Tan and Tesei under similar, but 
more restrictive assumptions (in particular, 
for nonphysical Heaviside-type ignition function) in 1997. 

\subsubsection{Transition from stability to instability} \label{sec:transition}

\begin{theo}\label{spectralPH} Under Assumptions {\rm \ref{idealgas},} {\rm \ref{ignition},} let $\{\bar U^\e\}_\e$ be a one-parameter family of viscous strong detonation waves  satisfying {\rm (H4)}. 

Assume that the family of equations \eqref{rNS-symb} and profiles $\bar U^\e$ undergoes transition to instability
at $\eps=0$ in the sense that 
$\bar U^\eps$ is linearly stable for $\eps<0$ and linearly unstable for $\eps>0$.

Then, one or more pair of nonzero complex conjugate eigenvalues of
$L(\e)$ move from the stable (negative real part)
to the neutral or unstable (nonnegative real part) half-plane as $\eps$ passes from
negative to positive through $\eps=0$, while $\lambda=0$ remains
a simple root of $D(\eps,\cdot)$ for all $\e.$
\end{theo}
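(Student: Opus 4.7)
The strategy is to combine the Evans-function stability criterion of Theorem \ref{stab}, the persistent simple root from Proposition \ref{Lyprop}, the real-coefficient symmetry of $L(\e)$, and a compactness argument preventing escape of unstable eigenvalues to infinity. Proposition \ref{Lyprop} gives $D(\e,0)=0$ and $D'(\e,0)\ne 0$ for every $\e$, so $\lambda=0$ is a simple zero of $D(\e,\cdot)$ for all $\e$, which proves the last assertion of the theorem. Theorem \ref{stab} then characterizes linear stability as the absence of zeros of $D(\e,\cdot)$ in $\Re\lambda\ge 0$ other than the simple one at the origin; the transition hypothesis therefore supplies, for each small $\e>0$, a zero $\lambda_*(\e)$ with $\Re\lambda_*(\e)>0$ and $\lambda_*(\e)\ne 0$, while no such zero exists for $\e<0$.

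I would next invoke the high-frequency bounds on the Evans function from \cite{MaZ3, LyZ2}, which, via the Kawashima condition (H3) and the exponential profile decay of Corollary \ref{profdecay}, furnish constants $\Lambda,\eta>0$ uniform in small $\e$ such that $D(\e,\lambda)\ne 0$ whenever $|\lambda|\ge \Lambda$ and $\Re\lambda\ge -\eta$. Consequently every unstable zero $\lambda_*(\e)$ lies in the compact set $K=\{|\lambda|\le \Lambda,\ \Re\lambda\ge 0\}$. Since $D$ is analytic in $\lambda$ and jointly continuous in $(\e,\lambda)$, Hurwitz's theorem applied to contours in $K$ shows that zeros depend continuously on $\e$. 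Any accumulation point of $\lambda_*(\e)$ as $\e\to 0^+$ must be a zero of $D(0,\cdot)$ in $\{\Re\lambda\ge 0\}$; moreover no such accumulation point can have $\Re\lambda>0$, since Hurwitz would then produce a zero of $D(\e,\cdot)$ in $\{\Re\lambda>0\}$ for $\e<0$ sufficiently close to $0$, contradicting stability.

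Hence the accumulation points of $\lambda_*(\e)$ lie on the imaginary axis. Writing $D(\e,\lambda)=\lambda\,\tilde D(\e,\lambda)$ with $\tilde D(0,0)=D'(0,0)\ne 0$, a fixed neighborhood of $\lambda=0$ contains no zero of $D(\e,\cdot)$ other than $\lambda=0$ itself for all small $\e$, so the accumulation points are purely imaginary and nonzero. Since $\bar U^\e$ is real-valued, $D(\e,\bar\lambda)=\overline{D(\e,\lambda)}$, and these zeros come in complex conjugate pairs $\pm i\beta(\e)+o(1)$ with $\beta(\e)$ bounded away from $0$. By continuity, the corresponding pair sits in $\{\Re\lambda<0\}$ for $\e<0$ by stability, crosses the imaginary axis at $\e=0$, and enters $\{\Re\lambda>0\}$ for $\e>0$ by instability, yielding the desired one-or-more crossing pairs.

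The main obstacle is the uniform high-frequency bound of the second paragraph, which is what forces the crossing to occur at a bounded location and, together with the simple-root persistence, away from the origin. This is where the specific parabolic and relaxation structure of the reacting Navier--Stokes system enters the argument; all remaining steps are standard consequences of the argument principle, Hurwitz's theorem, the implicit function theorem applied to the factorization at $\lambda=0$, and the real-coefficient symmetry of the Evans function.
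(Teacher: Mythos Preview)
Your proposal is correct and follows essentially the same approach as the paper: combine the Evans-function characterization of linear stability from Theorem \ref{stab} with the persistent simple root at $\lambda=0$ from Proposition \ref{Lyprop}, then conclude via real-coefficient symmetry that any crossing occurs through a nonzero conjugate pair. The paper's proof is three sentences long and leaves implicit the high-frequency bound, the Hurwitz/continuity argument, and the local factorization $D(\e,\lambda)=\lambda\tilde D(\e,\lambda)$ that you spell out; these additions make your version more complete but do not change the underlying strategy.
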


That is, transition to instability is associated with a Hopf-type
bifurcation in the spectral configuration of the linearized operator about
the wave.

\begin{proof}[Proof of Theorem {\rm \ref{spectralPH}.}]
By Theorem \ref{stab}, transition from stability to
instability must occur through the passage of a root
of the Evans function from the stable half-plane to the neutral or unstable half-plane.
However, 
Proposition \ref{Lyprop} implies that $D$ has a zero of multiplicity one at the origin,
for all 
$\e,$
and so no root can
pass through the origin.
It follows that transition to instability, if it occurs, must
occur through the passage of one or more nonzero complex conjugate
pairs $\lambda=\g \pm i\tau$, $\tau\ne 0$, from the stable half-plane ($\g < 0$ for $\e < 0$) to the neutral or unstable half-plane ($\g \geq 0$ for $\e \geq 0$).
\end{proof}

Our third result and the main object of this paper
is to establish, under appropriate nondegeneracy
conditions, that the spectral Hopf bifurcation configuration described
in Theorem \ref{spectralPH} is realized at the nonlinear level as a 
genuine bifurcation to time-periodic solutions.

 Given $k \in \N$ and a weight function $\o > 0,$ define the Sobolev space and associated norm
 \begin{equation} \label{wS} H^k_\o := \{ f \in {\cal S}'(\R), \, \o^{\frac{1}{2}} f \in H^k(\R)\}, \qquad \| f \|_{H^k_\o} := \| \o^{\frac{1}{2}} f \|_{H^k}. \end{equation}
 
 Let $\o \in C^2$ be a growing weight function such that, for some $\theta_0 > 0,$ $C > 0,$ for all $x,y,$
 \begin{equation} \left\{ \label{ass-o} \begin{aligned} 
  1 \leq \o(x) & \leq e^{\theta_0 (1 + |x|^2)^\frac{1}{2}}, \\  |\o'(x)| + |\o''(x)| & \leq C \o(x), \\
   \o(x) & \leq C \o(x-y) \o(y). \end{aligned} \right.
  \end{equation}

\begin{theo}\label{PH}
Under Assumptions {\rm \ref{idealgas},} {\rm \ref{ignition},} let $\{\bar U^\e\}_\e$ be a family of viscous strong detonation waves  satisfying {\rm (H4)}. 

Assume that the family of equations \eqref{rNS-symb} and profiles $\bar U^\e$ undergoes transition from linear stability to linear instability at $\e = 0.$

Moreover, assume that this transition is associated with
passage of a single complex
conjugate pair of eigenvalues of $L(\e),$ $\lambda_\pm (\eps)=\gamma(\eps)+i\tau(\eps)$
through the imaginary axis, satisfying
\be\label{nondeg}
\gamma(0)=0, \quad \tau(0)\ne0, \quad d\gamma/d\eps(0)\ne 0.
\ee

Then, given a growing weight $\o$ satisfying \eqref{ass-o} with $\theta_0$ sufficiently small, for $r \ge 0$ sufficiently small and $C>0$ sufficiently large,
there are $C^1$ functions
$r \to \e(r)$, $r \to T(r),$ with $\e(0)=0$, $T(0)=2\pi/\tau(0),$
and a $C^1$ family of time-periodic solutions  $\tilde U^{r}(x,t)$
of \eqref{rNS-symb} with $\e=\e(r)$, of period $T(r)$,
with
\be\label{expdecaybd}
C^{-1}r \le \|\tilde U^r-\bar U^\eps\|_{H^2_\o}\le Cr,
\ee
Up to translation in $x$, $t$,
these are the only time-periodic solutions nearby in $\|\cdot\|_{H^2_\o}$
with period $T\in [T_0, T_1]$ for any fixed $0<T_0<T_1<+\infty$.
\end{theo}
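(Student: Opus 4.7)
The plan is to recast the search for nearby time-periodic solutions as a nonlinear fixed-point problem, perform a Lyapunov--Schmidt reduction onto the critical Hopf subspace in an exponentially weighted phase space, and apply the standard Crandall--Rabinowitz / Hopf lemma to the resulting finite-dimensional bifurcation equation, following the blueprint developed for viscous shocks in \cite{TZ1,TZ2,TZ3}. Concretely, I would write $\tilde U(x,t) = \bar U^\eps(x) + V(x,t)$ and rescale time by the unknown period $T$ so that $V$ is $2\pi$-periodic in $s = (2\pi/T)t$; \eqref{rNS-symb} then becomes
\be
\CalF(V,\eps,T) := -(2\pi/T)\d_s V + L(\eps) V + \CalN(V,\eps) = 0,
\ee
with $L(\eps)$ as in \eqref{def-l} and $\CalN$ collecting the quadratic-and-higher terms. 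The phase space would be $2\pi$-periodic functions valued in $H^2_\omega$, with $\omega$ from \eqref{ass-o} and $\theta_0$ smaller than both the profile decay rate $\eta_0$ of Corollary \ref{profdecay} and the Kawashima rate supplied by (H3); conjugating $L(\eps)$ by $\omega^{1/2}$, the endstate symbols become strictly stable for $\Re\lambda \ge -\eta$ with some $\eta > 0$, so the essential spectrum is pushed off the imaginary axis while the translational zero of Proposition \ref{Lyprop} and the Hopf pair $\lambda_\pm(\eps)$ remain in place as point spectrum.

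Next I would set up spectral separation. Near the imaginary axis lie only $\lambda = 0$ and $\lambda_\pm(\eps)$, giving a three-dimensional total projection $\Pi(\eps)$, smooth in $\eps$. Lifted to $2\pi$-periodic functions, $\CalL(\eps,T) := -(2\pi/T)\d_s + L(\eps)$ is to be shown boundedly invertible on the range of $I-\Pi(\eps)$ (equivalently, on the time-Fourier modes whose Floquet factor is not near $1$) for $(\eps,T)$ near $(0,2\pi/\tau(0))$, by a Fourier-in-$s$ argument combined with sectorial resolvent bounds in the weighted space. The implicit function theorem then solves for the transverse part of $V$ as a $C^1$ function of the three critical coordinates together with $(\eps,T)$.

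The remaining finite-dimensional bifurcation equation is reduced by absorbing the continuous symmetries: the $x$-translation symmetry kills the $\bar U'$ direction through a phase condition, and the $S^1$ time-translation action on periodic orbits allows the Hopf component to be parametrized by a single nonnegative amplitude $r$. This leaves two real scalar equations for $(\eps,T)$, with Jacobian at $r = 0$ governed, after normalization, by $(d\gamma/d\eps(0),\tau(0))$. Nondegeneracy \eqref{nondeg} and $\tau(0)\ne 0$ then yield, via the standard Hopf lemma as in \cite{TZ2,TZ3}, $C^1$ curves $r \mapsto \eps(r)$ and $r \mapsto T(r)$ with $\eps(0) = 0$, $T(0) = 2\pi/\tau(0)$, and a $C^1$ family $\tilde U^r$ of time-periodic solutions. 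The bound \eqref{expdecaybd} comes from the linear amplitude parametrization on the critical subspace, and uniqueness up to space-time translations within $\|\cdot\|_{H^2_\omega}$ at periods in $[T_0, T_1]$ is built into the Lyapunov--Schmidt construction.

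The hard part will be the functional-analytic setup. $L(\eps)$ is hyperbolic-parabolic (the $\tau$-equation has no diffusion), carries essential spectrum accumulating at $\lambda = 0$ in unweighted $L^p$ because of the conservation law in $\tau$, and has a translational zero mode that must be cleanly separated from the Hopf pair even though both sit on the imaginary axis. Establishing that $\CalL(\eps,T)$ is Fredholm of index zero on the range of $I-\Pi(\eps)$ with resolvent bounds strong enough to run the implicit function theorem is not automatic; it requires the pointwise Green-function and Evans-function machinery of \cite{MaZ3,LyZ1,LyZ2} together with its time-periodic refinement in \cite{TZ3}, which in turn exploit (A1)--(A2) and (H1)--(H4) verified earlier in the paper.
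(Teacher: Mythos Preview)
Your outline contains a genuine gap at its central step. You claim that conjugating $L(\eps)$ by $\omega^{1/2}$ pushes the essential spectrum strictly into $\{\Re\lambda < -\eta\}$, after which the transverse linearization $\CalL(\eps,T)$ becomes boundedly invertible on $\Range(I-\Pi)$ and standard Lyapunov--Schmidt applies. This is false for the weights permitted by \eqref{ass-o}. First, $\omega\equiv 1$ satisfies \eqref{ass-o}, and the theorem is asserted for that choice, so no shift occurs at all. Second, even the maximal weight $\omega=e^{\theta_0(1+|x|^2)^{1/2}}$ does not produce a gap: on the $-\infty$ side there are three outgoing characteristics (speeds $-s-\sigma_-$, $-s$, $-s$, all negative by \eqref{Lax}), and a weight growing as $x\to-\infty$ shifts the corresponding slow branches of essential spectrum in the \emph{unstable} direction (the real part at $\xi=0$ becomes $-\theta_0 a_j^- >0$). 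The Kawashima estimate (H3) gives decay $-\theta\xi^2/(1+\xi^2)$, which vanishes at $\xi=0$, so it supplies no spectral gap either. The paper states this explicitly: even in $X_1=H^2_\omega$, ``the spectrum of $(1-\Pi)L(\eps)$ is not bounded away from $\{j\pi/T\}_{j\in\ZZ}$'' and ``$\|{\bf S}(\eps,jT)\|_{{\cal L}(X_1)}$ does not decay.''

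What the paper actually does is work with the Poincar\'e/displacement map $\Delta(a,b,\eps,T)=({\bf S}(\eps,T)-\Id)b+\tilde N$ and invert $\Id-{\bf S}$ only on the \emph{range of the nonlinearity}, not on all of $X_1$. The key observation is structural: the nonlinear source has the form $\partial_x Q_f + \ell_4^+ e^{-\eta_0 x^+} q_r$ (Lemma \ref{r-term}), so $\tilde N$ lands in the smaller space $X_2=H^2_\omega\cap(\partial_x L^1_\omega\times L^1_{\omega,\eta^+})$. On $X_2$ one can show the Neumann series $\sum_{n\ge 0}{\bf S}(\eps,nT)$ converges (Lemmas \ref{hf-lem}--\ref{crux-cor}), not by operator-norm decay of ${\bf S}$ but by pointwise cancellation estimates on the Green kernel: the divergence structure yields an extra $\partial_y$ on ${\cal G}_\lambda$ that kills the pole at $\lambda=0$, and the reactive piece is handled via Corollary \ref{s-r} using the exponential localization $e^{-\eta_0 x^+}$. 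This ``inverse temporal dynamics'' step, together with the two-norm $(X_1,X_2)/({\cal B}_1,{\cal B}_2)$ bookkeeping needed to recover $C^1$ dependence on $\eps$ (Remark \ref{rem-fin}), is the heart of the proof and is precisely what your sectorial/Fourier-in-$s$ argument cannot supply in the absence of a spectral gap.
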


That is, transition to linear instability of viscous strong detonation
waves is ``generically'' (in the sense of \eqref{nondeg})
associated with Hopf bifurcation to time-periodic galloping solutions,
as asserted in the title of this paper.

The choices $\o \equiv 1$ and $\o = e^{\theta_0 (1 + |x|^2)^\frac{1}{2}}$ are allowed in \eqref{ass-o}, as well as $\o = (1 + |x|^2)^p,$ for any real $p > 0.$ In Theorem \ref{PH}, we need, in particular, $\theta_0 < \eta_0,$ where $\eta_0$ is as in Corollary \ref{profdecay}, so that the spatial localization given by \eqref{expdecaybd} is less precise than the spatial localization of the background profile $\bar U^\e.$ The smallness condition on $\theta_0$ is described in Remark \ref{rem-o}.

\subsubsection{Nonlinear instability}
We complete our discussion with the following straightforward
result verifying that the exchange of linear stability
described in Theorem \ref{PH}, as expected, 
corresponds to an exchange of nonlinear stability as well, the
new assertion being nonlinear {\it instability} for $\eps>0$.

\begin{theo}\label{instab}
Under the assumptions of Theorem {\rm \ref{PH},} the viscous strong
detonation waves $\bar U^\eps$ undergo a transition at $\eps=0$
from nonlinear orbital stability to instability; that is,
$\bar U^\eps$ is nonlinearly orbitally stable for $\eps<0$ 
and unstable for $\eps>0$.
\end{theo}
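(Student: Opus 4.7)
The plan is to treat the two halves separately. Nonlinear orbital stability for $\eps<0$ will follow directly from Theorem \ref{stab}; nonlinear orbital instability for $\eps>0$ will be obtained by a Lyapunov-type argument of Henry / Friedlander--Strauss--Vishik type, adapted to the orbital setting of viscous detonation profiles.

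For $\eps<0$, the bifurcation hypothesis of Theorem \ref{PH} says that $\lambda_\pm(\eps)$ is the unique pair of eigenvalues of $L(\eps)$ that crosses the imaginary axis at $\eps=0$, and the nondegeneracy condition \eqref{nondeg} combined with linear stability at $\eps<0$ forces $\gamma(\eps)<0$ for $\eps<0$ small; together with Proposition \ref{Lyprop}, which keeps the root $\lambda=0$ simple for every $\eps$, this verifies \eqref{D} for $\eps<0$, and Theorem \ref{stab} then yields the claimed nonlinear orbital stability.

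For $\eps>0$, let $\phi_+$ denote an eigenfunction of $L(\eps)$ for $\lambda_+(\eps)$ and $\Pi_+$ the rank-two spectral projection onto $\Span\{\Re\phi_+,\Im\phi_+\}$. By Corollary \ref{profdecay} and standard Evans-function asymptotics the eigenfunctions $\phi_\pm$ decay exponentially at $\pm\infty$, so $\Pi_+$ is bounded on $L^1\cap H^3$. Setting $V(t):=\tilde U^\eps(\cdot,t)-\bar U^\eps$ and decomposing $V=\Pi_+V+V_s$, the perturbation equation $\d_t V=L(\eps)V+N(V)$, with $\|N(V)\|\lesssim\|V\|^2$, reduces to a two-dimensional ODE for $\Pi_+V$ with linear eigenvalues $\lambda_\pm(\eps)$, coupled via Duhamel to the complement $V_s$. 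Taking initial data $V(0)=r\,\Re\phi_+$ for small $r>0$ and bootstrapping on $[0,T_r]$ with $T_r:=\gamma(\eps)^{-1}\log(c_0/r)$, the lower bound $\|\Pi_+V(t)\|\gtrsim r\,e^{\gamma(\eps)t}$ persists while $\|V\|$ remains small, forcing $\|V(T_r)\|\gtrsim c_0$. To promote this to orbital instability, I would use the transversality $\Pi_+(\bar U^\eps)'=0$---a consequence of the isolation of the simple eigenvalue $0$ from $\lambda_\pm(\eps)$---so that any translate $\bar U^\eps(\cdot-\delta(t))$ differs from $\bar U^\eps$ by an element of $\kernel\Pi_+$ up to $O(\delta^2)$ corrections, and the escape bound on $\Pi_+V(t)$ survives minimization over $\delta(t)$.

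The main technical obstacle is the control of $V_s$ inside the Duhamel iteration: the semigroup estimates of Theorem \ref{stab} do not apply to $e^{L(\eps)t}$ itself, because $L(\eps)$ now carries the unstable point spectrum $\lambda_\pm(\eps)$. The plan is to obtain estimates on $e^{L(\eps)t}(I-\Pi_+)$ by deforming the inverse-Laplace contour so that it passes between $\lambda_\pm(\eps)$ and the remaining spectrum, picking up the two residues that precisely reconstitute $\Pi_+$; on the deformed contour, the Evans condition \eqref{D} still holds after removal of the factors isolating $\lambda_\pm(\eps)$, so the pointwise Green-function bounds of \cite{MaZ3,TZ3} underlying Theorem \ref{stab} apply with at most cosmetic changes. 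Once those bounds are in place, the bootstrap closes and the argument reduces to the by-now standard nonlinear instability template.
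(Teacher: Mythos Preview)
Your treatment of the stable side ($\eps<0$) matches the paper's: it is immediate from Theorem \ref{stab} once one observes that \eqref{D} holds.

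For $\eps>0$, your outline is correct but takes a heavier route than the paper's. The paper avoids the contour-deformation step and the full pointwise bounds on $e^{(1-\Pi_+)L(\eps)t}$ by a ratio trick: writing $r(t):=|u_1(t)|$ for the unstable amplitude and $\alpha(t):=u_2(t)/r(t)$, one gets
\[
\alpha' = \big((1-\Pi)L-\gamma\big)\alpha + O\big(\epsilon(e^{-\theta|x|}+|\alpha|+|\alpha|^2)\big),
\]
and the point is that the \emph{shifted} semigroup $e^{((1-\Pi)L-\gamma)t}$ decays exponentially in $H^1$ as soon as $e^{(1-\Pi)Lt}$ is merely polynomially bounded there---a much weaker input than the Green-function machinery you propose to redo. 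A straightforward variation-of-constants argument then keeps $\alpha$ bounded, which feeds back into $r'\ge(\gamma-C\epsilon)r$ and forces $r$ to escape any prescribed neighborhood. Your bootstrap on $[0,T_r]$ with $T_r\sim\gamma^{-1}\log(c_0/r)$ achieves the same end, but you have to work harder to control $V_s$ over an interval whose length tends to infinity as $r\to 0$; the ratio variable absorbs this difficulty automatically. On the other hand, you are more explicit than the paper about the passage from ordinary to \emph{orbital} instability via $\Pi_+(\bar U^\eps)'=0$: the paper's Section \ref{instabproof} concludes instability of $U\equiv 0$ without spelling out that step, though it follows by exactly the transversality argument you sketch.
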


\subsection{Verification of stability/bifurcation conditions}\label{verification}
The above theory not only describes the nature of possible
bifurcation/exchange of stability
but characterizes its occurrence
in terms of corresponding spectral conditions involving zeros of the
Evans function of the linearized operator about the wave.
These may readily and efficiently be computed numerically \cite{HuZ1,BHRZ},
answering in a practical sense the question of whether or not such
transitions actually occur as parameters are varied
in any given compact region.

Much more can be said in certain interesting limiting cases.
It is shown in \cite{LyZ2} that in the small heat-release
limit $q\to 0$, strong detonations are Evans stable if and only if
the limiting gas-dynamical profile (see Remark \ref{smallq}) 
is Evans stable.
As 
noted in Corollary \ref{rigstab},
this implies in particular that strong
detonations are stable in the small-amplitude limit 
as the distance between endstates goes to zero with 
one endstate held fixed (forcing $q\to 0$ as well). 
For an ideal gas law \eqref{idealeos},
stability of large-amplitude detonations in the small heat-release
limit is strongly suggested by the recent asymptotic and numerical
studies of \cite{HLZ,HLyZ} indicating that viscous ideal gas shocks
are stable for arbitrary amplitudes.

A 
more interesting limit from the viewpoint of stability transitions
is the small-viscosity, or ZND limit as $\nu$, $\kappa$, $d$ 
go to zero.
Recall, \cite{GS1,GS2}, that in this limit, the viscous detonation
profile approaches an invscid profile composed of a smooth reaction
zone preceded by a shock discontinuity.
In \cite{Z4}, it has recently been shown that strong detonations
are stable in the ZND limit if and only if both the limiting
ZND profile and the viscous shock profile associated with its
component shock discontinuity satisfy spectral Evans stability
conditions like those developed here for viscous detonations.
Since viscous shocks for ideal gas law \eqref{idealeos} as just
mentioned are uniformly stable, this means that Evans stability of
rNS profiles reduces in the small viscosity limit
to Evans stability of the limiting ZND profile.

For ZND profiles, there is a wealth of numerical 
\cite{Er1,Er2,FW,S2,KS,BMR,BM,KS} and asymptotic \cite{F1,FD,B,BN,S1,Er4}
literature indicating that stability transitions do, and do often,
occur.
Indeed, a classic benchmark problem of Fickett and Woods \cite{FW}
tests numerical code for parameters $\Gamma=1.2$, $\mathcal{E}=50$, $q=50$ for
which transition to stability is known to occur as overdrive is varied
as a bifurcation parameter \cite{BMR}.
In multidimensions, a theorem of Erpenbeck \cite{Er3} 
gives a rigorous proof of instability for certain detonation types, 
occurring through high-frequency transverse modes (the only such
proof to our knowledge).
{\it In short, the evidence is overwhelming that spectral bifurcation occurs
in the ZND context, whence (by the results of \cite{Z4}) also
for \eqref{rNS}
for $\nu$, $\kappa$, $d$ sufficiently small.}

Together with these observations, the results of this paper answer 
definitively and positively
the fundamental question whether the reacting Navier--Stokes
equations are adequate to capture the bifurcation phenomena observed
for more than half a century in physical experiments \cite{FD,Er1}.
A very interesting problem would be to establish in one
dimension a rigorous spectral instability result for ZND 
analogous to that of Erpenbeck for multi-d, thus completing an
entirely mathematical proof;
in this regard, we mention that the analyses of \cite{BN,S1} 
appear to come very close.

\subsection{Discussion and open problems}\label{discussion}

This analysis in large part concludes the one-dimensional program
set out in \cite{TZ2}.  However, a very interesting
remaining open problem is to determine
linearized and nonlinear stability of the bifurcating 
time-periodic solutions, in the spirit of Section \ref{stabproof}. For a treatment in the shock
wave case with semilinear viscosity, see \cite{BeSZ}.
Likewise, it would be very interesting to carry out a
numerical investigation of the spectrum of the linearized
operator about detonation waves with varying physical parameters, 
as done in \cite{LS,KS} in the inviscid ZND setting,
but using the viscous methods of \cite{Br1, Br2, BrZ, BDG, HuZ1}
to treat the full reacting Navier--Stokes equations, in order
to determine the physical bifurcation boundaries.

Other interesting open problems are the extension to
multi-dimensional (spinning or cellular) bifurcations, as
carried out for artificial viscosity systems in \cite{TZ2},
and to the case of weak detonations (analogous to the case
of undercompressive viscous shocks; see \cite{HZ,RZ,LRTZ}).

The strong detonation structure considerably simplifies both stability and bifurcation arguments 
over what was done in \cite{LRTZ}. We remark that, at the expense of further complication, 
nonlinear stability of general (time-independent) combustion waves, including
also weak detonations and strong or weak deflagrations, 
may be treated by a combination
of the pointwise arguments of \cite{LRTZ} and \cite{RZ}.

We remark finally that the restriction to a scalar reaction variable is for simplicity only. Indeed, the results of this article (as well as the results of the article by Lyng and Zumbrun \cite{LyZ2} from which it draws) are independent 
of the dimension of the reactive equation, so long as the reaction satisfies an assumption of exponential decay of space-independent states (with temperature at $-\infty$ above the ignition temperature).

\medskip

{\bf Plan of the paper.} Lemma \ref{ex} and Corollary \ref{profdecay} are proved in Section \ref{sec:strong}. We give a detailed description of the low-frequency behavior of the resolvent kernel for the linearized equations in Section \ref{sec:resker}, following \cite{MaZ3}. In Section \ref{stabsection}, we prove Theorem \ref{stab}, while Section \ref{bifsection} is devoted to the proof of Theorem \ref{PH}. Finally, in Section \ref{instabproof}, we prove Theorem \ref{instab}.

\medbreak
{\bf Acknowledgement.}
Thanks to Bj\"orn Sandstede
and Arnd Scheel for their interest in this work
and for stimulating discussions on spatial dynamics 
and bifurcation in the absence of a spectral gap. Thanks to Gregory Lyng for pointing out reference \cite{Ch}. 
B.T. thanks Indiana University
for their hospitality during the collaborative visit in
which the analysis was carried out.
B.T. and K.Z. separately thank the
Ecole Polytechnique F\'ed\'erale de Lausanne for their hospitality
during two visits in which a substantial part of the
analysis was carried out.

\section{Strong detonations} \label{sec:strong}

\begin{proof}[Proof of Lemma {\rm \ref{ex}.}] Let $U_-$ be a given left endstate, with $z_- =0,$ satisfying \eqref{burned-temp} and \eqref{Lax}. We look for a right endstate $U_+,$ with $z_+ \in (0,1],$ that satisfies \eqref{RH}, \eqref{Lax}, and $T_+ < T_i.$ We note that \eqref{RH}(i) determines $u_+$ and that $T_+ < T_i$ entails \eqref{RH}(v). 

  The Rankine-Hugoniot relations in the $(\t_+,p_+)$ plane are     
 $$ \left\{\begin{aligned}
  p & = -s^2 \t + c_1 & \qquad \mbox{(R)}, \\
  p & = (c_0 - s \t (1 + \Gamma^{-1}))^{-1}(c_2 + s q z_+ + \frac{1}{2} s^3 \t^2 - s^2 c_0 \t) & \qquad \mbox{(H)},
  \end{aligned}\right.$$
  where (R) is the Rayleigh line, corresponding to \eqref{RH}(ii), (H) the Hugoniot curve, corresponding to \eqref{RH}(iii), and where
$$ c_0 := u_- + s \t_-, \qquad
  c_1 := p_- + s^2 \t_-, \qquad c_2:= (p_- u_- - s E_-) + \frac{1}{2} c_0^2 s$$
  depend on parameters $U_-$ and $s.$ The temperature and Lax constraints for both enstates are
  $$ \left\{\begin{aligned}
  \t_+ p_+ & < c \Gamma T_i < \t_- p_-  & \quad \mbox{(T)}_\pm,\\
  \t_+^{-1} p_+ & < (\Gamma +1)^{-1} s^2 < \t_-^{-1} p_- & \quad \mbox{(L)}_\pm. 
  \end{aligned}\right.$$
  
  We restrict to left endstates satisfying in the large $s$ regime
  \begin{equation} \label{reg1}   \t_- = O(1), \qquad p_- = 2 s^2 \Gamma^{-1} \t_- + \tilde p_-, \qquad u_- = s \tilde u_-,  \end{equation}
  with $\tilde u_- = O(1)$ and $\tilde p_- = O(1).$
  Under \eqref{reg1}, conditions $\mbox{(T)}_-$ and $\mbox{(L)}_-$ are satisfied as soon as $s$ is large enough. The Hugoniot curve takes the form 
  $$  \begin{aligned} p_{{\rm \tiny{H}}} = \left( \tilde u_- + \t_- - (1 + \Gamma^{-1}) \t \right)^{-1} & \Big(\frac{1}{2} s^3 (\t - (1 + 2 \Gamma^{-1}) \t_-)(\t - (1 - 2 \Gamma^{-1}) \t_- - 2 \tilde u_-) \\ & \qquad  + s q z_+\Big). \end{aligned}
   $$
   Assume that $\tilde u_-$ is such that
   \begin{equation}\label{reg-u} 
    \frac{\t_-}{1 + \Gamma^{-1}} < (1 - 2 \Gamma^{-1}) \t_- + 2 \tilde u_- < (1 + 2 \Gamma^{-1}) \t_-.
    \end{equation}
  For any such $\tilde u_-,$ any given $\t_-$ and any $q > 0,$ if $s$ is large enough then, for any $z_+ \in (0,1],$ the Hugoniot curve has two zeros $\un \t < \overline \t,$ with asymptotic expansions
  \begin{equation} \label{bart}
  \underline \t = (1 - 2 \Gamma^{-1}) \t_- + 2 \tilde u_- + O(s^{-2}).
  \end{equation}
  \begin{equation} \label{unt}
   \overline \t =  (1 + 2 \Gamma^{-1}) \t_- - s^{-2} \frac{\tilde p_- \tilde u_- + q z_+}{2 \Gamma^{-1} \t_- - \tilde u_-} + O(s^{-3}).
   \end{equation}
  If $s$ is large, by \eqref{reg-u}, $\t_0 < \un \t < \overline \t,$ where $\t_0 := c_0 s^{-1} (1 + \Gamma^{-1})^{-1}$ is the pole of (H).
   
  The Rayleigh line and the Hugoniot curve have at least one intersection point to the right of $\t_0$ if 
  $$ p_{\rm \tiny{R}}(\overline \t) < 0 < p_{\rm \tiny{R}}(\un \t).$$
  Under \eqref{reg-u}, the inequality $0 < p_{\rm \tiny{R}}(\un \t)$ holds, and $p_{\rm \tiny{R}}(\overline \t) < 0$ holds as well if in addition 
  \begin{equation} \label{6.9.1}
   \tilde p_- < - \frac{\tilde p_- \tilde u_- + q z_+}{2 \Gamma^{-1} \t_- - \tilde u_-}.
   \end{equation}

 Let $\t_+$ be an intersection point of (R) and (H) to the right of $\t_0.$ 
    Condition $\mbox{(T)}_+$ is satisfied if 
     \begin{equation} \label{alpha5}
      \t_+ = (1 + 2 \Gamma^{-1}) \t_- + s^{-2} \tilde \t_{+} + O(s^{-3}), 
      \end{equation}
    with 
    \begin{equation} \label{alpha6}
     (1 + 2 \Gamma^{-1}) \t_- (\tilde p_- -\tilde \t_+) < c \Gamma T_i.
     \end{equation}
     Condition $\mbox{(L)}_+$ is satisfied if 
    \begin{equation} \label{alpha4}
     (1 + 2 \Gamma^{-1}) \t_- < (1 + (1 + \Gamma)^{-1}) \t_+,
     \end{equation}
which holds under \eqref{alpha5}, if $s$ is large. 
   We plug the ansatz \eqref{alpha5} in the equation $p_{\rm \tiny{H}} = p_{\rm \tiny{R}},$ to find  
   \begin{equation} \label{tildet+}
   \tilde \t_+ = \frac{\Gamma \tilde p_-}{(1 + 2\Gamma^{-1})\t_-} \left((1 + \Gamma^{-1})(1 + 2 \Gamma^{-1}) - 1\right) + \frac{\Gamma q z_+}{(1 + 2\Gamma^{-1}) \t_-}.
     \end{equation}
  The intersection point $\t_+$ is an admissible right specific volume if $p_{\rm \tiny{H}}(\t_+) > 0$ and $p_{\rm \tiny{R}}(\t_+) > 0.$ These inequalities holds if
  \begin{equation} \label{more}
   \underline \t < \t_+ < (\a+ 1) \t_- + s^{-2} \tilde p_-.
   \end{equation}
  The inequalities \eqref{6.9.1}, \eqref{alpha6} and \eqref{more} are constraints on $\t_-, \tilde p_-,$ and $\tilde u_-.$ The lower bound on $\t_+$ in \eqref{more} is satisfied in the regime \eqref{reg1} if $s$ is large. If we let 
  $$\tilde p_- = \frac{- 2 \Gamma q z_+}{\t_-} + O(s^{-1}),$$
 then \eqref{6.9.1} holds. Finally, if $\t_-$ satisfies
    $$ 1  <     \frac{1}{4 \t_-} \left(3 + 2 \Gamma^{-1} - (1 + 2 \Gamma^{-1}) \t_-\right) < 1 + \frac{c T_i}{q z_+}.$$
   then the upper bound in \eqref{more} and \eqref{alpha6} hold as well. 
     
  The Rayleigh line (R), the Hugoniot curve (R) and the temperature (T) and Lax (L) constraints are pictured on Figure \ref{fig-RH}. The black dots represent the intersection points of (R) and (H). Note that (L) and (R) imply $\t_- < \t_+$ for a strong detonation, so that only the intersection point to the right to $\t_-$ is admissible. (The other intersection point corresponds to a deflagration, see for instance \cite{LyZ2}, Section 1.4.) 
\begin{figure}[t]
\begin{center}
\scalebox{.4}{\input{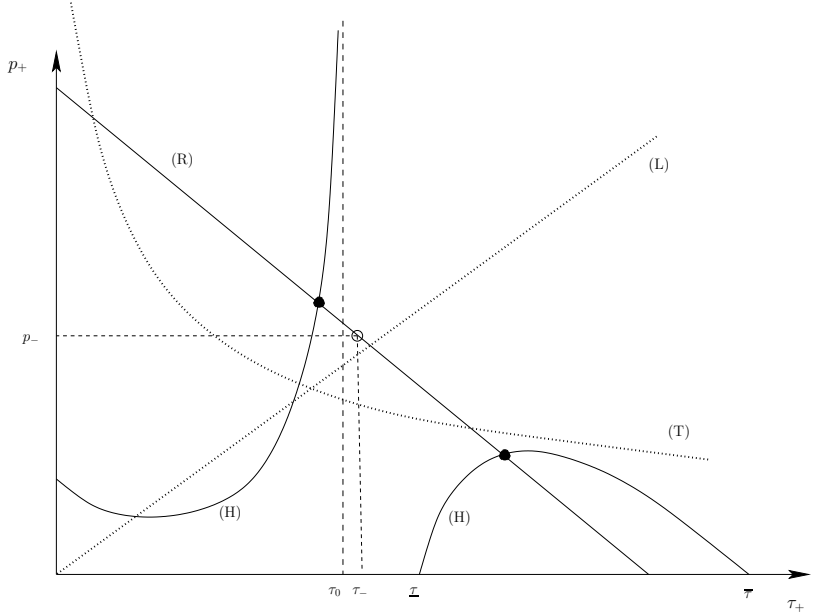}}
\caption{The Rankine-Hugoniot, Lax and temperature conditions.}
\label{fig-RH}
\end{center}
\end{figure}
 \end{proof}

 \begin{proof}[Proof of Corollary {\rm \ref{profdecay}.}] Rewrite \eqref{tw-ode} as $U' = {\mathfrak F}(\e,U).$ Let $U^\e_\pm$ be the endstates of a family of strong detonations. The linearized equations at $U^\e_\pm$ are governed by matrices
 \begin{equation} \label{lin-tw-ode}
 \d_U {\mathfrak F}(\e,\bar U^\e_\pm) = \left(\begin{array}{cc} a^f_\pm & * \\ 0 & a^r_\pm \end{array}\right).
  \end{equation}
 The block triangular structure is a consequence of Assumption \ref{idealgas}, \eqref{burnedstates}, \eqref{ignition-right}, and \eqref{ignition-right-prime}. Under Assumption \ref{idealgas}, the eigenvalues $\l$ of $a^f_\pm$
 $$ a^f_\pm := \left(\begin{array}{cc}  \nu \t^{-1} & 0 \\ \t^{-1}(\nu - \kappa c^{-1}) u & \kappa c^{-1} \t^{-1} \end{array}\right) \left(\begin{array}{cc} \d_u p - s - s^{-1} \d_\t p & \d_e p \\ u (\d_u p - s^{-1} \d_\t p) + p & u \d_e p - s \end{array}\right),$$
 satisfy
  \begin{equation} \label{e-af} \begin{aligned}
  \l^2 + \big(s \kappa c^{-1} \t_\pm^{-1} + s^{-1} \nu \t_\pm^{-1}(s^2 + (\d_\t p)_\pm)\big) \l + \kappa c^{-1} \nu \t^{-2}_\pm(s^2 - \sigma_\pm^2) = 0.\end{aligned}
  \end{equation}  
 The Lax condition \eqref{Lax} implies that the center subspace on both sides is trivial, that the eigenvalues of $a^f_-$ have opposite signs, and that the eigenvalues of $a^f_+$ are negative. The eigenvalues $\l$ of
 $$ a^r_\pm := \left(\begin{array}{cc} -s d^{-1} & k d^{-1} \phi(T_\pm)\\ 1 & 0\end{array}\right)$$
 satisfy
  $$ d \l^2 + s \l - k \phi(T_\pm) = 0.$$
 They are non zero and have distinct signs on the $-\infty$ side. On the $+\infty$ side, there is one negative eigenvalue, and a one-dimensional kernel. In particular, $U^\e_-$ is a hyperbolic rest point of the linearized traveling-wave ordinary differential equation, which implies \eqref{profdecayeq}(i) with $j= 0,$ by standard ODE estimates. However, the linearized traveling-wave equations at $U_+$ have a one-dimensional center subspace, which a priori precludes exponential decay \eqref{profdecayeq}. 
 
 From Lemma \ref{ex}, if $U_- \in {\cal O},$ then the system \eqref{tw-ode} has a line of equilibria that goes through $U^\e_+.$ Any center manifold of \eqref{tw-ode} at $U^\e_+$ contains all equilibria, so by
dimension count it must consist of equilibria.  Therefore, the $4$-dimensional stable center
manifold at $U^\e_+$ consists (again by dimension count) of the union of the stable
manifolds of all equilibria. Since solutions off of stable center manifold
do not stay for all time in small vicinity of center manifold, any
traveling-wave orbit must lie on the center-stable manifold, so lies
on the stable manifold of some equilibrium. Exponential decay, \eqref{profdecayeq}(ii), $j= 0,$ now follows
by the stable manifold theorem.

 To prove \eqref{profdecayeq} with $j= 1,$ consider now the traveling-wave ODE in $(U,\d_\e U).$ The rest points satisfy
  \begin{equation} \label{rest} {\mathfrak F}(\e,U) = 0, \qquad \d_\e {\mathfrak F}(\e,U) + \d_U {\mathfrak F}(\e,U) \d_\e U = 0.\end{equation}
  The kernel of $\d_U {\mathfrak F}(\e,U^\e_+)$ being one-dimensional, \eqref{rest} has a two-dimensional manifold of solutions. Let $(U^\e_+,V^\e_+)$ be such a rest point. The linearized equations at $(U^\e_+, V^\e_+)$ are governed by matrices
   $$ \left(\begin{array}{cc} \d_U {\mathfrak F}(\e,U^\e_+) & 0 \\ * & \d_U {\mathfrak F}(\e,U^\e_+)\end{array}\right),$$
   where the bottom left entry depends on second derivatives of ${\mathfrak F}.$ In particular, the linearized equations have a two-dimensional center subspace. We can thus argue as above that any center manifold consists entirely of equilibria, and that \eqref{profdecayeq}(ii) holds with $j=1.$ The proof of \eqref{profdecayeq}(i) with $j=1$ is similar. 
     \end{proof}

\section{Resolvent kernel and Green function bounds} \label{sec:resker}

 The linearized equations about a traveling wave $\bar U^\e$ solution of \eqref{rNS-symb} are 
  \begin{equation} \label{linearized}
   \d_t U = L(\e) U,
   \end{equation}
where $L(\e)$ is defined in \eqref{def-l}.
  The coefficients of $L(\e)$ are asymptotically constant at $\pm\infty.$ Let $L_\pm(\e)$ be the associated constant-coefficient, limiting operators: 
  $$L_\pm(\e) : = - A_\pm\d_x  + B_\pm \d_x^2 + G_\pm,$$
 with the notation $A_\pm := \d_U F(U^\e_\pm),$ $B_\pm := B(U^\e_\pm),$ $G_\pm := \d_U G(U^\e_\pm).$ 
 Let $L(\e)^*$ denote the dual operator of $L(\e).$ 
Its associated constant-coefficient, limiting operators are
 $ L_\pm(\e)^* = A_\pm^* \d_x + B^*_\pm \d_x^2 + G_\pm^*.$

\subsection{Laplace transform}

 Consider the Laplace transform of the linearized equations,
 \begin{equation} \label{laplace}
  (L(\e) - \l)  U = 0, \qquad \l \in \C, \quad x \in \R, \quad U(\e,x,\l) \in \C^4.
  \end{equation}
  Equation \eqref{laplace} can be cast as a first-order ordinary differential system in $\R^7,$
 \begin{equation} \label{ode}
  W' = \mA(\e,\l) W, \qquad \l \in \C, \quad x \in \R, \quad W(\e,x,\l) \in \C^7,
  \end{equation} 
  where the limits $\mA_\pm$ of $\mA$ at $\pm \infty$ are given by
   \begin{equation} \label{mA}
    \mA_\pm := \left(\begin{array}{ccc} s^{-1} \l & 0 & - s^{-1} J b_\pm^{-1} \\ 0 & 0 & b_\pm^{-1} \\ s^{-1} \l \d_\t f_{|\pm} & \l - \d_w g_{|\pm} & (\d_w f_{|\pm} - s^{-1} \d_\t f_{|\pm} J) b_\pm^{-1} \end{array}\right),
    \end{equation}
 where ${}|_{\pm}$ denotes evaluation at $U^\e_\pm,$ $b_\pm := b(U^\e_\pm),$ and $J$ is defined in \eqref{J}.

 Considered as an operator in $L^2(\R;\C^4),$ $L$ is closed, with domain $H^2$ dense in $L^2.$ Similarly, for all $\l,$ the operator $$\frac{d}{dx} - \mA(\l): \quad H^1(\R;\C^7) \subset L^2(\R;\C^7) \to L^2(\R;\C^7)$$ is closed and densely defined.

 The following straightforward Lemma gives a correspondence between \eqref{laplace} and \eqref{ode}.  
 \begin{lem} \label{corresp} Let $\l \in \C$ and $f = (f_1,f_2) \in L^2(\R;\C^1 \times \C^3).$ If the equation \begin{equation} \label{1lem} (L - \l) U = f
 \end{equation} has a solution $U =: (\t,w) \in H^2(\R;\C^1 \times \C^3),$ then $W := (\t, w, b w') \in H^1(\R;\C^7)$ satisfies 
 \begin{equation} \label{2lem} W' = \mA(\l) W + F,
 \end{equation} with $F = (f_1,0,f_2) \in L^2(\R;\C^7).$ Conversely, let $F = (f_1,0,f_2) \in L^2(\R;\C^7)$ and $\l \in \C.$ If $W = (w_1, w_2) \in H^1(\R;\C^7)$ satisfies \eqref{2lem}, then a solution in $H^2(\R;\C^4)$ to \eqref{1lem} with $f = (f_1,f_2)$ is given by $U = w_1.$ 
  \end{lem}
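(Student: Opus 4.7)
The plan is to verify the equivalence by direct algebraic manipulation, exploiting the $(\tau,w)$-coordinate form of \eqref{rNS-symb} displayed in Section \ref{sec:coord0} and, in particular, the block-diagonal structure of the diffusion matrix $B$ guaranteed by Assumption \ref{idealgas}. The whole argument is pointwise in $x$; there is no analytic obstacle, just bookkeeping.

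First, I would split $(L-\lambda)U=f$ into its scalar $\tau$-component and its $\CC^3$-valued $w$-component. By property (A1), the linearization of the $\tau$-equation is a first-order scalar relation of the form $s\,\tau'+J w' - \lambda \tau = \pm f_1$, while the linearization of the $w$-equation produces a genuinely second-order $\CC^3$-valued relation involving $(bw')'$ together with lower-order terms coming from the chain rule applied to $\partial_x(f(\tau,w))$ and to $\partial_x(b(\tau,w)\partial_x w)$, plus the zeroth-order contribution of $\partial_w g$. Introducing the auxiliary variable $z:=bw'$, which is well defined pointwise because $b(\bar U^\e)$ is smooth and invertible on all of $\R$, the second-order $w$-equation becomes a first-order equation for $z'$, while the defining identity $w' = b^{-1} z$ supplies Row 2 of $\mathcal{A}$.

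Next, I would solve the scalar $\tau$-equation for $\tau'$ in terms of $(\tau,z,f_1)$, producing Row 1 of $\mathcal{A}$ after using $w'=b^{-1}z$, and substitute the resulting expression for $\tau'$ into the $z'$-equation. Collecting the resulting coefficients, grouping them against $(\tau,w,z)$, yields exactly the third row of $\mathcal{A}(x,\lambda)$ whose asymptotic limits $\mathcal{A}_\pm$ are displayed in \eqref{mA}, and assembles the source into the claimed block form $F=(f_1,0,f_2)$ (up to fixed linear combinations absorbed into the identification of the source). This gives \eqref{2lem} for $W=(\tau,w,bw')\in H^1(\R;\CC^7)$.

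For the converse, starting from $W=(w_1,w_2)\in H^1(\R;\CC^7)$ satisfying \eqref{2lem}, I would split $W$ into the three natural blocks $(\tau,w,z)$ of dimensions $1,3,3$; Row 2 of $\mathcal{A}$ then forces $w'=b^{-1}z$, so that in particular $z=bw'$ and $w\in H^2(\R;\CC^3)$, whence $U:=(\tau,w)=w_1\in H^2(\R;\CC^4)$. Substituting $z=bw'$ back into Rows 1 and 3 exactly reproduces the linearized $(\tau,w)$-system, so $U$ solves \eqref{1lem}. The only conceptual point worth emphasizing is the dimension count $1+2\times 3=7$: the conservative $\tau$-equation contributes a single dimension to the phase space rather than two, which is the structural reason the first-order system lives in $\CC^7$ and not in $\CC^8$.
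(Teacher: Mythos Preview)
The paper does not supply a proof of this lemma; it is introduced as ``the following straightforward Lemma'' and stated without argument. Your proposal---writing out the linearized $(\tau,w)$-system, introducing the auxiliary flux variable $z=bw'$, and reading off the rows of $\mathbb{A}$---is exactly the standard reduction of a second-order system to first order and is correct. Your hedge ``up to fixed linear combinations absorbed into the identification of the source'' is prudent: comparing Row~1 of \eqref{mA} with the linearized $\tau$-equation $s\tau' + Jw' - \lambda\tau = f_1$ shows the first component of $F$ is really $s^{-1}f_1$ rather than $f_1$, a harmless normalization that does not affect the correspondence.
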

  
  Similarly, the dual eigenvalue equation
 \begin{equation} \label{laplace-dual}
  (L(\e)^* - \l)  \tilde U = 0, \qquad \l \in \C, \quad y \in \R, \quad \tilde U(\e,y,\l) \in \C^4,
  \end{equation}
  can be cast as 
\begin{equation} \label{ode-dual}
  \tilde W' = \tilde \mA(\e,\l) \tilde W, \qquad \l \in \C, \quad y \in \R, \quad \tilde W(\e,y,\l) \in \C^7,
  \end{equation} 
 where the limits $\tilde \mA_\pm$ of $\tilde \mA$ at $\pm \infty$ are given by 
 \begin{equation} \label{tildemA} \tilde \mA_\pm := \left(\begin{array}{ccc} - s^{-1} \l & 0 & s^{-1} \d_\t f_{|\pm}^{\tt tr} b^{\tt tr -1}_\pm \\ 0 & 0 & b^{\tt tr -1}_\pm \\ s^{-1} \l J^{\tt tr} & \l - \d_w g^{\tt tr}_{|\pm} & - (\d_w f_{|\pm}^{\tt tr} + s^{-1} J^{\tt tr} \d_\t f_{|\pm}^{\tt tr}) b^{\tt tr -1}_\pm \end{array}\right).
  \end{equation}
 A correspondence between \eqref{laplace-dual} and \eqref{ode-dual} holds, as in Lemma \ref{corresp}.

\subsubsection{The limiting, constant-coefficient equations}

 Associated with \eqref{laplace} and \eqref{laplace-dual} are the limiting, constant-coefficient eigenvalue equations
 \begin{equation} \label{e-eq-pm}
 (L_\pm(\e) - \l) U = 0, 
 \end{equation}
 and
 \begin{equation} \label{e-eq-pm-dual}
  (L_\pm(\e)^* - \l) \tilde U = 0.
 \end{equation}
  
  \begin{defi}[Normal modes] We call normal modes the solutions $(\l, U)$ of equations \eqref{e-eq-pm} and dual normal modes the solutions $(\l, \tilde U)$ of equations \eqref{e-eq-pm-dual}.
\end{defi}

 Associated with \eqref{ode} and \eqref{ode-dual} are the limiting, constant-coefficient differential equations
 \begin{equation} \label{limit-ode}
   W' = \mA_\pm(\e,\l) W,
   \end{equation}
  and
  \begin{equation} \label{dual-limit-ode}
  \tilde W' = \tilde \mA_\pm(\e,\l) \tilde W,
   \end{equation}
 where $\mA_\pm$ and $\tilde \mA_\pm$ are defined in \eqref{mA} and \eqref{tildemA}. 
  
  There is a correspondence between solutions of \eqref{e-eq-pm} and solutions of \eqref{limit-ode}:
     
  \begin{lem} \label{corresp2} If $(\l_0,U),$ $U =: (\t,w),$ is a normal mode, then $W := (\t, w, b w')$ solves \eqref{limit-ode} at $\l = \l_0.$ Conversely, if $W = (w_1, w_2) \in \C^4 \times \C^3$ solves \eqref{limit-ode} at $\l = \l_0,$ then $(\l_0, w_1)$ is a normal mode. In particular, 
   \begin{itemize}
   \item[{\rm (i)}] Eigenvalues $\mu$ of $\mA_\pm$ satisfy 
  \begin{equation} \label{n-modes}
  \det( - \mu A_\pm + \mu^2 B_\pm + G_\pm - \l) = 0,
  \end{equation}
and associated eigenvectors, satisfying $\mA_\pm(\l) W = \mu W,$ have the form $W = (U, w_2) \in \C^4 \times \C^3,$ with 
 \begin{equation} \label{vect-n-modes}
  U \in \ker( - \mu A_\pm + \mu^2 B_\pm + G_\pm - \l), \quad U =:(\t,w), \, w_2 := \mu b_\pm w,
 \end{equation}
 \item[{\rm (ii)}] Normal modes $(\l,U)$ satisfy
   \begin{equation} \label{12.5} 
    U = \sum_j e^{x \mu^\pm_j(\l)} U^\pm_j(x,\l),
   \end{equation}
   where the $\mu_j^\pm$ are eigenvalues of $\mA_\pm,$ and the $U_j^\pm$ are polynomials in $x.$
\end{itemize}
\end{lem}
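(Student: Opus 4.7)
The plan is to reduce Lemma \ref{corresp2} to Lemma \ref{corresp} applied with the operator $L$ replaced by the constant-coefficient limits $L_\pm(\e)$, combined with standard Jordan-form theory for first-order linear ODE systems with constant coefficients. The key structural input is already present in the preceding discussion: in the $(\tau,w)$ coordinatization the diffusion matrix is block-diagonal with a full-rank $3\times 3$ block $b$, and this is exactly what permits passage from the second-order eigenvalue equation to the first-order system via the auxiliary variable $w_2:=bw'$.

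For the correspondence asserted in the first two sentences of the lemma, I would observe that Lemma \ref{corresp} is proved by a purely algebraic manipulation that is insensitive to whether the coefficients depend on $x$ or are frozen at $U^\e_\pm$. Applying it with $f\equiv 0$ and with $b$ replaced by $b_\pm$ yields immediately: a normal mode $(\lambda_0,U)$, $U=(\tau,w)$, produces the solution $W=(\tau,w,bw')$ of \eqref{limit-ode}, and conversely, if $W=(w_1,w_2)\in\CC^4\times\CC^3$ solves \eqref{limit-ode}, then $w_1$ solves $(L_\pm(\e)-\lambda_0)w_1=0$. A dimension count confirms consistency: the second-order system in $w\in\CC^3$ coupled to the first-order equation in $\tau$ has $2\cdot 3+1=7$ degrees of freedom, matching the dimension of solutions of \eqref{limit-ode}.

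For part (i), I would substitute the ansatz $W(x)=e^{\mu x}W_0$, $W_0\in\CC^7$, into \eqref{limit-ode}, reducing it to $\mA_\pm(\lambda)W_0=\mu W_0$. By the correspondence just established, this is equivalent to the existence of a nontrivial $U_0\in\CC^4$ with $(L_\pm(\e)-\lambda)(e^{\mu x}U_0)=0$, i.e.
\begin{equation*}
(-\mu A_\pm +\mu^2 B_\pm + G_\pm -\lambda)U_0=0,
\end{equation*}
giving the dispersion relation \eqref{n-modes}. The explicit form \eqref{vect-n-modes} is then read off from the definition $W_0=(U_0,b_\pm w_0')$ at the exponential solution, which yields $w_2=\mu b_\pm w_0$ with $U_0=(\tau_0,w_0)$. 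As a sanity check, one can verify directly from the block form \eqref{mA} that the characteristic polynomial of $\mA_\pm(\lambda)$ factors through $\det(-\mu A_\pm+\mu^2B_\pm+G_\pm-\lambda)$, which is a routine though slightly bulky linear-algebra calculation using (H1).

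For part (ii), I would invoke the Jordan normal form of $\mA_\pm(\lambda)$: the general solution of \eqref{limit-ode} is a finite sum of terms of the form $x^k e^{\mu_j^\pm(\lambda)x}W_{j,k}$, where the $\mu_j^\pm$ are the eigenvalues from part (i) and the integer $k$ is controlled by the corresponding Jordan block sizes. Projecting onto the first $\CC^4$ component via the correspondence and grouping the polynomial factors at each exponential rate into a single vector-valued polynomial $U_j^\pm(x,\lambda)$ gives \eqref{12.5}. The only genuine bookkeeping point, which I see as the main (mild) obstacle, is to confirm that no nontrivial solution is lost in the projection $W\mapsto w_1$; this follows because, by the correspondence, $w_1=0$ forces $w_2=b_\pm w_1'=0$, hence $W=0$.
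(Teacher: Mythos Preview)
Your argument is correct and is exactly the routine verification the paper has in mind; the paper states this lemma without proof, treating the correspondence as an immediate specialization of Lemma~\ref{corresp} to the constant-coefficient limits together with standard Jordan-form representation of solutions to $W'=\mA_\pm W$. The only minor slip is notational: in your last sentence $w_1\in\C^4$ while $b_\pm$ is $3\times3$, so you should write $w_2=b_\pm w'$ with $w$ the last three components of $w_1$, but the substance is fine since $w'=b_\pm^{-1}w_2$ is the middle block of \eqref{limit-ode}.
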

 
 The correspondence between \eqref{e-eq-pm-dual} and \eqref{dual-limit-ode} is similar. In particular, eigenvalues $\tilde \mu$ of $\tilde \mA_\pm$ satisfy
\begin{equation} \label{d-n-modes} \det( \tilde \mu A^*_\pm + \tilde \mu^2 B^*_\pm + G^*_\pm - \l) = 0,
\end{equation}
associated eigenvectors, satisfying $\tilde \mA_\pm(\l) \tilde W = \tilde \mu \tilde W,$ have the form $\tilde W = (\tilde U, \tilde w_2) \in \C^4 \times \C^3,$ with 
 \begin{equation} \label{dual-vect-n-modes}
  \tilde U \in \ker( \tilde \mu A^*_\pm + \tilde \mu^2 B^*_\pm + G^*_\pm - \l), \quad \tilde U =: (\tilde \t, \tilde w), \quad \tilde w_2 := \mu b^{\tt tr}_\pm \tilde w,
 \end{equation}
  and dual normal modes satisfy
 \begin{equation} \label{dual-n-modes}
  \tilde U = \sum_j e^{y \tilde \mu^\pm_j(\l)} \tilde U^\pm_j(y,\l),
   \end{equation}
   where the $\tilde \mu_j^\pm$ are eigenvalues of $\tilde \mA_\pm,$ and the $\tilde U_j^\pm$ are polynomials in $y.$

  If $\tilde \mu(\l)$ is an eigenvalue of $\tilde \mA_\pm(\l),$ then $\overline{\tilde \mu(\l)} = - \mu(\bar \l),$ where $\mu(\bar \l)$ is some eigenvalue of $\mA_\pm(\bar \l).$ The matrices $A_\pm, B_\pm$ and $G_\pm$ having real coefficients, the complex conjugate of $\mu(\bar \l)$ is an eigenvalue of $\mA_\pm(\l).$ We can thus relate the solutions of \eqref{n-modes} and \eqref{d-n-modes} by
  $$ \tilde \mu(\l) = - \mu(\l).$$
 Note that $z_+^\e =1,$ $\phi(T_+^\e) = 0,$ and $\phi'(T_+^\e) = 0$ imply that the $v$ derivative of the coupling reaction term $k \phi(T) z$ vanishes when evaluated at $U^\e_\pm.$ In particular, in $(v,z)$ coordinates,
  $$ A_\pm = \left(\begin{array}{cc} \d_v f^\sharp_{|\pm} & \d_z f^\sharp_{|\pm} \\ 0 & -s \end{array}\right), B_\pm = \left(\begin{array}{cc} {b_1^\sharp}_{|\pm} & {b_2^\sharp}_{|\pm} \\ 0 & d \end{array}\right),  G_\pm = \left(\begin{array}{cc} 0 & 0 \\ 0 & - k \phi_{\pm} \end{array}\right),$$
  with the notation of Section \ref{sec:coord0}, ${}|_{\pm}$ denoting evaluation at $U^\e_\pm,$ and $\phi_\pm := \phi(T_\pm^\e),$ so that $\phi_+ = 0,$ while by \eqref{burned-temp} and Assumption \ref{ignition}, $\phi_- > 0.$ This triangular structure of the matrix $- \mu A_\pm + \mu^2 B_\pm + G_\pm$ allows a simple description of the solutions of \eqref{n-modes}. Indeed, \eqref{n-modes}, a polynomial, degree four equation in $\l,$ splits into the linear equation
 \begin{equation} \label{reactive-eq}  \mu s + \mu^2 d - k \phi_\pm - \l = 0,\end{equation}
 and the degree three equation
 \begin{equation} \label{fluid-eq}
  \det( - \mu \d_v f^\sharp_{|\pm} + \mu^2 {b_1^\sharp}_{|\pm} - \l) = 0.
  \end{equation}
  By inspection, \eqref{reactive-eq} is quadratic in $\mu,$ while \eqref{fluid-eq} is degree five in $\mu.$ Thus, the four solutions $\l(\mu)$ of \eqref{n-modes} correspond to seven eigenvalues $\mu(\l)$ of $\mA(\l).$

\subsubsection{Low-frequency behaviour of the normal modes} \label{sec:low-f}

 We describe here the behaviour of the normal modes in a small ball $B(0,r) := \{ \l \in \C, |\l| < r\}.$

\begin{defi}[Slow modes, fast modes] We call slow mode at $\pm\infty$ any family of normal modes $$\{ (\l, U(\l) \}_{\l \in B(0,r)}, \qquad  \mbox{for some $r > 0,$}$$ such that, in \eqref{12.5}, $\mu_j^\pm(0) = 0,$ for all $j.$  Normal modes which are not slow are called fast modes. We define similarly slow dual modes and fast dual modes, using \eqref{dual-n-modes}.  
\end{defi}

The solutions of \eqref{reactive-eq} are 
   \begin{equation} \label{r4}
   \mu^\pm_{4} =  \frac{1}{2d}(- s + (s^2 + 4 d(\l + k \phi_\pm))^{\frac{1}{2}}),
   \end{equation} 
 \begin{equation} \label{r5}
   \mu^\pm_{5} = - \frac{1}{2d}(s + (s^2 + 4 d(\l + k \phi_\pm))^{\frac{1}{2}});
   \end{equation} 
 they depend analytically on $\l$ (in the case of $\mu^4_+$ and $\mu_5^+,$ this is ensured by $s > 0,$ assumed in Definition \ref{def:strong}), and satisfy, for $\l$ in a neighborhood of the origin,
 \begin{equation} \label{DAS-r-slow-4+}
  \mu^+_{4} = s^{-1} \l  - s^{-3} d \l^2 + O(\l^3), \quad \mu^-_{4} > 0, \quad \mu^\pm_{5} < 0.
  \end{equation}
 Note that the inequality $\mu^-_{4} > 0$ is a consequence of $\phi_- > 0.$ By \eqref{dual-vect-n-modes}, the eigenvector of $\tilde \mA_+$ that is associated with $- \mu_{4}^+$ is
 \begin{equation} \label{dual-reactive-mode}
  L^+_{4} = \left(\begin{array}{c} \un \ell^{+}_4 \\ \mu_4^+ b_+^{\tt tr} \un \ell^+_4\end{array}\right) \in \C^4 \times \C^3, \quad \un \ell_4^+(0) = \ell_4^+,
  \end{equation}
  where
  \begin{equation} \label{l4}
  \ell^+_{4} := \left(\begin{array}{cccc} 0 & 0 & 0 & 1 \end{array}\right)^{\tt tr} 
  \end{equation}
  is the reactive left eigenvector of $A_+$ associated with the reactive eigenvalue of $A_+.$ We label $L^-_4,$ $L_5^\pm$ the eigenvectors of $\tilde \mA_\pm$ associated with $-\mu^-_4$ and $-\mu^\pm_5.$ By the block structure of $-\mu A_\pm + \mu^2 B_\pm + G_\pm,$ spectral separation of $\mu_4^-$ and $\mu_5^-$ (and of $\mu_4^+$ and $\mu_5^+$), the eigenvectors $L_4^\pm$ and $L_5^\pm$ are analytic in $\l,$ in a neighborhood of the origin (see for instance \cite{Kat}, II.1.4); in particular, 
  \begin{equation} \label{lm4} 
  \un \ell_4^+ = \ell_4^+ + O(\l), \quad \mu_4^+ b_+^{\tt tr} \un \ell^+_4 = O(\l).
  \end{equation} 
  
The solutions of \eqref{fluid-eq}, seen as an equation in $\l,$ are the eigenvalues of the matrix $ - \mu \d_v f^\sharp_{|\pm} + \mu^2 {b_1^\sharp}_{|\pm}.$ By \eqref{char} and the block structure of $A_\pm,$ we find that the spectrum of $\d_v f^\sharp_{|\pm}$ is 
$$\sigma(\d_v f^\sharp_{|\pm}) = \{ - s(\e) - \s_\pm, \quad -s(\e), \quad -s(\e) + \s_\pm \}.$$
 The eigenvalues of $\d_v f^\sharp_{|\pm}$ are distinct, hence, by Rouch\'e's theorem, the eigenvalues of $-\d_v f^\sharp_{|\pm} + \mu {b_1^\sharp}_{|\pm}$ are analytic in $\mu,$ for small $\mu,$ with expansions 
 \begin{equation} \label{fluid-modes0} \begin{aligned} 
  \l_1 & = s + \s_\pm + \b^\pm_{1} \mu + O(\mu^2), \\ 
  \l_2 & = s + \b^\pm_{2} \mu + O(\mu^2), \\ 
  \l_3 & = s - \s_\pm + \b^\pm_{3} \mu + O(\mu^2).
  \end{aligned}\end{equation}
 By (H3) (Section \ref{remarkson}), $\b^\pm_{j} > 0$ for all $j.$ Inversion of these expansions yields analytic functions $\mu^\pm_{j},$ called fluid modes, and defined in a neighborhood of the origin in $\C_\l:$
 \begin{equation} \label{fluid-modes} \begin{aligned}
 \mu^\pm_{1} & := (s + \s_\pm)^{-1} \l - (s + \s_\pm)^{-3} \b^\pm_{1} \l^2 + O(\l^3), \\
 \mu^\pm_{2} & := s^{-1} \l - s^{-3} \b^\pm_2 \l^2 + O(\l^3), \\
 \mu^\pm_{3} & := (s - \s_\pm)^{-1} \l - (s - \s_\pm)^{-3} \b^\pm_3 \l^2 + O(\l^3).
 \end{aligned}
 \end{equation}
 By \eqref{dual-vect-n-modes}, the eigenvectors of $\tilde \mA$ that are associated with these eigenvalues are
 \begin{equation} \label{dual-fluid-mode}
   L^\pm_{j}(\l) = \left(\begin{array}{c} \un \ell^{\pm}_{j} \\ \mu^\pm_j b_\pm^{\tt tr} \un \ell^\pm_j \end{array}\right) \in \C^4 \times \C^3, \quad \un \ell^{\pm}_j(0) = \ell_j^\pm, \quad 1 \leq j \leq 3,
   \end{equation}
   where the vectors $\ell^\pm_1,$ $\ell^\pm_2$ and $\ell^\pm_3$ are the left eigenvectors of $A_\pm$ associated with the fluid eigenvalues $-s - \s_\pm,$ $-s,$ and $-s+\s_\pm;$ they have the form
  \begin{equation} \label{lj}
   \ell^\pm_{j} := \left(\begin{array}{cccc} * & * & * & 0 \end{array}\right)^{\tt tr}, \qquad 1 \leq j \leq 3. \end{equation}
  The eigenvalues of $-\d_v f^\sharp_{|\pm} + \mu {b_1^\sharp}_{|\pm}$ being distinct, the associated eigenvectors are analytic as well, so that the $L^\pm_j,$ $1 \leq j \leq 3,$ are analytic in $\l;$ in particular, 
  \begin{equation} \label{lm} \un \ell_j^\pm = \ell_j^\pm + O(\l), \qquad \mu^\pm_j b_\pm^{\tt tr} \un \ell^\pm_j = O(\l). \end{equation}
  
Finally, the equation $\det (- \mu \d_v f^\sharp_{|\pm} + \mu^2 {b_1^\sharp}_{|\pm}) = 0$ has two non-zero solutions $\g^\pm_6, \g^\pm_7,$ corresponding to the remaining two (fast) modes, solutions of
 \begin{equation} \label{g67-eq}
  \kappa \t^{-2}_\pm c^{-1} s \nu \mu^2  + (\kappa c^{-1}(s^2 - \Gamma \t^{-2}_\pm e_\pm) + \nu s^2) \t^{-1}_\pm \mu + s(s^2 - \s_\pm^2) = 0.
  \end{equation}
 The Lax condition \eqref{Lax} implies that solutions of \eqref{g67-eq} are distinct and have small frequency expansions
  \begin{equation} \label{other-mode} \begin{aligned} \mu^\pm_{6} & = \g^\pm_{6} + O(\l), & \qquad \g_6^\pm < 0,  \\
   \mu^\pm_{7} & = \g^\pm_7 + O(\l), & \qquad \g_7^- > 0, \quad \g^+_7 < 0. 
   \end{aligned}
   \end{equation}
 We label $L^\pm_6$ and $L^\pm_7$ the eigenvectors of $\tilde \mA$ associated with $-\mu_6^\pm$ and $-\mu_7^\pm.$ Again, by spectral separation, $L^\pm_6$ and $L^\pm_7$ are analytic in $\l.$ 
  
 \begin{lem} \label{low-f} For some $r > 0,$ equations \eqref{dual-limit-ode} have analytic bases of solutions in $B(0,r),$
  \begin{equation} \label{def-v} \tilde {\cal B}^\pm := \{ \tilde V_{j}^\pm \}_{1 \leq j \leq 7}, \quad \tilde V_{j}^\pm := e^{- y \mu_{j}^\pm(\l)} L_{j}^\pm(\l),\end{equation}
  where the eigenvalues $\mu_j^\pm$ are given in \eqref{r4}, \eqref{r5}, \eqref{fluid-modes}, and \eqref{other-mode} and the eigenvectors associated with the slow modes are given in \eqref{dual-reactive-mode}, \eqref{lm4}, \eqref{dual-fluid-mode} and \eqref{lm}.
 \end{lem}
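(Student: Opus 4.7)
The plan is to check that each eigenvalue $\mu_j^\pm$ listed in \eqref{def-v} is analytic on a small ball $B(0,r)$, that its associated dual eigenvector $L_j^\pm$ may be chosen analytically, and that the seven vectors $\{L_j^\pm\}_{j=1}^7$ are linearly independent for $|\lambda|<r$; each $\tilde V_j^\pm$ then solves \eqref{dual-limit-ode} by construction, and independence at $y=0$ forces independence of solutions, so $\tilde{\cal B}^\pm$ is a basis. The identity $\tilde\mu(\lambda)=-\mu(\lambda)$ recorded just before the lemma reduces the spectral analysis of $\tilde\mA_\pm$ to that of $\mA_\pm$, which I work with throughout.

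I would organize the analysis around the block-triangular form of $A_\pm$, $B_\pm$, $G_\pm$ in $(v,z)$ coordinates, which factors the characteristic equation \eqref{n-modes} into the reactive equation \eqref{reactive-eq} (quadratic in $\mu$) and the fluid equation \eqref{fluid-eq} (degree five in $\mu$), giving a clean $2+5$ split of the seven eigenvalues. The fast modes $\mu_5^\pm$, $\mu_6^\pm$, $\mu_7^\pm$ together with the reactive mode $\mu_4^-$ are simple eigenvalues of $\mA_\pm(0)$ bounded away from $0$---reactively by the explicit formulas \eqref{r4}--\eqref{r5} using $\phi_->0$, fluidically by the Lax condition via \eqref{g67-eq} and \eqref{other-mode}---so classical analytic perturbation theory \cite{Kat} produces analytic branches together with analytic one-dimensional eigenprojections on a neighborhood of $\lambda=0$. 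The slow reactive mode $\mu_4^+$ is given explicitly by \eqref{r4} with $\phi_+=0$; the assumption $s>0$ keeps the discriminant $s^2+4d\lambda$ nonzero near $\lambda=0$, so $\mu_4^+$ is analytic.

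For the three slow fluid modes I would factor $-\mu\partial_v f^\sharp_{|\pm}+\mu^2 b_1^\sharp_{|\pm}=\mu(-\partial_v f^\sharp_{|\pm}+\mu b_1^\sharp_{|\pm})$, reducing the slow part of \eqref{fluid-eq} to $\lambda=\mu\,\tilde\lambda_j(\mu)$, where the $\tilde\lambda_j(\mu)$ are the eigenvalues of the bracketed matrix; these are analytic in $\mu$ near $0$ by distinctness of the three fluid eigenvalues $\{s+\sigma_\pm,s,s-\sigma_\pm\}$ of $-\partial_v f^\sharp_{|\pm}$, and the map $\mu\mapsto\mu\tilde\lambda_j(\mu)$ has nonzero derivative $\tilde\lambda_j(0)$ at $\mu=0$, so the implicit function theorem yields the three analytic slow branches $\mu_j^\pm(\lambda)$ with expansions \eqref{fluid-modes}. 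The associated eigenvectors $\underline\ell_j^\pm$, hence $L_j^\pm$, follow from \eqref{dual-vect-n-modes} by choosing an analytic generator of the one-dimensional kernel in the appropriate reduced block. Linear independence of $\{L_j^\pm\}$ at $\lambda=0$ is then checked by inspection: the three fast-mode eigenvectors belong to distinct nonzero eigenspaces of $\mA_\pm(0)$; the slow reactive eigenvector at $+\infty$ has $U$-component $\ell_4^+=(0,0,0,1)^{\tt tr}$ by \eqref{l4}; and the three slow fluid eigenvectors have $U$-components of the form \eqref{lj} spanning three distinct fluid eigenspaces of $A_\pm^*$. Independence persists for $|\lambda|<r$ after possibly shrinking $r$.

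The chief subtlety, and the main obstacle, is that at $+\infty$ the four slow branches merge into a four-dimensional eigenspace of $\mA_+(0)$ at $\lambda=0$, so a direct invocation of Kato's theorem would only produce a holomorphic total projector onto that four-dimensional space rather than analyticity of the individual branches. It is precisely the reactive--fluid block decoupling, combined with the distinctness of the eigenvalues of $\partial_v f^\sharp_{|\pm}$ and separation of the reactive root $\mu_4^+$ from the fluid ones, that splits the degenerate eigenspace into four analytic one-dimensional pieces and delivers the statement of the lemma.
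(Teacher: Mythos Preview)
Your proposal is correct and follows essentially the same approach as the paper: the proof in the paper simply points back to ``the above discussion'' (the material you have recapitulated mode-by-mode) for analyticity of the $\mu_j^\pm$ and $L_j^\pm$, then verifies that $\tilde{\cal B}^\pm$ is a basis by noting simplicity of the eigenvalues for $\lambda\ne 0$ and checking $\lambda=0$ separately by inspection of the expansions. Your emphasis on the four-fold degeneracy of the slow modes at $+\infty$ and the role of the reactive--fluid block decoupling in resolving it is exactly the content underlying the paper's remark that analyticity of the slow eigenvectors follows from the block structure and distinctness of the eigenvalues of $\partial_v f^\sharp_{|\pm}$.
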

   
  \begin{proof} The above discussion describes analytic families $\mu_j^\pm,$ $L_j^\pm,$ such that the vectors $V_j^\pm$ defined in \eqref{def-v} are analytic solutions of \eqref{dual-limit-ode}. For $\l \neq 0,$ the eigenvalues $\mu_j^\pm$ are simple, so that the families $\tilde {\cal B}^\pm$ define bases of equations \eqref{dual-limit-ode}. By inspection of the expansions at $\l = 0,$ the families $\tilde {\cal B}^\pm$ define bases of equations \eqref{dual-limit-ode} at $\l = 0 $ as well. 
    \end{proof}
  
  The above low-frequency expansions of the eigenvalues show that
  \begin{itemize}
  \item[(i)] Equation $\tilde W' = \tilde \mA_-(\l) \tilde W$ has a 3-dimensional subspace of solutions associated with slow modes ($\mu^-_{j},$ $j=1,2,3$) and 4-dimensional subspace of solutions associated with fast modes ($\mu^-_{4}, \mu^-_{5},$ $\mu^-_{6}, \mu^-_{7}$);
  \item[(ii)] Equation $\tilde W' = \tilde \mA_+(\l) \tilde W$ has a 4-dimensional subspace of solutions associated with slow modes ($\mu^+_{j},$ $j=1,2,3,$ and $\mu_{4}^+$) and a 3-dimensional subspace of solutions associated with fast modes ($\mu^+_{5}, \mu^+_{6},$ $\mu^+_{7}$).
  \end{itemize}

\subsubsection{Description of the essential spectrum} \label{sec:ess}

  We adopt Henry's definition of the essential spectrum \cite{He}:
 
 \begin{defi}[Essential spectrum] \label{s_ess} Let ${\cal B}$ be a Banach space and $T: D(T) \subset {\cal B} \to {\cal B}$ a closed, densely defined operator. The essential spectrum of $T,$ denoted by $\s_{ess}(T),$ is defined as the complement of the set of all $\l$ such that $\l$ is either in the resolvent set of $T,$ or is an an eigenvalue with finite multiplicity that is isolated in the spectrum of $T.$
 \end{defi}
 
 By Lemma \ref{corresp2}, the matrix $\mA_\pm(\l)$ has a non trivial center subspace if and only if $\l \in {\cal C}_\pm,$
 $$ {\cal C}_\pm := \{ \l \in \C, \quad \det (- i\xi A_\pm  - \xi^2 B_\pm + G_\pm - \l) =0, \quad \mbox{for some $\xi \in \R$}\}.$$

The following Lemma can be found in \cite{He} (Theorem A.2, Chapter 5 of \cite{He}, based on Theorem 5.1, Chapter 1 of \cite{GK}): 

\begin{lem} \label{loc-s-ess} The connected component of $\C \setminus \big( {\cal C}_- \cup {\cal C}_+\big)$ containing real $+\infty$ is a connected component of the complement of the essential spectrum of $L(\e).$ 
\end{lem}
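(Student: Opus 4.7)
The plan is to translate the analysis of $L(\e) - \lambda$ into the first-order ODE $W' = \mA(\lambda) W$ via Lemma~\ref{corresp}, and then invoke the Gohberg--Krein / Henry framework cited in the statement. The first key observation is that, by Lemma~\ref{corresp2}, the asymptotic matrix $\mA_\pm(\lambda)$ has a purely imaginary eigenvalue $i\xi$ exactly when $\det(-i\xi A_\pm - \xi^2 B_\pm + G_\pm - \lambda) = 0$, i.e.\ precisely when $\lambda \in {\cal C}_\pm$. Consequently, on $\C \setminus ({\cal C}_- \cup {\cal C}_+)$ both $\mA_-(\lambda)$ and $\mA_+(\lambda)$ are hyperbolic, so the ODE $W' = \mA(\lambda) W$ admits exponential dichotomies at $\pm\infty$; the stable/unstable dimensions of $\mA_\pm(\lambda)$ are locally constant, hence constant on each connected component of $\C \setminus ({\cal C}_- \cup {\cal C}_+)$.

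Second, I would use the standard dichotomy--Fredholm correspondence (Palmer's theorem, applied through the identification of Lemma~\ref{corresp}) to deduce that on each such connected component the operator $L(\e) - \lambda : H^2(\R; \C^4) \to L^2(\R; \C^4)$ is Fredholm with constant index. For real $\lambda$ sufficiently large, the sectorial/parabolic structure of $L(\e)$---ensured by (A2) (positivity of $b$ on the $w$-block), together with the hyperbolic regularity of the $\tau$ equation coming from (H1)--(H2)---yields invertibility of $L(\e) - \lambda$ by routine resolvent estimates. Thus on the connected component $\Omega$ of $\C \setminus ({\cal C}_- \cup {\cal C}_+)$ containing real $+\infty$, $L(\e) - \lambda$ is Fredholm of index zero and invertible at one point. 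Analytic Fredholm theory then forces invertibility to fail only on a discrete subset of $\Omega$, at isolated eigenvalues of finite algebraic multiplicity. By Definition~\ref{s_ess}, $\Omega \subset \C \setminus \sigma_{\rm ess}(L(\e))$.

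Finally, I would verify that $\Omega$ is in fact a \emph{full} connected component of $\C \setminus \sigma_{\rm ess}(L(\e))$, not merely contained in one. Since $\partial \Omega \subset {\cal C}_- \cup {\cal C}_+$, at each boundary point a purely imaginary eigenvalue of $\mA_-$ or $\mA_+$ appears; the Gohberg--Krein / Henry result (Theorem~A.2, Chapter~5 of \cite{He}, together with Theorem~5.1, Chapter~1 of \cite{GK}) then places every point of $\partial({\cal C}_- \cup {\cal C}_+)$ inside $\sigma_{\rm ess}(L(\e))$, via translated Weyl singular sequences concentrated near the appropriate end. Any connected open subset of $\C \setminus \sigma_{\rm ess}(L(\e))$ strictly enlarging $\Omega$ would have to contain a path crossing $\partial \Omega$ and thereby hit $\sigma_{\rm ess}(L(\e))$, a contradiction. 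The main conceptual obstacle is precisely the inclusion $\partial({\cal C}_- \cup {\cal C}_+) \subset \sigma_{\rm ess}(L(\e))$, but this is the core content of the classical references invoked, reducing the proof to an assembly of these ingredients rather than a genuine new computation.
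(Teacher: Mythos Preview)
The paper does not actually give a proof of this lemma: it simply records it as a citation of Theorem~A.2, Chapter~5 of \cite{He}, itself based on Theorem~5.1, Chapter~1 of \cite{GK}. Your proposal is a correct and reasonably detailed outline of what those references establish, specialized to the present operator---hyperbolicity of $\mA_\pm(\lambda)$ off ${\cal C}_-\cup{\cal C}_+$, the dichotomy/Fredholm correspondence (Palmer), index zero from invertibility at large real $\lambda$, analytic Fredholm theory for discreteness of eigenvalues, and Weyl sequences to place $\partial\Omega$ inside $\sigma_{\rm ess}(L(\e))$. The paper's surrounding remarks (Remarks~\ref{ee} and the one following) indicate a slightly different but equivalent packaging, via relative compactness of $L(\e)-L_\pm(\e)$ and the dichotomy ``either resolvent-type or filled with point spectrum''; your Palmer-based route and the relative-compactness route are standard variants of the same Henry/Gohberg--Krein argument. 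In short: nothing is missing, and your sketch is more explicit than what the paper provides.
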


The reactive eigenvalues of $- i\xi A_\pm  - \xi^2 B_\pm + G_\pm$ are
 $$ \l = i \xi s - \xi^2 d - k \phi_\pm.$$
 For small $|\xi|,$ the fluid eigenvalues satisfy
 $$ \l = i \un \a \xi - \b \xi^2 + O(\xi^3), \qquad \un \a \in \R, \quad \b > 0,$$
 as described in Section \ref{sec:low-f}; for large $|\xi|,$ they satisfy
 \begin{align} \l & = - \xi^2 (\overline \a + O(\xi^{-1})) & \mbox{(parabolic eigenvalues)},\\
 \intertext{with $\overline \a \in \{ \nu \t^{-1}_\pm, \kappa c^{-1} \t^{-1}_\pm\},$ or}
  \l & = i s \xi + O(1) & \mbox{(hyperbolic eigenvalue)}.
 \end{align}
 This implies that the essential spectrum is confined to the shaded area in Figure \ref{fig-ess}, the boundary of which is the union of an arc of parabola and two half-lines. (The origin $\l = 0$ is an eigenvalue, associated with eigenfunction $(\bar U^\e)';$ the existence of bifurcation eigenvalues $\g(\e) \pm i \t(\e)$ is assumed in Theorem \ref{PH}, the proof of which is given in Section \ref{bifsection}.)
 
\begin{figure}[t]
\begin{center}
\scalebox{.4}{\input{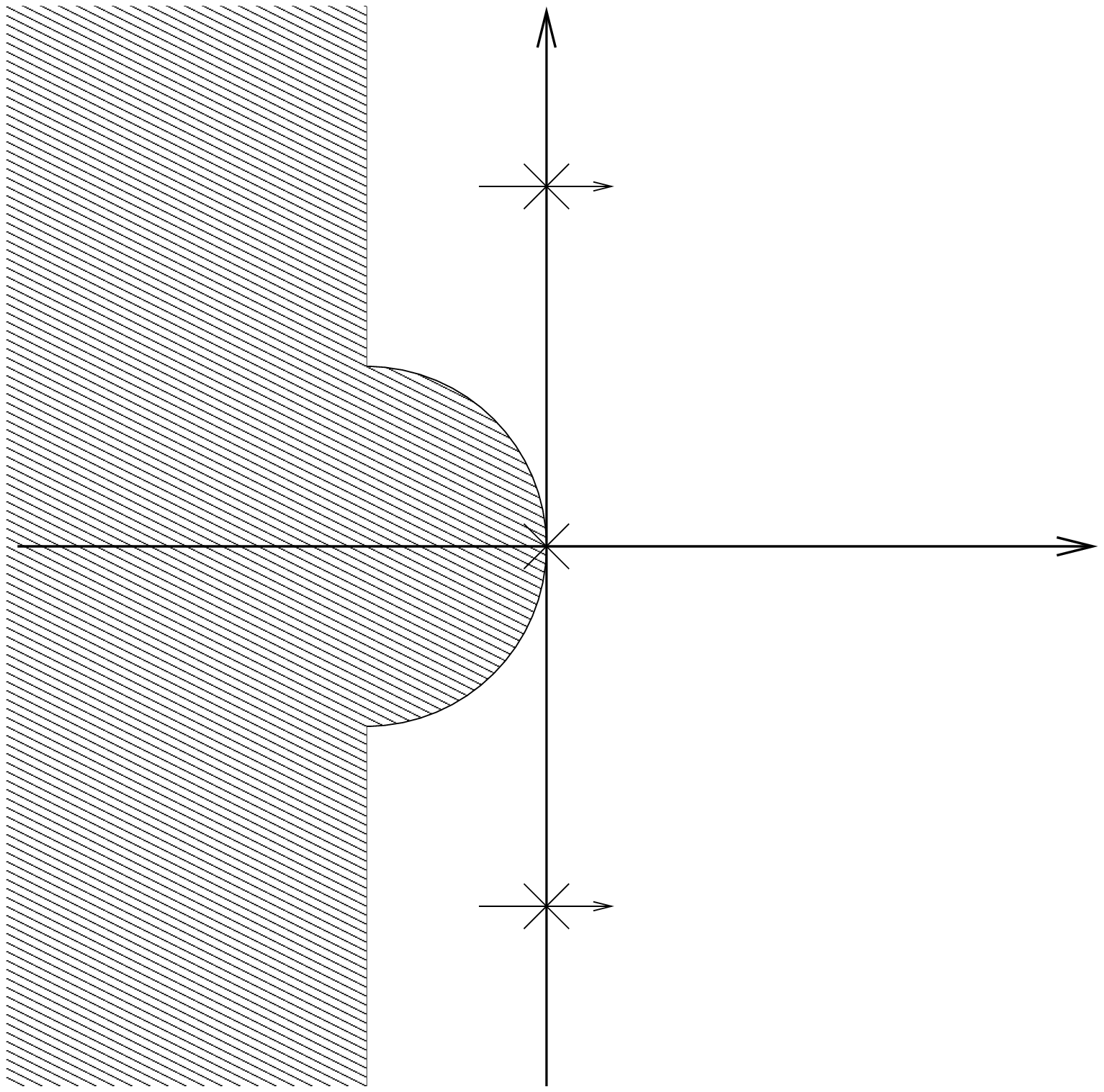}}
\caption{Spectrum of $L(\e).$}
\label{fig-ess}
\end{center}
\end{figure}

 \begin{rem} \label{ee} The essential spectrum, as given by Definition {\rm \ref{s_ess}}, is not stable under relatively compact perturbations (see {\rm \cite{EE}}, Chapter 4, Example 2.2); namely, a domain of the complement of the essential spectrum of a (closed, densely defined) operator $T$ is either a subset of the complement of the essential spectrum of $T + S,$ or is filled with point spectrum of $T + S,$ where $S$ is a relatively compact perturbation of $T.$
 \end{rem}
 
 \begin{rem} By the Fr\'echet-Kolmogorov theorem, $L$ is a relatively compact perturbation of $L_\pm.$ (This observation is the first step of the proof of Lemma {\rm \ref{loc-s-ess}}, see Henry {\rm \cite{He}.}) The pathology described in Remark {\rm \ref{ee}} does not occur in the right half-plane here, as we know by an energy estimate that if $\l$ is large and real, $\l \notin \s_p(L).$ 
 \end{rem}

\subsubsection{Gap Lemma and dual basis}

 Let $\L$ be the connected component of $\C \setminus \big( {\cal C}_- \cup {\cal C}_+\big)$ containing real $+\infty.$
 
\begin{defi}[Stable and unstable subspaces at $\pm \infty$] \label{stable-unstable-sp} Given $\l \in \L \cup B(0,r),$ $r$ as in Lemma {\rm \ref{low-f}}, denote by $S(\tilde \mA_\pm(\l))$ the stable subspace of $\tilde \mA_\pm(\l)$ (i.e., the subspace of generalized eigenvectors associated with eigenvalues with negative real parts) and by $U(\tilde \mA_\pm(\l))$ the unstable subspace of $\tilde \mA_\pm$ (i.e., the subspace of generalized eigenvectors associated with eigenvalues with positive real parts). We define similarly $S(\mA_\pm(\l))$ and $U(\mA_\pm(\l)).$
\end{defi}

 By definition of ${\cal C}_\pm,$ given $\l \in \L,$ the matrices $\mA_\pm(\l)$ do not have purely imaginary eigenvalues, so that $S(\mA_\pm(\l)) \oplus U(\mA_\pm(\l)) = \C^7,$ and $S(\tilde \mA_\pm(\l)) \oplus U(\tilde \mA_\pm(\l)) = \C^7,$ for all $\l \in \L.$ 
 
\begin{lem} \label{anal-bases} The vector spaces $S(\tilde \mA_\pm(\l))$ and $U(\tilde \mA_\pm(\l))$ have analytic bases in $\L.$ 
\end{lem}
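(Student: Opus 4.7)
By the defining property of $\Lambda$, for every $\lambda \in \Lambda$ the matrices $\tilde\mA_\pm(\lambda)$ have no purely imaginary eigenvalues, so the spectrum of $\tilde\mA_\pm(\lambda)$ splits into a disjoint union of a ``stable'' and an ``unstable'' part lying strictly in the left and right open half-planes respectively. The first step is therefore to introduce the Dunford--Riesz spectral projections
\[
P^{s,u}_{\pm}(\lambda) \;:=\; \frac{1}{2\pi i}\oint_{\gamma^{s,u}(\lambda)} (\zeta I-\tilde\mA_\pm(\lambda))^{-1}\,d\zeta,
\]
where $\gamma^{s}(\lambda)$ (resp.\ $\gamma^{u}(\lambda)$) is a positively oriented contour encircling the eigenvalues of $\tilde\mA_\pm(\lambda)$ of negative (resp.\ positive) real part. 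Since $\tilde\mA_\pm$ depends analytically (indeed polynomially) on $\lambda$, a standard compactness/continuity argument shows that one may locally choose the contours $\gamma^{s,u}$ to be independent of $\lambda$, so that $P^{s,u}_\pm(\lambda)$ is holomorphic in $\lambda$ on $\Lambda$. Their ranges are exactly $S(\tilde\mA_\pm(\lambda))$ and $U(\tilde\mA_\pm(\lambda))$.

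Next, because $\Lambda$ is connected and the rank of an analytic family of projections is locally constant, $\dim S(\tilde\mA_\pm(\lambda))$ and $\dim U(\tilde\mA_\pm(\lambda))$ are constant on $\Lambda$; call them $d^s_\pm$ and $d^u_\pm$. Hence $\lambda\mapsto \mathrm{Range}\,P^s_\pm(\lambda)$ and $\lambda\mapsto \mathrm{Range}\,P^u_\pm(\lambda)$ define holomorphic subbundles $E^{s}_\pm,E^{u}_\pm$ of the trivial rank-$7$ bundle $\Lambda\times\CC^{7}\to\Lambda$, of ranks $d^{s,u}_\pm$. Producing analytic bases of $S(\tilde\mA_\pm(\lambda))$ and $U(\tilde\mA_\pm(\lambda))$ on all of $\Lambda$ is now equivalent to producing global holomorphic frames of $E^{s}_\pm$ and $E^{u}_\pm$.

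The remaining step, and the main (topological) obstacle, is that $\Lambda$ need not be simply connected, so Kato's local construction (\cite{Kat}, II.4.2) does not immediately globalize. To overcome this, invoke that $\Lambda$, being an open subset of $\CC$, is a non-compact Riemann surface and in particular a Stein manifold of complex dimension one; by the theorem of Grauert--R\"ohrl (the Oka principle for vector bundles on open Riemann surfaces), every holomorphic vector bundle over such a surface is holomorphically trivial. Applied to $E^{s}_\pm$ and $E^{u}_\pm$, this produces global holomorphic frames $\{\tilde V^\pm_j(\lambda)\}_{j\in\mathcal J^{s,u}_\pm}$ spanning $S(\tilde\mA_\pm(\lambda))$ and $U(\tilde\mA_\pm(\lambda))$ analytically on all of $\Lambda$, which is precisely the claim. (Alternatively, since we only need this for $\lambda$ in a neighborhood of the real axis for large positive real part, joined to $B(0,r)$ along simply connected paths, Kato's analytic continuation along arcs yields the same conclusion on the subdomain of $\Lambda$ actually used in the subsequent stability analysis, bypassing the Oka principle entirely.)
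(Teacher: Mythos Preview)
Your argument is correct, but it takes a considerably heavier route than the paper's. The paper's proof is one line: $\Lambda$ \emph{is} simply connected (it is the unbounded component to the right of the essential-spectrum curves described in Section~\ref{sec:ess}, whose boundary is the union of an arc of parabola and two half-lines; see Figure~\ref{fig-ess}), so Kato's construction in \cite{Kat}, II.4 globalizes immediately and yields analytic bases of $S(\tilde\mA_\pm(\lambda))$ and $U(\tilde\mA_\pm(\lambda))$ on all of $\Lambda$. Your explicit worry that ``$\Lambda$ need not be simply connected'' is therefore unfounded in this setting, and the Oka--Grauert--R\"ohrl machinery, while a valid and more general substitute, is not needed.

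That said, what you gain is robustness: your argument would survive in situations where the complement of the essential spectrum is multiply connected (which can happen for other classes of operators), whereas the paper's argument would not. Your alternative remark at the end, restricting to a simply connected subdomain containing the contours actually used downstream, is closer in spirit to the paper's approach and would already suffice here---but once you observe that $\Lambda$ itself is simply connected, even that workaround is unnecessary.
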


\begin{proof} By simple-connectedness of $\L,$ the Lemma follows from a result of Kato (\cite{Kat}, II.4), that uses spectral separation in $\Lambda.$
 \end{proof}

\begin{cor} \label{continuation} Equations \eqref{dual-limit-ode} have analytic bases of solutions in $\L.$ 
\end{cor}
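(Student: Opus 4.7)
The plan is to read the corollary as a direct translation of Lemma \ref{anal-bases} into the language of ODE solutions, via the matrix exponential flow of a linear system with analytic coefficients. Since $\L$ is, by definition, contained in the complement of ${\cal C}_- \cup {\cal C}_+,$ no eigenvalue of $\tilde \mA_\pm(\l)$ touches the imaginary axis for $\l \in \L,$ so the direct-sum decomposition $\CC^7 = S(\tilde \mA_\pm(\l)) \oplus U(\tilde \mA_\pm(\l))$ holds throughout $\L.$

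Concretely, by Lemma \ref{anal-bases}, I would fix analytic bases $\{L_j^{\pm,S}(\l)\}$ and $\{L_j^{\pm,U}(\l)\}$ of $S(\tilde \mA_\pm(\l))$ and $U(\tilde \mA_\pm(\l))$ on $\L,$ and concatenate them into an analytic family $\{L_j^\pm(\l)\}_{1 \le j \le 7}$ of bases of $\CC^7.$ Setting
$$\tilde V_j^\pm(y, \l) := e^{y \tilde \mA_\pm(\l)} L_j^\pm(\l),$$
one gets a solution of \eqref{dual-limit-ode} with initial value $L_j^\pm(\l)$ at $y = 0.$ Analyticity of $\l \mapsto \tilde V_j^\pm(y, \l)$ follows from the fact that $\l \mapsto \tilde \mA_\pm(\l)$ is entire (it is polynomial), so the matrix exponential $\l \mapsto e^{y \tilde \mA_\pm(\l)}$ is analytic for each fixed $y,$ and $L_j^\pm$ itself is analytic in $\l.$ Linear independence at $y = 0$ is preserved for all $y$ by uniqueness of the linear Cauchy problem, so $\{\tilde V_j^\pm(\cdot, \l)\}_{1 \le j \le 7}$ is an analytic basis of solutions of \eqref{dual-limit-ode} on $\L.$

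There is no real obstacle here: the substance lies entirely in Lemma \ref{anal-bases}, itself an application of Kato's perturbation theorem on the simply connected domain $\L$ where the spectral separation of stable and unstable eigenvalues of $\tilde \mA_\pm$ is uniform. In particular, the Gap Lemma alluded to in the section heading is \emph{not} needed for the corollary as stated; it would become relevant only if one wished to continue such bases across curves of eigenvalue crossings, for example across the boundary $\partial \L$ into the essential spectrum region, a construction that is used elsewhere in the low-frequency analysis but is not required here.
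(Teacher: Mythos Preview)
Your proposal is correct and follows essentially the same route as the paper: take the analytic bases of $S(\tilde \mA_\pm)$ and $U(\tilde \mA_\pm)$ supplied by Lemma~\ref{anal-bases}, and push them forward by the flow of \eqref{dual-limit-ode}, which is analytic in $\l$ because $\tilde \mA_\pm$ is. Your additional remarks (explicit matrix exponential, the observation that the Gap Lemma is not needed here) are accurate elaborations but do not change the argument.
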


\begin{proof} Basis elements of the stable and unstable spaces defined in Definition \ref{stable-unstable-sp} are associated, through the flow of \eqref{dual-limit-ode}, with bases of solutions of \eqref{dual-limit-ode}. The matrices $\tilde \mA_\pm$ depending analytically on $\l,$ the flow of \eqref{dual-limit-ode} is analytic in $\l.$\end{proof}

 \begin{lem} \label{dcs} For $\l$ real and large, $\dim S(\mA_+(\l)) = \dim S(\mA_-(\l)) = 3.$ 
 \end{lem}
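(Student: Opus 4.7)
The plan is to observe that, along the positive real axis, no eigenvalue of $\mA_\pm(\l)$ can cross the imaginary axis, so that $\dim S(\mA_\pm(\l))$ is constant for $\l>0$. Consequently it suffices to compute this dimension at one convenient value---I would take $\l>0$ small, where the low-frequency expansions of Section \ref{sec:low-f} give explicit signs of all seven roots---and the answer there automatically gives the answer for $\l$ real and large.

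To justify the constancy step, suppose $\mA_\pm(\l)$ had a purely imaginary eigenvalue $i\xi$ at some $\l>0$. Then by Lemma \ref{corresp2}(i), the identity $\det(-i\xi A_\pm-\xi^2 B_\pm+G_\pm-\l)=0$ would place $\l$ in ${\cal C}_\pm$; but the Kawashima condition (H3) confines ${\cal C}_\pm$ to the closed left half-plane, a contradiction. Since the eigenvalues of $\mA_\pm(\l)$ depend continuously on $\l$, the number of eigenvalues strictly in the open left half-plane is then locally constant on $(0,+\infty)$, hence globally constant there.

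For the count at small $\l>0$, I would read off the expansions \eqref{DAS-r-slow-4+}, \eqref{fluid-modes}, \eqref{other-mode} directly. At $-\infty$, the Lax inequality $\s_->s>0$ forces $\mu_3^-\sim\l/(s-\s_-)<0$ while $\mu_1^-,\mu_2^->0$; combined with $\mu_4^->0$, $\mu_5^-<0$, $\mu_6^-<0$, $\mu_7^->0$ from \eqref{DAS-r-slow-4+} and \eqref{other-mode}, the stable set at $-\infty$ is $\{\mu_3^-,\mu_5^-,\mu_6^-\}$, of cardinality three. At $+\infty$, the Lax inequality $s>\s_+$ instead makes $\mu_1^+,\mu_2^+,\mu_3^+$ all positive; together with $\mu_4^+\sim\l/s>0$ and $\mu_5^+,\mu_6^+,\mu_7^+<0$, the stable set at $+\infty$ is $\{\mu_5^+,\mu_6^+,\mu_7^+\}$, again of cardinality three. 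I anticipate no real obstacle: the only non-routine input is the Kawashima confinement of ${\cal C}_\pm$ to the closed left half-plane, which rules out any crossing of the imaginary axis in a single line.
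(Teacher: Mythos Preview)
Your argument is correct, but it proceeds by a different route than the paper. The paper works directly at large real $\l$: using Lemma~\ref{corresp2}(i), it expands the eigenvalues of $-\mu A_\pm+\mu^2 B_\pm+G_\pm$ for large $\mu$ as one hyperbolic branch $\l\sim s\mu$ and three parabolic branches $\l\sim c_j\mu^2$ with $c_j>0$, then inverts these to see that for each large real $\l$ the hyperbolic branch yields a single positive root while each parabolic branch yields one positive and one negative root, giving exactly three stable eigenvalues on either side. Your approach instead fixes the count at small $\l>0$ using the low-frequency expansions already recorded in Section~\ref{sec:low-f}, and propagates it to large $\l$ by the homotopy observation that (H3) confines $\mathcal{C}_\pm$ to $\{\Re\l\le 0\}$, forbidding any purely imaginary eigenvalue of $\mA_\pm(\l)$ along $\l>0$. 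The paper's argument is self-contained at large $\l$ and does not invoke the low-frequency machinery; yours recycles work already done in Section~\ref{sec:low-f} at the cost of the extra continuity step and an appeal to (H3). Both are sound and of comparable length.
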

 
 \begin{proof} From Lemma \ref{corresp2}, $\mu$ is an eigenvalue of $\mA_\pm(\l)$ if and only if $\l$ is an eigenvalue of $ - \mu A_\pm + \mu^2 B_\pm + G_\pm.$ As in Section \ref{sec:ess}, for large $\mu,$ the eigenvalues of $ - \mu A_\pm + \mu^2 B_\pm + G_\pm$ are
 $s \mu + O(1)$ (hyperbolic mode) and $\nu \t_\pm^{-1} \mu^2 + O(\mu),$ $\kappa \t^{-1}_\pm c^{-1} \mu^2 + O(\mu),$ $d \mu^2 + O(\mu)$ (parabolic modes), $c^{-1}$ as in Assumption \ref{idealgas}. Inversion of these expansions gives three stable eigenvalues for both $\mA_-$ and $\mA_+.$ 
  \end{proof}
  
 \begin{rem} The above Lemma implies in particular that $\Lambda$ is a domain of consistent splitting, as defined in {\rm \cite{AGJ}.} (See also Section 3.1 of {\rm \cite{LyZ2}.})
 \end{rem}

 Given $\l \in \L,$ the flow of \eqref{dual-limit-ode} associates basis elements of $S(\tilde \mA_+(\l))$ with solutions of \eqref{dual-limit-ode} which are exponentially decaying as $t \to +\infty,$ and basis elements of $U(\tilde \mA_-(\l)$ with solutions which are exponentially decaying as $t \to -\infty.$ Similarly, the spaces $S(\tilde \mA_-(\l))$ and $U(\tilde \mA_+(\l)$ are associated with exponentially growing solutions, at $-\infty$ and $+\infty$ respectively.

\begin{defi}[Decaying and growing normal modes]  \label{extension} We call decaying dual normal mode at $\pm\infty$ any continuous family of dual normal modes $\{ \l, \tilde U(\l) \},$ $\l \in B(0,r),$ $r$ as in Lemma {\rm \ref{low-f}}, such that for all $\l \in \L \cap B(0,r),$ $\tilde U(\l)$ corresponds to a decaying solution of \eqref{dual-limit-ode} at $\pm \infty.$ Families of normal modes which are not decaying are growing. We define similarly decaying dual normal modes and growing dual normal modes.
\end{defi}

 By continuity of the eigenvalues and spectral separation in $\L,$ if for some $\l \in \L$ a continuous family of normal modes corresponds to a decaying (resp. growing) solution, then it corresponds for all $\l \in \L$ to a decaying (resp. growing) solution.

 By \eqref{Lax}, \eqref{DAS-r-slow-4+} and \eqref{fluid-modes}, $\mu_{1}^+,$ $\mu_{2}^+,$ $\mu^+_{3}$ and $\mu^+_{4}$ are growing (in the sense of Definition \ref{extension}) at $+\infty,$ while $\mu_{5}^+,$ $\mu_{6}^+$ and $\mu_{7}^+$ are decaying.
 
 Similarly, $\mu_{3}^-,$ $\mu_{5}^-$ and $\mu_{6}^-$ are growing, while $\mu_{1}^-,$ $\mu_{2}^-,$ $\mu_{4}^-$ and $\mu_7^-$ are decaying.

 The normal modes with which the characteristics of \eqref{rNS} are associated are pictured on Figures \ref{fig3} and \ref{fig4}. In particular, slow normal modes associated with incoming characteristics are growing.

\begin{figure}[t]
\begin{center}
\scalebox{.5}{\input{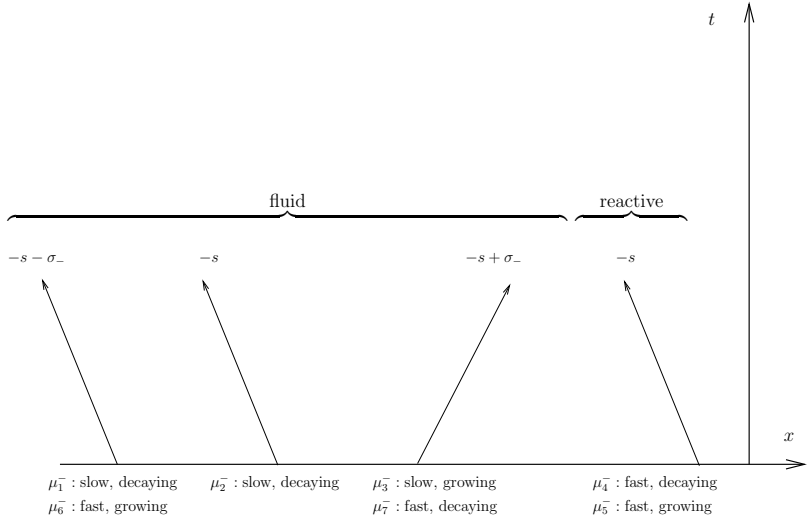}}
\caption{Normal modes on the $-\infty$ side.}
\label{fig4}
\end{center}
\end{figure}

\begin{figure}[t]
\begin{center}
\scalebox{.5}{\input{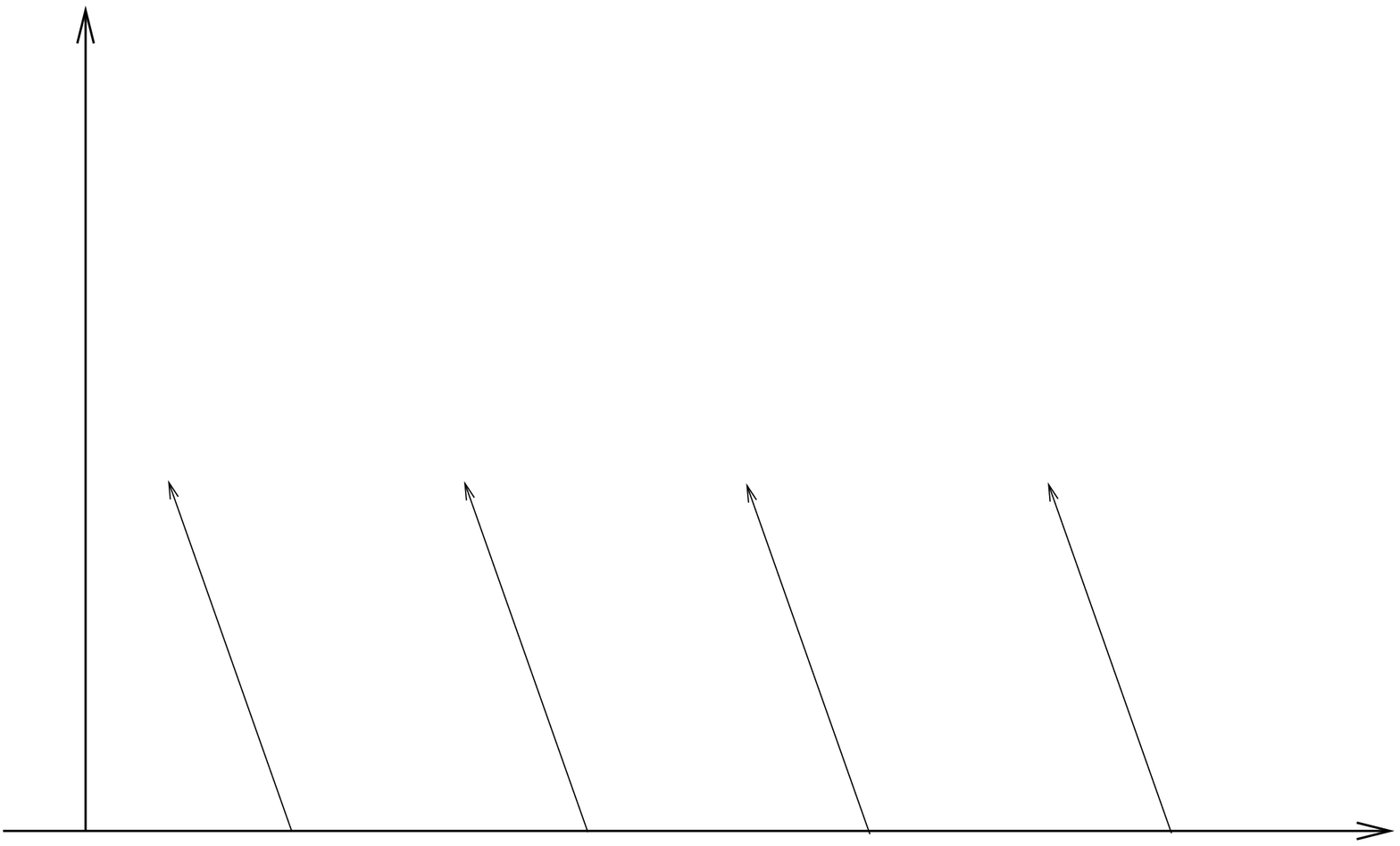}}
\caption{Normal modes on the $+\infty$ side.}
\label{fig3}
\end{center}
\end{figure}

\begin{defi}[Normal residuals] \label{norm-res} A map $(y,\l) \to \Theta^+(y,\l) \in \C^{7}$ defined on $[y_0,+\infty) \times B(0,r),$ for some $y_0 > 0,$ $r > 0,$ is said to belong to the class of normal residuals if it satisfies the estimates
 $$ |\Theta^+| \leq C , \qquad | \d_y \Theta^+| \leq C (|\l| + e^{- \theta |y|}).$$
  for some $\theta > 0$ and $C >0,$ uniformly in $y \geq y_0$ and $\l \in B(0,r).$  \end{defi} 

  We define similarly the class of normal residuals on $(-\infty,-y_0) \times B(0,r).$

\begin{lem}[Fast dual modes] \label{fast-modes} Equations \eqref{ode-dual} has solutions $$\tilde W^-_4, \tilde W_{5}^+, \tilde W_{6}^+, \tilde W_7^+ \quad \mbox{(growing)} \qquad \mbox{and} \quad \tilde W^-_5, \tilde W^-_6, \tilde W^-_7 \quad \mbox{(decaying)},$$
 which for $\l \in B(0,r),$ $r$ possibly smaller than in Lemma {\rm \ref{low-f},} satisfy   
     \begin{equation} \label{fastmodes}  \tilde W_j^\pm = e^{-y \mu_j^\pm(\l)} \big( L_j^\pm(0) +  e^{-\theta |y|} \tilde \Theta_{1j}^\pm + \l \tilde \Theta_{2j}^\pm\big), \qquad y \gtrless \pm y_0,\end{equation}
  for some $y_0 > 0$ independent of $\l,$ where the constant vectors $L_j^\pm(0)$ are defined in Section {\rm \ref{sec:low-f},} and $\tilde \Theta_{1j}^\pm,$ $\tilde \Theta_{2j}^\pm$ are normal residuals in the sense of Definition {\rm \ref{norm-res}.} 
  \end{lem}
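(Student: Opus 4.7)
The plan is to construct each $\tilde W_j^\pm$ via the Gap Lemma machinery of \cite{GZ, ZH, MaZ3}, exploiting the fact that for the fast modes listed in the statement the spectral separation from all other modes of $\tilde \mA_\pm(\l)$ is uniform on a small enough ball $B(0,r)$. Concretely, Corollary \ref{profdecay} gives $|\tilde \mA(y,\l)-\tilde \mA_+(\l)|\le C e^{-\eta_0 y}$ for $y\ge 0$ (and analogously at $-\infty$), uniformly for $\l$ in compact sets; this is the exponential convergence input. The spectral input is that each of $\mu_4^-,\mu_5^\pm,\mu_6^\pm,\mu_7^\pm$ has nonzero real part at $\l=0$ by \eqref{DAS-r-slow-4+}, \eqref{other-mode}, \eqref{r5}, so by continuity it remains bounded away from the slow cluster and from the other fast eigenvalues for $r$ small, and the associated eigenvector $L_j^\pm(\l)$ extends analytically from its value $L_j^\pm(0)$ computed in Section \ref{sec:low-f}.

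The construction proceeds by the standard substitution $\tilde W_j^\pm(y,\l) = e^{-y\mu_j^\pm(\l)}\bigl(L_j^\pm(\l)+Z(y,\l)\bigr)$. Inserting into \eqref{ode-dual} yields
\begin{equation}
Z' = \bigl(\tilde \mA(y,\l)+\mu_j^\pm(\l)\Id\bigr)Z + \bigl(\tilde \mA(y,\l)-\tilde \mA_\pm(\l)\bigr)L_j^\pm(\l),
\nn\end{equation}
to be solved on the appropriate half-line with $Z\to 0$ at $\pm\infty$. Since $\mu_j^\pm(0)\ne 0$ is spectrally isolated from the other eigenvalues of $\tilde \mA_\pm(0)$, the matrix $\tilde \mA_\pm(\l)+\mu_j^\pm(\l)\Id$ has a one-dimensional kernel (spanned by $L_j^\pm(\l)$) and its remaining spectrum stays uniformly bounded away from $i\R$ for $\l\in B(0,r)$. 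This yields a bounded Green kernel on the complement of the kernel direction; combined with the exponentially decaying source $(\tilde \mA-\tilde \mA_\pm)L_j^\pm$, a standard contraction-mapping argument produces a unique $Z$ satisfying $|Z(y,\l)|\le C e^{-\theta|y|}$ for some $0<\theta<\eta_0$ depending on the spectral gap.

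To extract the specific decomposition \eqref{fastmodes}, I would split $Z$ into its value at $\l=0$ and its $\l$-increment using analyticity of $\mu_j^\pm$, $L_j^\pm$, and of the Green kernel in $\l$: writing
\begin{equation}
Z(y,\l) = Z(y,0) + \l\,\widetilde Z(y,\l),\qquad L_j^\pm(\l)=L_j^\pm(0)+\l\, L_j^{\pm,1}(\l),
\nn\end{equation}
one obtains the form claimed with
$\tilde \Theta_{1j}^\pm(y,\l) := e^{\theta|y|}Z(y,0)$ bounded, and
$\tilde \Theta_{2j}^\pm(y,\l) := \widetilde Z(y,\l)+L_j^{\pm,1}(\l)$ bounded in $B(0,r)$.

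The main obstacle is the normal residual derivative bound $|\d_y\tilde \Theta|\le C(|\l|+e^{-\theta|y|})$, which is sharper than what one reads off naively. The trick is that $\d_y Z(y,0)$ satisfies, at $\l=0$, the same kind of equation but with source $(\tilde \mA(y,0)-\tilde \mA_\pm(0))L_j^\pm(0)$ whose derivative in $y$ is itself $O(e^{-\eta_0|y|})$; hence $\d_y(e^{\theta|y|}Z(y,0))$ inherits exponential decay, giving the $e^{-\theta|y|}$ part of the bound. The $|\l|$ part comes from the $\l$-increment piece: differentiating the fixed-point equation for $\widetilde Z$ in $y$ produces a bounded expression, which then contributes to $\d_y\tilde \Theta_{2j}^\pm$ only after the factor $\l$ in the splitting is accounted for, matching the required bound.
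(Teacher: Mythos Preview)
Your proposal is correct and takes essentially the same approach as the paper: the paper's proof is a one-line invocation of the Gap Lemma (Proposition 9.1 of \cite{MaZ3}), and what you have written is precisely an unpacking of that lemma's proof in this context.

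Two minor points of precision. First, your claim that each fast $\mu_j^\pm(0)$ is ``spectrally isolated from the other eigenvalues of $\tilde\mA_\pm(0)$'' is not obviously true across the reactive/fluid blocks (nothing rules out, say, $\mu_5^+=\gamma_6^+$); what is true, and sufficient, is analyticity of $L_j^\pm(\l)$, which comes from the block-triangular structure together with separation \emph{within} each block, as established in Lemma \ref{low-f}. Second, in your construction $\tilde\Theta_{1j}^\pm:=e^{\theta|y|}Z(y,0)$, the $\theta$ appearing in the statement must be taken strictly smaller than the decay rate $\theta'$ you obtain for $Z$ from the contraction argument (which in turn is $<\eta_0$); otherwise $\partial_y\tilde\Theta_{1j}^\pm$ is merely bounded rather than $O(e^{-\theta|y|})$, violating Definition \ref{norm-res}. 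With $\theta<\theta'$ the derivative bound follows immediately, and your more elaborate ``trick'' paragraph is unnecessary.
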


 \begin{proof} With the description of the normal modes in Lemma \ref{low-f}, this is a direct application of the Gap Lemma (for instance in the form of Proposition 9.1 of \cite{MaZ3}).
 \end{proof}

\begin{lem}[Slow dual modes] \label{slow-modes} Equation \eqref{ode-dual} has solutions 
 $$\tilde W_{1}^-, \tilde W_{2}^- \quad \mbox{(growing)} \qquad \mbox{and} \quad \tilde W_{3}^-,\tilde W_{1}^+, \tilde W_2^+, \tilde W_3^+, \tilde W_4^+ \quad \mbox{(decaying)},$$
 which for $\l \in B(0,r),$ $r$ possibly smaller than in Lemma {\rm \ref{low-f}}, satisfy 
  \begin{equation} \label{gap-l+}
   \tilde W_{j}^\pm = e^{- y \mu_{j}^\pm(\l)} \big(L_j^{\pm}(0) + \l \tilde \Theta^{\pm}_j\big), \quad y \gtrless  \pm y_0,
  \end{equation}
 for some $y_0 > 0$ independent of $\l,$ where the constant vectors $L_j^\pm(0)$ are defined in Section {\rm \ref{sec:low-f},} and $\tilde \Theta_j^\pm$ are normal residuals. 
  \end{lem}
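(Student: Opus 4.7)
The plan is to apply the Gap Lemma to \eqref{ode-dual} exactly as in the proof of Lemma~\ref{fast-modes} and then to exploit the structural fact that, at $\l=0$, each slow-mode limit $L_j^\pm(0)$ is not merely an approximate but an \emph{exact} bounded solution of \eqref{ode-dual} on the appropriate half-line $\{y\gtrless \pm y_0\}$. This is what accounts for the absence of an $e^{-\theta|y|}$ contribution to the leading coefficient in \eqref{gap-l+}, in contrast to \eqref{fastmodes}. Concretely, I would first invoke the Gap Lemma (Proposition~9.1 of \cite{MaZ3}) to produce, for each slow index $j$ and each sign $\pm$, a solution $\tilde W_j^\pm$ of \eqref{ode-dual} analytic in $\l\in B(0,r)$ of the form
\[
\tilde W_j^\pm(y,\l)=e^{-y\mu_j^\pm(\l)}\bigl(L_j^\pm(\l)+R_j^\pm(y,\l)\bigr),\qquad |R_j^\pm(y,\l)|\le Ce^{-\theta|y|},
\]
with constants uniform in $\l$; the required hypotheses---analyticity of $\mu_j^\pm$ and $L_j^\pm$ at $\l=0$ (Section~\ref{sec:low-f}) and exponential convergence $|\tilde{\mA}(y,\l)-\tilde{\mA}_\pm(\l)|=O(e^{-\theta|y|})$ (Corollary~\ref{profdecay})---are both in place.

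Next I would verify that $R_j^\pm(y,0)\equiv 0$ on the appropriate half-line, equivalently that $\tilde W_j^\pm(y,0)$ reduces there to the constant $L_j^\pm(0)$. For this, observe that for a constant test function $\tilde U$, all derivative terms in $L^*\tilde U$ vanish, and the equation $L^*\tilde U=0$ reduces pointwise to $(\d_U G)^{\tt tr}(\bar U^\e(y))\,\tilde U=0$. The source $G=(0,0,0,-k\phi(T)z)^{\tt tr}$ has $(\d_U G)^{\tt tr}$ with only its fourth column nonzero. For the fluid slow modes $j=1,2,3$, the fourth ($z$) component of $\ell_j^\pm$ vanishes by \eqref{lj}, so $(\d_U G)^{\tt tr}\ell_j^\pm=0$ at every $y$; for the reactive slow mode $j=4$ on the $+\infty$ side, $T_+^\e<T_i$ combined with Assumption~\ref{ignition} permits choosing $y_0$ so that $\phi(\bar T^\e(y))=\phi'(\bar T^\e(y))=0$ on $\{y\ge y_0\}$, whereupon a direct computation using the ideal gas law shows that the entire fourth row of $\d_U G$ vanishes on that region. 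Hence the inhomogeneous forcing in the fixed-point equation underlying the Gap Lemma vanishes identically at $\l=0$ on the appropriate half-line, and uniqueness of that fixed-point solution forces $R_j^\pm(y,0)=0$ there.

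Finally, I would match to the claimed expansion. Analyticity of $R_j^\pm$ in $\l$, the vanishing $R_j^\pm(y,0)=0$, and the uniform bound $|R_j^\pm|\le Ce^{-\theta|y|}$ yield, via Cauchy's formula in $\l$, a factorization $R_j^\pm(y,\l)=\l S_j^\pm(y,\l)$ with $|S_j^\pm|+|\d_y S_j^\pm|\le Ce^{-\theta|y|}$; similarly $L_j^\pm(\l)-L_j^\pm(0)=\l\tilde L_j^\pm(\l)$ with $\tilde L_j^\pm$ bounded analytic in $\l$. Setting $\tilde\Theta_j^\pm:=\tilde L_j^\pm+S_j^\pm$ gives the decomposition \eqref{gap-l+}, and the normal-residual bounds of Definition~\ref{norm-res} follow (with in fact $|\d_y\tilde\Theta_j^\pm|=O(e^{-\theta|y|})$, stronger than required).

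The principal subtlety is the structural identification in the middle step: that the \emph{$y$-dependent} dual equation admits the constant $L_j^\pm(0)$ as an exact solution on an entire half-line---not merely to leading order at infinity---rests on both the conservative form of the $v$-subsystem of \eqref{rNS-symb} (reflected in the one-column structure of $(\d_U G)^{\tt tr}$) and the support property of the ignition function in the burned-ahead region. Once this is in hand, the remainder of the argument is a standard combination of the Gap Lemma and analytic perturbation theory.
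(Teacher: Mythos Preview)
Your proposal is correct and rests on the same structural observation as the paper's proof---namely, that for each slow index $j$ the constant vector $L_j^\pm(0)$ is an \emph{exact} solution of the variable-coefficient dual system \eqref{ode-dual} at $\l=0$ on the relevant half-line. The technical execution, however, differs. The paper invokes the Conjugation Lemma (\cite{MeZ}; Lemma~3.1 of \cite{MaZ3}) to obtain a global change of variables $\Id+\tilde{\bf\Theta}^+$, asserts abstractly that \eqref{ode-dual} at $\l=0$ admits a four-dimensional space of constant solutions, and then argues by invertibility of the coordinate matrix $(c_{jk})$ and exponential decay of $\tilde{\bf\Theta}^+$ that $(\Id+\tilde{\bf\Theta}^+(\cdot,0))L_j^+(0)$ is itself constant, hence equal to $L_j^+(0)$. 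You instead work mode by mode with the Gap Lemma, explicitly verify via the structure of $(\d_U G)^{\tt tr}$ and the ignition cutoff that the forcing $(\tilde\mA(y,0)-\tilde\mA_\pm(0))L_j^\pm(0)$ vanishes on the half-line, and conclude $R_j^\pm(\cdot,0)\equiv0$ by uniqueness of the contraction fixed point. Your route is more hands-on and makes transparent \emph{why} the constant solutions exist (conservative fluid block; support of $\phi$ for the reactive mode on the $+$ side), whereas the paper's linear-algebra argument via the Conjugation Lemma is slightly more streamlined once the existence of constant solutions is granted. Both yield \eqref{gap-l+} with the same residual class.
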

  
\begin{proof}
 The Conjugation Lemma (\cite{MeZ}; Lemma 3.1 of \cite{MaZ3}) implies that there exists a family of matrix-valued applications $\{\tilde {\bf \Theta}^+(\cdot,\l)\}_{\l \in B(0,r)},$ for some $r > 0$ possibly smaller than in Lemma \ref{low-f}, such that the matrix $\Id + \tilde {\bf \Theta}^+$ is invertible for all $\l$ and $y,$ the application $\tilde {\bf \Theta}^+$ is smooth in $y$ and analytic in $\l,$ with exponential bounds
  $$ | \d_\l^j \d_x^k \tilde {\bf \Theta}^+ | \leq C_{jk} e^{-\theta y}, \qquad \mbox{for some $\theta > 0,$ $C_{jk} > 0,$ for $y \geq y_0,$}$$
 for some $y_0 > 0,$ and such that any solution $\tilde W$ of \eqref{ode-dual} has the form 
 \begin{equation} \label{gap-l} \tilde W = (\Id + \tilde {\bf {\Theta}}^{+}) \tilde V^{+}, \qquad \mbox{for $y \geq y_0,$}
 \end{equation}
where $\tilde V^+$ is a dual normal mode, and, conversely, if $\tilde V^+$ is a dual normal mode, then $\tilde W$ defined by \eqref{gap-l} solves \eqref{ode-dual} on $y \geq y_0.$

 Equation \eqref{ode-dual} at $\l = 0$ has a four-dimensional subspace of constant solutions; let $\{ \tilde W^{0}_j\}_{1 \leq j \leq 4}$ be a generating family. The normal modes with which, through \eqref{gap-l}, the $\tilde W^{0}_j$ are associated are slow normal modes. Hence, by Lemma \ref{low-f}, there exist coordinates $c_{jk}$ such that
 $$
 \tilde W^{0}_j =  (\Id + \tilde {\bf \Theta}^{+}(\cdot,0)) \sum_{1 \leq k \leq 4} c_{jk} L_{k}^+(0), \qquad y \geq y_0,
 $$ 
 which implies in particular that the matrix
 $c := (c_{jk})_{1\leq j,k\leq 4}$ is invertible. Then, for $1 \leq j \leq 4,$
 $$ (\Id + \tilde {\bf \Theta}^{+}(\cdot,0))  L_{j}^+(0) = \sum_{1 \leq k \leq 4} (c^{-1})_{jk} \tilde W^{0}_k,$$
 in particular, $(\Id + \tilde {\bf \Theta}^{+}(\cdot,0)) L_{j}^+(0)$ is constant, hence, by exponential decay of $\tilde {\bf \Theta}^+,$ equal to $L_j^+(0).$ We can conclude that, for $1 \leq j \leq 4,$
  $$ \tilde W^+_j := (\Id + \tilde {\bf {\Theta}}^{+}) \tilde V_j^+$$
  (where $\tilde V_j^+$ is defined in Lemma \ref{low-f}) is a solution of \eqref{ode-dual} on $y \geq y_0,$ which can be put in the form \eqref{gap-l+}.
  
  The proof on the $-\infty$ side is based similarly on the decomposition of the fluid components of the $\tilde W^{0}_j$ onto the (fluid) dual slow modes $\tilde V_j^-,$ for $1 \leq j \leq 3.$ 
 \end{proof}

\subsubsection{Duality relation and forward basis}

 We use the duality relation, introduced in \cite{MaZ3},
  \begin{equation} \label{duality}  \tilde W^{\tt tr} {\cal S} W = 1
  \end{equation}
  that relates solutions $W$ of the forward equation \eqref{ode} with solutions $\tilde W$ of the adjoint equation \eqref{ode-dual} through the conjugation matrix in $(\t,w,bw')$ coordinates
  $$ {\cal S} := \left(\begin{array}{ccc} - A_{11} & -A_{12} & 0 \\ - A_{21} & -A_{22} & \mbox{Id}_{\C^3} \\ 0 & - \mbox{Id}_{\C^3} &  0 \end{array}\right),$$
  where $A$ is the convection matrix defined in \eqref{A}. Namely, $W$ is a solution of \eqref{ode} if and only if it satisfies \eqref{duality} for all solution $\tilde W$ of \eqref{ode-dual}, and conversely $\tilde W$ is a solution of \eqref{ode-dual} if and only if it satisfies \eqref{duality} for all solution $W$ of \eqref{ode}. (See Lemma 4.2, \cite{MaZ3}; note that the reactive term contains no derivative, hence does not play any role here.)
   
 Remark that there exist vectors $r_k^\pm$ such that
   \begin{equation} \label{a} \ell_j^\pm A_\pm r_k^\pm = - \delta_{jk}, \qquad 1 \leq j,k \leq 4.
   \end{equation}
Let $R_k^\pm$ be vectors of the form 
 \begin{equation} \label{r}
  R_k^\pm := \left(\begin{array}{c} r_k^\pm \\ * \end{array}\right) + e^{-\theta} \Theta_{1k}^\pm,
  \end{equation}
  where for $1 \leq k \leq 4,$ $r_k^\pm$ are given by \eqref{r}, and where $\Theta_{1k}^\pm$ are normal residuals. With the notation of Lemmas \ref{fast-modes} and \ref{slow-modes}, let 
  \begin{equation}
 \label{l} \bar L_j^\pm := \left\{ \begin{aligned} L_j^\pm(0) & \qquad \mbox{if $\mu_j^\pm$ is slow}, \\
   L_j^\pm(0) + e^{-\theta|y|} \tilde \Theta_{1j}^\pm & \qquad \mbox{if $\mu_j^\pm$ is fast.} \end{aligned} \right.
\end{equation}

\begin{lem}[Forward and dual basis] \label{fwd-dual} For some $r > 0$ and $y_0 > 0,$ 
  \begin{itemize}
   \item equation \eqref{ode} has analytic bases of solutions $\{ W^\pm_1, \dots, W^\pm_7\}_{\l \in \L \cup B(0,r)},$ for $y \gtrless \pm y_0;$ 
    \item equation \eqref{ode-dual} has analytic bases of solutions $\{\bar W^\pm_1, \dots, \bar W^\pm_7\}_{\l \in \L \cup B(0,r)},$ for $y \gtrless \pm  y_0,$
 \end{itemize}
  such that for $\l \in B(0,r),$ 
  \begin{eqnarray} W_j^\pm & = e^{x \mu_j^\pm(\l)}( R_j^\pm + \l \Theta_j^\pm), \qquad y \gtrless \pm y_0,\label{fwdbis} \\
  \label{dualbis} \bar W_j^\pm & = e^{-y \mu_j^\pm(\l)} (\bar L_j^\pm + \l \bar \Theta_j^\pm), \qquad y \gtrless \pm y_0,
  \end{eqnarray}
 where $R_j^\pm$ and $\bar L_j^\pm$ are defined in \eqref{r} and \eqref{l}, and $\Theta_j^\pm$ and $\bar \Theta_j^\pm$ are normal residuals; the fast forward modes $W_4^-$ and $W_7^+$ satisfy also
  \begin{equation} \label{u'} W_j^\pm(x,\l)  =  \left(\begin{array}{c} (\bar U^\e)'(x) \\ * \end{array}\right) + \l \un {\Theta}_j^\pm(x,\l), \qquad x \gtrless \pm y_0,\end{equation}
  where 
   $|{\un \Theta}_j^\pm| + |\d_x {\un \Theta}_j^\pm| \leq C e^{-\theta |x|},$
  for some $C, \theta > 0,$ uniformly in $\l \in B(0,r).$ \end{lem}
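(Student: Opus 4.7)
The lemma has three ingredients: analytic dual bases with leading form \eqref{dualbis}; analytic forward bases with leading form \eqref{fwdbis}; and identification of the two distinguished modes $W_4^-, W_7^+$ with the profile derivative, as in \eqref{u'}. I take these in turn.

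For the dual bases, Lemmas \ref{fast-modes} and \ref{slow-modes} together produce seven analytic solutions of \eqref{ode-dual} at each infinity on $B(0,r)$: at $+\infty$, the three fast growing modes $\tilde W_5^+, \tilde W_6^+, \tilde W_7^+$ and the four slow decaying modes $\tilde W_1^+, \dots, \tilde W_4^+$; at $-\infty$, the modes $\tilde W_1^-, \tilde W_2^-, \tilde W_4^-$ (growing) and $\tilde W_3^-, \tilde W_5^-, \tilde W_6^-, \tilde W_7^-$ (decaying). Linear independence at $\l=0$ follows from inspection of the leading vectors $L_j^\pm(0)$ described in Section \ref{sec:low-f}, so these seven-tuples are analytic bases for $y \gtrless \pm y_0$. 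Extension to $\Lambda \cup B(0,r)$ is then immediate from Corollary \ref{continuation} combined with analyticity in $\l$ of the variable-coefficient flow of \eqref{ode-dual}.

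For the forward bases, I construct $\{W_j^\pm\}$ as the $\mathcal S$-biorthogonal of $\{\bar W_k^\pm\}$, characterized by $(\bar W_k^\pm)^{\tt tr} \mathcal S W_j^\pm \equiv \delta_{jk}$. The duality identity \eqref{duality} makes this pairing conserved along the flow, so the biorthogonal is well defined on $\Lambda \cup B(0,r)$ as soon as the pairing matrix is nondegenerate at one point and one $\l$. At $\l = 0$, the slow--slow block reduces, via the block form of $\mathcal S$ and the vanishing of the third block of $L_k^\pm(0)$ when $\mu_k^\pm(0) = 0$, to the identity $-\ell_k^\pm A_\pm r_j^\pm = \delta_{jk}$ of \eqref{a}; the fast--fast and mixed blocks are handled analogously using \eqref{vect-n-modes} and invertibility of $b_\pm$. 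The $W_j^\pm$ thus constructed have the required leading form \eqref{fwdbis} with $R_j^\pm$ as in \eqref{r}, and analyticity of the biorthogonal in $\l$ gives the normal-residual bound on $\Theta_j^\pm$.

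For \eqref{u'}, I use translational invariance: $L(\e)(\bar U^\e)' = 0$, so Lemma \ref{corresp} lifts $(\bar U^\e)'$ to a solution $W_\ast$ of \eqref{ode} at $\l=0$, with $|W_\ast(x)| + |W_\ast'(x)| \le C e^{-\eta_0|x|}$ by Corollary \ref{profdecay}. At $\l = 0$ the slow forward modes are nondecaying, so $W_\ast$ must lie, at $x \to -\infty$, in the span of the fast decaying modes $\{W_4^-, W_7^-\}$ and, at $x \to +\infty$, in the span of $\{W_5^+, W_6^+, W_7^+\}$. Redefining $W_4^-$ (respectively $W_7^+$) within this span by an appropriate linear combination, I arrange $W_4^-(\cdot,0) = W_\ast$ on $x \le -y_0$ and $W_7^+(\cdot,0) = W_\ast$ on $x \ge y_0$; analyticity in $\l$ then yields \eqref{u'}. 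The main obstacle is compatibility of this redefinition with the leading form \eqref{fwdbis}: one must check that $(\bar U^\e)'$ has at $-\infty$ slowest decay rate $\mu_4^-(0)$ with leading coefficient proportional to $r_4^-$, and analogously $\mu_7^+(0)$ and $r_7^+$ at $+\infty$. This can be read off from the block-triangular linearized traveling-wave ODE \eqref{lin-tw-ode} and the spectral analysis at the endstates already carried out in the proof of Corollary \ref{profdecay}.
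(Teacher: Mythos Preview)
Your approach coincides with the paper's: dual bases from Lemmas \ref{fast-modes}--\ref{slow-modes}, forward bases as their $\mathcal S$-biorthogonal via \eqref{duality}, then identification of the lifted profile derivative with a combination of fast decaying forward modes and a redefinition of $W_4^-, W_7^+$ accordingly.

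One step you leave out: once $W_4^-$ and $W_7^+$ are redefined, the dual family $\{\bar W_k^\pm\}$ of the statement is no longer the original $\{\tilde W_k^\pm\}$ from Lemmas \ref{fast-modes}--\ref{slow-modes}; it must be recomputed as the $\mathcal S$-biorthogonal of the \emph{new} forward basis. The paper does this explicitly in its last line, setting $\bar{\bf W}^{\pm\tt tr}:=(\mathcal S{\bf W}^\pm)^{-1}$, and observes that the slow dual modes are unaffected so that \eqref{dualbis} persists. Your ordering (dual first, forward second, redefinition third) leaves the dual basis out of sync with the final forward basis.

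On the obstacle you flag---that $(\bar U^\e)'$ lies at $-\infty$ in $\operatorname{span}\{W_4^-,W_7^-\}|_{\l=0}$ with leading rate not a priori equal to $\mu_4^-(0)$---you are actually more careful than the paper, whose \eqref{zero-E} records only a single term $c_4^- W_4^{0-}$. The clean resolution is not a direct spectral comparison from \eqref{lin-tw-ode}, which is parameter-dependent, but rather that the $e^{-\theta|x|}$ correction already allowed in $R_j^\pm$ by \eqref{r} absorbs whichever of the two fast contributions decays faster; one then relabels (if necessary) so that the slower rate carries the distinguished index. The same remark applies on the $+\infty$ side, where the paper writes the full combination $\sum_{5\le j\le 7} c_j^+ W_j^{0+}$ and assumes without loss of generality $c_7^+\neq 0$.
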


\begin{proof} Given a family $\{F_1,\dots,F_7\}$ of vectors in $\C^7,$ let $\mbox{col}(F_j)$ denote the $7 \times 7$ matrix $\mbox{col}(F_j) := \left(\begin{array}{ccc} F_1 & \dots & F_7\end{array}\right).$

 Let $y_0,$ $r,$ and $\tilde W_j^\pm$ as in Lemma \ref{fast-modes} and \ref{slow-modes}. For all $\l \in \L  \cup B(0,r),$ the families $\{ \tilde W_1^-,\dots,\tilde W_n^-\}$ and $\{ \tilde W_1^+,\dots,\tilde W_n^+\}$ are bases of solutions of \eqref{ode-dual}, on $y \leq - y_0$ and $y \geq y_0$ respectively. 
In particular, the $7 \times 7$ matrices
 $\tilde {\bf W}^{0\pm} := \mbox{col}(\tilde W_j^\pm)$ are invertible for all $\l \in B(0,r)$ and $y \gtrless  \pm y_0.$ Let
 \begin{equation} \label{def-fwd} {\bf W}^{0\pm} := ((\tilde {\bf W}^{0\pm})^{\tt tr} {\cal S})^{-1} =: \mbox{col}(W^{0\pm}_k).\end{equation}
 For the forward modes $W^{0\pm}_j$ defined in \eqref{def-fwd} to satisfy the low-frequency description  
 \begin{equation} \label{lf-fwd} W_j^{0\pm} = e^{x \mu_j^\pm(\l)}( R_j^{0\pm} + e^{-\theta|x|} \Theta_{1j}^{0\pm} + \l \Theta_{2j}^{0\pm}), \quad y \gtrless \pm y_0,\end{equation}
 where $R_j^{0\pm}$ are constant vectors and $\Theta_{\star j}^{0\pm}$ are normal residuals, it suffices, by \eqref{duality}, that the matrices ${\bf R}^{0\pm} := \mbox{col}(R_j^{0\pm})$ and ${\bf \Theta}_\star^{0\pm} := \mbox{col}(\Theta_{\star j}^{0\pm})$ satisfy
       \begin{eqnarray}
   \label{fwd1} {\bf L}^{\tt tr} {\cal S} {\bf R}^0 & = & \Id_{\C^7}, \\ \label{fwd2}
    ( {\bf L} + e^{-\theta|x|} \tilde {\bf \Theta}_{1} )^{\tt tr} {\cal S} {\bf \Theta}_{1}^0 & = & - \tilde {\bf \Theta}_{1}^{\tt tr} {\cal S} {\bf R}^0, \\ \label{fwd3}
      ({\bf L} + e^{-\theta|x|} \tilde {\bf \Theta}_{1}^0 + \l \tilde {\bf \Theta}_2)^{\tt tr}  {\cal S} {\bf \Theta}_2^0 & = & -\tilde {\bf \Theta}_2^{\tt tr} {\cal S} {\bf R}^0,
     \end{eqnarray}
  where ${\bf L}^\pm := \mbox{col}(L_j^\pm(0))$ and $\tilde {\bf \Theta}_\star^\pm := \mbox{col}(\bar \Theta_{\star j}^\pm)$ appear in the low-frequency description of the $\tilde W_j^\pm.$ In \eqref{fwd1}-\eqref{fwd3}, the $\pm$ exponents are omitted.  
 The matrices ${\bf L}^\pm$ being invertible, \eqref{fwd1} (with $+$ or $-$) has a unique solution, and, for $y_0$ large enough and $r$ small enough, equations \eqref{fwd2} and \eqref{fwd3} have unique solutions in the class of normal residuals.  Note that for $1 \leq j,k \leq 4,$ equation \eqref{fwd1} reduces to \eqref{a}, up to exponentially decaying terms, so that the vectors $R^{0\pm}_k$ have the form \eqref{r}.

 Remark now that $(\bar U^\e)'$ satisfies $L(\e) (\bar U^\e)' = 0,$ and decays at both $-\infty$ and $+\infty,$ hence $(\bar U^\e)'$ is associated with decaying fast normal modes; by Lemma \ref{fast-modes}, there exist constants $c_j^\pm,$ such that
  \begin{equation} \label{zero-E} \left(\begin{array}{c} (\bar U^\e)'(y) \\ * \end{array}\right) = {c_4}^- {W_4^{0-}}_{|\l = 0} = \sum_{5 \leq j \leq 7}  c_j^+ {W_j^{0+}}_{|\l = 0}.\end{equation}
 We may assume, without loss of generality, that $ c_7^+ \neq 0.$ Let now
  \begin{eqnarray} \nonumber
   {\bf W}^- & := & \big(\begin{array}{ccccccc} W_1^{0-} & W_2^{0-} & W_3^{0-} & c_4^- W_4^{0-} & W_5^{0-} & W_6^{0-} & W_7^{0-}\end{array}\big), \\ \nonumber 
   {\bf W}^+ & := & \big(\begin{array}{ccccccc} W_1^{0+} & W_2^{0+} & W_3^{0+} & W_4^{0+} & W_5^{0+} & W_6^{0+} & \sum_{j=5}^7  c_j^+ W_j^{0+} \end{array}\big),
  \end{eqnarray}
  and ${\bf W}^\pm =: \mbox{col}(W_j^\pm).$ These forward modes satisfy \eqref{fwdbis} and \eqref{u'}. Let finally
  $ \bar {\bf W}^{\pm {\tt tr}} := ({\cal S} {\bf W}^\pm)^{-1} =: \mbox{col}(\bar W_j^\pm),$ so that, in particular, the slow modes of $\tilde {\bf W}^{0\pm}$ and $\bar {\bf W}^\pm$ coincide. We can prove as above that the low-frequency description \eqref{lf-fwd} of the forward modes carries over to the dual modes through the duality relation, so that \eqref{dualbis} is satisfied.
\end{proof}

\subsubsection{The resolvent kernel} \label{sec:resker2}

 Let 
  $$L^2(\Omega,{\cal D}'(\R)) := \{ \phi \in {\cal D}'(\Omega \times \R), \quad \mbox{for all $\varphi \in {\cal D}(\R),$ $\langle \phi, \varphi \rangle \in L^2(\Omega)$}\}$$
   A linear continuous operator $T: L^2(\R) \to L^2(\R)$ operates on $L^2(\R,{\cal D}'(\R)),$ by
  $ \langle T \phi, \varphi \rangle := T \langle \phi, \varphi \rangle.$
   Let $\t_{(\cdot)} \delta \in L^2(\R, {\cal D}'(\R))$ be defined by $\langle \t_{x} \delta, \varphi \rangle = \varphi(x),$ for all $x \in \R.$

\begin{defi}[Resolvent kernel] Given $\l$ in the resolvent set of $L(\e),$ define the resolvent kernel ${\cal G}_\l$ of $L(\e)$ as an element of $L^2(\R_x,{\cal D}'(\R_y))$ by  
 $$ {\cal G}_\l := (L(\e) - \l)^{-1} \t_{(\cdot)} \delta.$$
  \end{defi}

 Given $y \in \R,$ let $s_y = \sgn(y),$ and 
 $$ \tilde {\bf D}(y) := \{ j, \quad \mbox{$\tilde \mu_j^{s_y}$ slow and decaying} \},$$
 so that 
 $$\tilde {\bf D}(y) = \{3\}, \quad \mbox{if $y < 0,$} \qquad \tilde {\bf D}(y) = \{1,2,3,4\}, \quad \mbox{if $y > 0.$}$$   
 Given $x,y \in \R,$ let ${\bf D}(x,y)$ be the set of all $(j,k)$ such that for all $x,y,$ for $\Re \l > 0$ and $|\l|$ small enough, $\Re(\mu_j^{s_x} x - \mu_k^{s_y} y) < 0,$ that is, 
 $$ \begin{aligned} {\bf D}(x,y) & = \{ (j,k), \quad \mbox{$\mu_j^{s_x}$ and $\tilde \mu_k^{s_y}$ slow and decaying} \} \\
  & \quad \bigcup\, \{ (j,j), \quad s_x = s_y, \quad |y| < |x|, \quad \mbox{$\mu_j^{s_x}$ slow and decaying}\} \\
  & \quad \bigcup\, \{ (j,j), \quad s_x = s_y, \quad |x| < |y|, \quad \mbox{$\tilde \mu_j^{s_y}$ slow and decaying}\},
   \end{aligned}
   $$
  so that  
 $$
  {\bf D}(x,y) := \left\{\begin{array}{lc} \{ (1,1), (2,2), (1,3), (2,3)\}, & \mbox{$x \leq y \leq 0$}, \\ \{ (1,3), (2,3), (3,3) \}, & \mbox{$y \leq x \leq 0,$} \\ \emptyset, & \mbox{$y \leq 0 \leq x,$} \\ \{ (j,k), \quad 1 \leq j \leq 4, 1 \leq k \leq 2\}, & \mbox{$x \leq 0 \leq y$}, \\ \{ (j,j), \quad 1 \leq j \leq 4\}, & \mbox{$0 \leq x \leq y,$} \\ \emptyset,  & \mbox{$0 \leq y \leq x.$} \end{array}\right.
 $$
 
 Define now the excited term
 $$ {\cal E}_\l(x,y) := \lambda^{-1} (\bar U^\e)'(x) \sum_{j \in \tilde {\bf D}(y)} [c^{0}_{j,s_y}] \ell_{j}^{s_y\tt tr} e^{-y \mu^{s_y}_{j}(\l)},$$
 and the scattered term 
  $$ {\cal S}_\l(x,y) := \sum_{(j,k) \in {\bf D}(x,y)} [c^{j,s_x}_{k,s_y}] r_{j}^{s_x} \ell_{k}^{s_y\tt tr} e^{x \mu_j^{s_x}(\l) - y \mu_k^{s_y}(\l)},$$
  where the vectors $\ell_j^\pm$ are defined in \eqref{l4} and \eqref{lj}, the vectors $r_j^\pm$ are defined in \eqref{a}, and the transmission coefficients $[c^0_{k,\pm}]$ and $[c^{j,\pm}_{k,\pm}]$ are constants.   
   
\begin{prop} \label{G_l} Under \eqref{D}, for $\l \in B(0,r),$ the radius $r$ being possibly smaller than in Lemma {\rm \ref{low-f},} there exist transmission coefficients $[c^0_{k,\pm}]$ and $[c^{j,\pm}_{k,\pm}]$ such that the resolvent kernel decomposes as 
$$ {\cal G}_\lambda = {\cal E}_\lambda + {\cal S}_\lambda + {\cal R}_\l,$$
 where ${\cal R}_\l$ satisfies
$$\begin{aligned} | \d_x^\a \d_y^{\a'} {\cal R}_\l| \leq C e^{-\theta|x-y|} & + C \l^{\a'} e^{-\theta|x|} \sum_{j \in \tilde {\bf D}(y)} e^{-y \mu_j^{s_y}} \\ & + C \big( \l^{1 + \min(\a,\a')} + \l^\a e^{-\theta|x|}\big) \sum_{(j,k) \in {\bf D}(x,y)}  e^{x \mu_k^{s_x} - y \mu_j^{s_y}},
\end{aligned}$$
for $\a \in \{0,1,2,\},$ $\a' \in \{0,1\},$ for some $C, \theta > 0,$ uniformly in $x,y$ and $\l \in B(0,r).$
\end{prop}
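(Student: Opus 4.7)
The approach follows the pointwise Green-function template of \cite{MaZ3, LyZ2}: construct $\mathcal{G}_\lambda$ explicitly from the forward/dual bases of Lemma \ref{fwd-dual} via a matching condition at $x=y$, then read off the decomposition by substituting the low-frequency expansions \eqref{fwdbis}, \eqref{dualbis}. For $\lambda$ in the resolvent set of $L(\epsilon)$ I would represent
$$
\mathcal{G}_\lambda(x,y) = \begin{cases} \sum_{j \in \mathcal{D}_-} W_j^-(x,\lambda)\, d_j^-(y,\lambda)^{\tt tr}, & x<y, \\[2pt] -\sum_{j \in \mathcal{D}_+} W_j^+(x,\lambda)\, d_j^+(y,\lambda)^{\tt tr}, & x>y, \end{cases}
$$
where $\mathcal{D}_\pm$ indexes modes decaying as $x\to\pm\infty$. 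The coefficients $d_j^\pm(y,\lambda)$ are fixed by the jump condition at $x=y$ encoding the delta source, which reduces by Lemma \ref{corresp} to a $7\times 7$ linear system on $\mathbb{C}^7$-valued Cauchy data. Using the duality relation \eqref{duality}, Cramer's rule expresses $d_j^\pm(y,\lambda)$ as a linear combination of decaying dual modes $\bar W_k^\mp(y,\lambda)$ divided by a Wronskian proportional to the Evans function $D(\epsilon,\lambda)$.

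By Proposition \ref{Lyprop} and hypothesis \eqref{D}, shrinking $r$ if necessary ensures that $D(\epsilon,\cdot)$ has a unique zero in $B(0,r)$, a simple one at $\lambda=0$. Hence $d_j^\pm$ admits at worst a simple pole at $\lambda=0$, and extracting it yields the excited contribution $\mathcal{E}_\lambda$. The key observation is that at $\lambda=0$ the fast decaying forward modes $W_4^-$ and $W_7^+$ both reduce, by \eqref{u'}, to the vector $((\bar U^\epsilon)'(x),*)^{\tt tr}$, so the decaying forward bases drop one dimension; this is precisely the mechanism that produces the simple pole. The residue is computed by replacing the degenerate column of the matching matrix by its $\lambda$-derivative and collapsing the Wronskian: the numerator becomes a determinant involving slow decaying dual modes $\bar L_k^{s_y}(0) = L_k^{s_y}(0)$ paired against the fixed forward vector $(\bar U^\epsilon)'(x)$, giving exactly
$$
\mathcal{E}_\lambda(x,y) = \lambda^{-1} (\bar U^\epsilon)'(x) \sum_{j\in\tilde{\mathbf{D}}(y)} [c^0_{j,s_y}]\, \ell_j^{s_y\,{\tt tr}} e^{-y\mu_j^{s_y}(\lambda)},
$$
with $[c^0_{j,s_y}]$ read off from the entries of the cofactor matrix at $\lambda=0$.

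For the remaining regular part, I would substitute \eqref{fwdbis} and \eqref{dualbis} into the Cramer's-rule representation, expanding $R_j^\pm + \lambda\Theta_j^\pm$ and $\bar L_k^\pm + \lambda \bar\Theta_k^\pm$. The rank-one products $r_j^{s_x} \ell_k^{s_y\,{\tt tt}} e^{x\mu_j^{s_x}-y\mu_k^{s_y}}$ that actually exhibit exponential decay in the relevant half-plane as $\Re\lambda\searrow 0$ are exactly those indexed by the slow-decaying pairs $(j,k)\in\mathbf{D}(x,y)$; these constitute the scattered term $\mathcal{S}_\lambda$, with transmission coefficients $[c^{j,s_x}_{k,s_y}]$ again read from cofactors. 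All other contributions go into the residual $\mathcal{R}_\lambda$. The claimed size estimate breaks into three families: pure fast-fast contributions produce the $e^{-\theta|x-y|}$ piece (short-range scattering from the non-slow modes $W_5^\pm,W_6^\pm$ and dual analogues); the regular part of the excited contribution (where one side carries the constant factor $R_j^\pm$ and its $\lambda\Theta$-correction) produces the $\lambda^{\alpha'}e^{-\theta|x|}\sum_{\tilde{\mathbf{D}}(y)} e^{-y\mu_j^{s_y}}$ family; and the scattered $O(\lambda)$-corrections from $\lambda\Theta_j^\pm$ and $\lambda\bar\Theta_k^\pm$ produce the last family. Derivatives in $x$ bring down $\mu_j^{s_x}$ (slow modes: $O(\lambda)$; fast modes: $O(1)$ but with exponential prefactor in $x$), and derivatives in $y$ bring down $\mu_k^{s_y}$ analogously; this is exactly where the exponents $\alpha,\alpha'$ in the bound enter, and it accounts for the asymmetry between $y$-derivatives (which always land on slow modes of size $O(\lambda)$) and $x$-derivatives (which may land on the fixed factor $(\bar U^\epsilon)'$, giving no gain in $\lambda$ but full exponential decay in $x$).

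The main obstacle is the pole extraction in the second step: one must identify which minor of the $7\times 7$ matching matrix produces the $1/\lambda$ factor, verify that the degeneracy is exactly one-dimensional (so the pole is simple, not double), and check that the residue is truly proportional to $(\bar U^\epsilon)'$ rather than merely containing a term of that form. This relies on the combined information that $\lambda=0$ is a simple zero of $D$ (Proposition \ref{Lyprop}), that $(\bar U^\epsilon)'$ is the unique bounded kernel element of $L(\epsilon)$ up to scalar multiples, and on the precise low-frequency normalizations in \eqref{fwdbis}--\eqref{u'}. The bookkeeping of signs, the sets $\tilde{\mathbf{D}}(y)$ and $\mathbf{D}(x,y)$, and the identification of which transmission coefficients appear in each region is then mechanical, case-by-case in the six regions $\{x\le y\le 0\}$, $\{y\le x\le 0\}$, etc., using the growing/decaying classification summarized in Figures \ref{fig3}--\ref{fig4}.
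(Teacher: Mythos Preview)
Your proposal is correct and follows essentially the same route as the paper, which invokes Proposition~4.6 and Corollary~4.7 of \cite{MaZ3} for the forward/dual pairing representation you spell out, then sorts the resulting rank-one terms exactly as you do (fast forward mode equal to $(\bar U^\eps)'$ paired with slow decaying dual $=$ excited; slow/slow $=$ scattered; remainder $=$ residual). One point you gloss over is the slow-forward/fast-dual cross pairings (the paper's term (d)): showing these carry no pole and obey the third bound requires, beyond the simple-zero structure of $D$ at $0$, the Lax condition \eqref{Lax} together with Lemma~6.11 of \cite{MaZ3}, so be sure to invoke that explicitly when you write it up.
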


\begin{proof} The duality relation \eqref{duality} allows to apply Proposition 4.6 of \cite{MaZ3} (and its Corollary 4.7), which describes ${\cal G}_\l$ as sums of pairings of forward and dual modes, for $\l$ in the intersection of $\L$ and the resolvent set of $L.$ By Lemma \ref{fwd-dual}, ${\cal G}_\l$ extends as a meromorphic map on $B(0,r).$  
 
 The excited term ${\cal E}_\l$ comprises the pole terms, corresponding to pairings of a fast, decaying forward mode associated with the derivative of the background wave with a slow, decaying dual mode, i.e. $W^+_7/\bar W^-_3$ for $y \leq 0$ and $W^-_4/\bar W^+_j$ for $y \geq 0,$ $1 \leq j \leq 4.$  
  
 The next-to-leading order term is the scattered term ${\cal S}_\l.$ It corresponds to pairings of a slow forward mode with a slow dual mode. For $y \leq 0,$ the scattered term comprises only fluid modes. For $y \leq 0 \leq x$ and for $0 \leq y \leq x,$ the scattered term vanishes, as there are no outgoing modes to the right of the shock (see Figures \ref{fig1} and \ref{fig4}).
  
 By the Evans function condition \eqref{D} and Lemma 6.11 of \cite{MaZ3}, the residual ${\cal R}_\l$ does not contain any pole term; it comprises: 
  \begin{itemize}
  \item[(a)] the contribution of the normal residuals to the fast forward/slow dual pairings involving the derivative of the background profile,
  \item[(b)] the fast forward/slow dual pairings not involving the derivative of the background profile,
  \item[(c)] the contribution of the normal residuals to the slow forward/slow dual pairings, and
  \item[(d)] the slow forward/fast dual pairings.
  \end{itemize}
  
 Term (a) is bounded by the first two terms in the upper bound for ${\cal R}_\l.$ Term (b) is smaller than term (a) by a $O(\l)$ factor. Term (c) is bounded by the third term in the upper bound. By the Lax condition \eqref{Lax}, the Evans function condition \eqref{D} and Lemma 6.11 of \cite{MaZ3}, term (d) is also bounded by the third term.
 \end{proof}

\subsubsection{The Evans function} \label{sec:Evans}

 By Lemma \ref{dcs}, for all $\l \in \L,$ the dimensions of $U(\mA_-(\l))$ and $S(\mA_+(\l)),$ the vector spaces associated with decaying solutions of \eqref{ode} at $-\infty$ and $+\infty,$ add up to the full dimension of the ambient space: 
  $$ \dim U(\mA_-(\l)) + \dim S(\mA_+(\l)) = 7. $$
   
\begin{defi}[Evans function] On $\L \cup B(0,r),$ define the Evans function as 
 $$ D(\e,\l) := \det (W_1^-, W_2^-, W^-_4, W^-_7, W_5^+, W_6^+, W_7^+)_{|x=0}.$$
\end{defi}

 The Evans function $D$ satisfies Proposition \ref{Lyprop}; it has a zero at $\l = 0,$ as reflected in equality \eqref{zero-E}.

\subsection{Inverse Laplace transform}\label{sec:IVT}

 Similarly as in Section \ref{sec:resker2} (or Section 2 of \cite{MaZ3}), define the Green function of $L(\e)$ as
 \begin{equation} \label{greenfn} {\cal G} := e^{t L(\e)} \t_{(\cdot)} \delta,
 \end{equation}
 where $\{ e^{t L(\e)}\}_{t \geq 0}$ is the semi-group generated by $L(\e).$ That is, the kernel of the integral operator $e^{t_0 L(\e)}$ is the Green function ${\cal G}$ evaluated at $t = t_0.$
 
 Assuming \eqref{D}, the inverse Laplace transform representation of the semi-group by the resolvent operator (see for instance \cite{Pa} Theorem 7.7; \cite{Z3} Proposition 6.24) yields 
 \be \label{inverseLT2}
{\cal G}(\e, x,t;y) = \frac{1}{2\pi i} \text{\rm P.V.}\int_{\eta_0-i\infty}^{\eta_0 + i\infty}
e^{\lambda t}{\cal G}_\lambda (\e,x,y) \,d \lambda,
\ee
for $\eta_0 > 0$ sufficiently large. 

\subsubsection{Pointwise Green function bounds} \label{sec:pointwise}

 Introduce the notations 
  $$\mbox{errfn}(y) := \int_{-\infty}^y e^{-z^2} \, dz,$$ and let, for $y < 0:$
  \begin{equation} \label{E-} {\bf e} :=  [c^0_{3,-}] \ell_{3}^{-\tt tr} \left( {\rm errfn}\left(\frac{y + a_3^- t}{\sqrt{4 \b_j^- t}}\right) - {\rm errfn}\left(\frac{y - a_3^- t}{\sqrt{4 \b_j^- t}}\right) \right),\end{equation}
 for $y > 0:$
 \begin{equation} \label{E+} {\bf e} :=  \sum_{1 \leq j \leq 4} [c^0_{j,+}] \ell_{j}^{+\tt tr} \left({\rm errfn}\left(\frac{y + a_j^+ t}{\sqrt{4 \b_j^+ t}}\right)  - {\rm errfn}\left(\frac{y - a_j^+ t}{\sqrt{4 \b_j^+ t}}\right)\right),\end{equation}
 and
  \begin{equation} \label{def-E}
   {\cal E}(\e,x,t;y) := (\bar U^\e)'(x) {\bf e}(\e,t;y).
   \end{equation}
 In \eqref{E-}-\eqref{E+} and below, the $\{a_j^\pm\}_{1\leq j \leq 4}$ are the characteristic speeds, i.e.~the limits at $\pm \infty$ of the eigenvalues of $\d_U F(\e, \bar U^\e),$ ordered as in \eqref{char}, the $\b_j^\pm,$ $1 \leq j \leq 3,$ are the positive diffusion rates that were introduced in \eqref{fluid-modes0} (and which depend on $\e,$ as do the characteristic speeds), and $\b_4^+ := d,$ the species diffusion coefficient.
 
Let for $y < 0:$
\begin{equation} \label{S-}\begin{aligned}
 {\cal S} :=  \chi_{\{t \geq 1\}} \sum_{1 \leq j \leq 2} r_j^- \ell_j^{-\tt tr} & (4 \pi \b_j^- t)^{-\frac{1}{2}} e^{-(x-y - a_j^-t)^2/4 \b_j^- t}
 \\ \quad + \quad \chi_{\{ t \geq 1\}} \frac{e^{-x}}{e^{-x} + e^x} & \Big( r_3^- \ell_3^{-\tt tr} (4 \pi \b_3^- t)^{-\frac{1}{2}} e^{-(x-y +(s - \s_-)t)^2/4 \b_3^- t} \\
   & \quad + \sum_{1 \leq j \leq 2} [c^{j,-}_{3,-}] r_j^- \ell_3^{-\tt tr} (4 \pi \b^{j,-}_{3,-} t)^{-\frac{1}{2}} e^{-(x - z^{j,-}_{3,-})^2/4 \b^{j,-}_{3,-} t} \Big),
     \end{aligned}
\end{equation}
and for $y > 0:$ 
\begin{equation} \label{S+}
 \begin{aligned}
  {\cal S}  & := \chi_{\{t \geq 1\}} \frac{e^{x}}{e^{-x} + e^x} \sum_{1 \leq j \leq 4} r_j^+ \ell_j^{+\tt tr} (4 \pi \b_j^+ t)^{-\frac{1}{2}} e^{-(x - y - a_j^+t)^2/4 \b_j^+ t} \\
    & \quad  +  \quad \chi_{\{t \geq 1\}} \frac{e^{-x}}{e^{-x} + e^x} \sum_{1 \leq j \leq 4} \sum_{1 \leq k \leq 2} [c^{k,-}_{j,+}] r_k^- \ell_j^{+\tt tr} (4 \pi \b^{k,-}_{j,+} t)^{-\frac{1}{2}}  e^{-(x - z^{k,-}_{j,+})^2/4 \b^{k,-}_{j,+} t},   \end{aligned}
\end{equation} 
   where the indicator function $\chi_{\{t \geq 1\}}$ is identically equal to 1 for $t \geq 1$ and 0 otherwise, and
 $$ z^{k,\pm}_{j,\pm} := a_j^\pm (t - |y| |a_k^\pm|^{-1}), \quad \b^{j,\pm}_{k,\pm} := \frac{|x|}{|a_j^\pm| t} \b_j^\pm + \frac{|y|}{ |a_k^\pm|t} \left(\frac{a_j^\pm}{a_k^\pm}\right)^2 \b_k^\pm.$$

 Let 
 \begin{equation} \label{def-h} {\cal H} := h(\e,t,x,y) \t_{x + st} \delta, \qquad h \ell_4^+ \equiv 0,\end{equation}
 where the notation $\t_{(\cdot)}\delta$ was introduced at the beginning of Section \ref{sec:resker2}. 
 
 Let finally ${\cal S}_0$ be the scattered term defined in \eqref{S-}-\eqref{S+} in which $[c^{j,\pm}_{k,\pm}] = 1$ for all $j,k.$

\begin{prop}\label{greenbounds}
Under \eqref{D}, there exists transmission coefficients $[{c}^0_{j,\pm}]$ and $[{c}^{j,\pm}_{k,\pm}],$ satisfying 
 \begin{equation} \label{trans-coeff} \left\{\begin{aligned}{} [c^0_{4+}] & = 0, & \\
   r_k^\star & = [c^0_{k,\star}] (v^\e_+ - v^\e_-) + [c^{1,-}_{k,\star}] r_1^- + [c^{2,-}_{k,\star}] r_2^-,\\
   [c^{1,-}_{4,+}] & = [c^{2,-}_{4,+}] = 0,
   \end{aligned} \right. \end{equation}
 where $1 \leq k\leq3$ if $\star = +$ and $1 \leq k \leq 2$ if $\star = -,$ $U^\e_\pm =:(v^\e_\pm,z^\e_\pm),$ such that the Green function ${\cal G}(\e,x,t;y)$ defined in \eqref{greenfn} may be decomposed as a sum of hyperbolic, excited, scattered, and residual terms, as follows:
 \begin{equation} \label{dec-G} {\cal G}= {\cal H} + {\cal E} + {\cal S} + {\cal R},\end{equation}
 where ${\cal H},$ ${\cal E}$ and ${\cal S}$ are defined in \eqref{E-}-\eqref{def-h}, 
with the estimates
\begin{equation} \label{bds-R} \begin{aligned}
  |\d_t^k \d_x^\a \d_y^{\a'} h| & \leq C \, e^{-\theta t},  \\ 
  |\d_x^\a \d_y^{\a'} {\cal R}| & \le  C \, e^{-\theta(|x-y| + t)} \\ &  + C\left( t^{-\frac{1}{2}(1 + \a + \a')} (t+1)^{-\frac{1}{2}} + e^{-\theta t}\right) e^{-(x-y)^2/Mt}  \\ & + C \left((t^{- \frac{1}{2}} + e^{-\theta |x|}) t^{-\frac{1}{2}(\a + \a')} + \a' t^{-\frac{1}{2}}e^{-\theta |y|} \right) |{\cal S}_0| . 
\end{aligned} \end{equation}
 uniformly in $\e,$ for $k \in \{0,1\},$ $\a \in \{0,1,2\},$ $\a' \in \{0,1\},$ for some $\theta, C, M > 0.$ 
\end{prop}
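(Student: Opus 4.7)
The strategy is the standard pointwise semigroup approach of \cite{MaZ3,ZH,MaZ3}: insert the resolvent kernel decomposition from Proposition \ref{G_l} into the inverse Laplace transform representation \eqref{inverseLT2} and evaluate each piece via contour deformation. First I would split the Bromwich contour into a low-frequency part contained in $B(0,r)$ and a high-frequency part $|\l|\ge r$ with $\Re \l = \eta_0$. On the high-frequency contour, I would invoke resolvent bounds obtained from the Kawashima-type hypothesis (H3), the parabolic-hyperbolic structure (A1)-(A2), and standard energy estimates; these give exponential-in-$t$ decay that is absorbed into the $e^{-\theta(|x-y|+t)}$ contribution in the bound for ${\cal R}$ and, for the purely hyperbolic reactive piece traveling along $x+st=y$, into the Dirac-supported term ${\cal H}$ via the method of characteristics applied to the $z$-equation.

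For the low-frequency portion I would deform the contour to a Riemann-saddle-type path passing through each characteristic point $\lambda_j^\pm(\xi)$, one per slow mode $\mu_j^\pm$. The excited piece ${\cal E}$ arises from the pole term $\lambda^{-1}(\bar U^\e)'(x)\sum_j[c^0_{j,s_y}]\ell_j^{s_y\,\mathrm{tr}} e^{-y\mu_j^{s_y}(\l)}$ of ${\cal G}_\l$ in Proposition \ref{G_l}; expanding $\mu_j^{s_y}(\l)=a_j^{-1}\l -a_j^{-3}\beta_j\l^2+O(\l^3)$ from \eqref{DAS-r-slow-4+} and \eqref{fluid-modes} and computing the integral $\frac{1}{2\pi i}\int e^{\lambda t-y\mu_j(\l)}\lambda^{-1}\,d\l$ via contour deformation through the saddle produces exactly the error-function expressions in \eqref{E-}-\eqref{E+}, up to remainders controlled by the higher-order terms in the expansion. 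The scattered piece ${\cal S}$ is treated identically, but now the exponent is $x\mu_j^{s_x}(\l)-y\mu_k^{s_y}(\l)$, yielding Gaussian kernels centered along the characteristics $x-y=a_j^\pm t$ when the characteristic type is unchanged, and the ``reflection'' kernels centered at $z_{j,\pm}^{k,\pm}$ when a mode incoming from $+\infty$ converts to an outgoing mode at $-\infty$ (with rate $\beta_{j,\pm}^{k,\pm}$ given by the usual convex combination). The partition-of-unity factor $e^{\pm x}/(e^{-x}+e^x)$ together with the cutoff $\chi_{\{t\ge 1\}}$ comes from gluing the low-$|x|$ and large-$|x|$ descriptions and removing the short-time singularity absorbed into the $e^{-\theta(|x-y|+t)}$ term.

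The residual ${\cal R}$ collects the four pairings (a)-(d) listed after Proposition \ref{G_l}. Terms (a) and (c), arising from the normal residuals in the Gap-Lemma expansions of Lemma \ref{fwd-dual}, contribute extra factors of $\l$ or $e^{-\theta|x|}$ in the integrand which, upon Laplace inversion, translate respectively into the gains $(t+1)^{-1/2}$ and $e^{-\theta|x|}$ appearing in \eqref{bds-R}. Term (b) gains an additional factor of $\l$ over ${\cal E}$ and is absorbed similarly. Term (d), the slow-forward/fast-dual pairing, is estimated using the Evans condition \eqref{D} together with Lemma 6.11 of \cite{MaZ3}, which rules out residues in the right half-plane and gives the advertised decay. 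Derivative bounds follow by differentiating under the integral sign, each $\d_x$ or $\d_y$ costing a factor $|\l|^{1/2}$ on the saddle contour, producing the factors $t^{-\frac{1}{2}(\a+\a')}$ and $t^{-\frac12}$ in \eqref{bds-R}.

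The transmission coefficient identities \eqref{trans-coeff} are identified by matching the resolvent kernel jump conditions at $x=0$ against the Rankine-Hugoniot data. Specifically, evaluating the pairings at $x=0$ uses that $(\bar U^\e)'$ lies simultaneously in the span of the fast decaying forward modes on both sides, together with \eqref{zero-E}, and resolving the resulting linear system against the basis $\{r_k^\pm\}$ yields the representation of each outgoing vector $r_k^\star$ in terms of $v_+^\e-v_-^\e$ and the two incoming $r_1^-, r_2^-$. The vanishing statements $[c^0_{4+}]=0$ and $[c^{1,-}_{4,+}]=[c^{2,-}_{4,+}]=0$ reflect the reactive structure: by the block-triangular form of $A_\pm$ noted after Lemma \ref{corresp2} and the fact that $r_4^+$ is purely reactive while $r_1^-,r_2^-$ and $v_+^\e-v_-^\e$ are purely fluid, the reactive mode decouples in the outgoing scattering data. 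The main technical obstacle will be the uniform-in-$\e$ control of the contour deformation and of the Gap-Lemma conjugators near the branch behavior of $\mu_4^\pm$ at $\l=0$, which requires care because the reactive slow mode on the $+\infty$ side is tangent to the fluid slow modes; the correct handling is already built into the normal-residual framework of Definition \ref{norm-res} and the analytic extensions provided by Lemmas \ref{fast-modes}-\ref{slow-modes}.
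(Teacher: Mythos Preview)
Your treatment of the decomposition \eqref{dec-G} and the bounds \eqref{bds-R} is correct and is essentially what the paper does: it simply invokes Proposition 7.1 of \cite{MaZ3} and Proposition 7.3 of \cite{LRTZ}, whose content is exactly the contour-splitting and Riemann saddle-point computation you describe. So on that part you are fine.

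The gap is in your derivation of the transmission-coefficient identities \eqref{trans-coeff}. You argue that $[c^0_{4+}]=0$ and $[c^{1,-}_{4,+}]=[c^{2,-}_{4,+}]=0$ follow from ``$r_4^+$ is purely reactive while $r_1^-,r_2^-$ and $v_+^\e-v_-^\e$ are purely fluid.'' But $r_4^+$ is \emph{not} purely reactive: because $A_\pm$ is block \emph{upper}-triangular in $(v,z)$ coordinates, the right eigenvector associated to the reactive eigenvalue generically has a nonzero fluid component (it is the \emph{left} reactive eigenvector $\ell_4^+=(0,0,0,1)^{\tt tr}$ that is purely reactive). So the decoupling you invoke does not by itself force those coefficients to vanish, and your matching-at-$x=0$ scheme, as stated, does not close.

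The paper obtains \eqref{trans-coeff} by two different mechanisms. For $[c^0_{4+}]=0$ it matches the \emph{residue} of ${\cal G}_\lambda$ at $\lambda=0$ across $y=0$ (not $x=0$): continuity of the pole term forces
$[c^0_{3,-}]\ell_3^- = [c^0_{4,+}]\ell_4^+ + \sum_{j=1}^3[c^0_{j,+}]\ell_j^+$,
and since $\ell_4^+$ is orthogonal to the fluid span $\{\ell_j^\pm\}_{1\le j\le 3}$ (this is where the block structure enters, on the \emph{left} eigenvectors), the reactive coefficient must vanish. For \eqref{trans-coeff}(ii)--(iii) the paper does not match at the shock at all; instead it uses conservation of mass in the fluid variables, \eqref{cons}: the limit $\lim_{t\to\infty}\int\!\!\int \pi({\cal E}+{\cal S})U_0\,dy\,dx=\int\pi U_0\,dy$ for every $U_0\in L^1$, tested first against fluid data (giving (ii)) and then against data parallel to $\ell_4^+$ (giving (iii)). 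This global conservation argument replaces the local matching you propose and sidesteps the issue with $r_4^+$.
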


\begin{proof} We only check \eqref{trans-coeff}, as decomposition \eqref{dec-G} and bounds \eqref{bds-R} are easily deduced from Proposition 7.1 of \cite{MaZ3} and Proposition 7.3 of \cite{LRTZ}. (See also Proposition 3.7 of \cite{TZ2}, especially equations (3.30)-(3.33) and (3.38).)

 The description of the residue of ${\cal G}_\l$ at $\l  = 0$ for $y < 0$ and $y > 0$ implies
    $$[c^0_{3,-}] \ell_{3}^- = [c^0_{4,+}] \ell_{4}^+ + \sum_{1 \leq j \leq 3} [c^0_{j,+}] \ell_{j}^+,$$
    corresponding to equation (1.34) in \cite{MaZ3}. The (reactive) left eigenvector vector $\ell_{4}^+$ being orthogonal to the (fluid) left eigenspace $\mbox{span}\{\ell_{j}^\pm\}_{1 \leq j \leq 3}$ (see \eqref{l4} and \eqref{lj}), this implies 
 \eqref{trans-coeff}(i).
 
Given $U_0 \in L^1,$ the estimates for ${\cal H}$ and ${\cal R}$ imply
 $$ \lim_{t \to +\infty} \int_{\R^2} ({\cal H} + {\cal R}) U_0 \, dy \, dx = 0.$$
Hence, by conservation of mass in the fluid variables, \eqref{cons}, for all $U_0 \in L^1,$
 \begin{equation} \label{cons-mass}
   \lim_{t \to +\infty} \int_{\R^2}  \pi ({\cal E} + {\cal S}) U_0 \, dy \, dx = \int_\R \pi U_0 \, dy.
 \end{equation}
where $\pi: \C^4_{v,z} \to \C^3_{v}$ is defined by $\pi (v,z) := v.$ (Equation \eqref{cons-mass} corresponds to (1.33) and (7.60) in \cite{MaZ3}.) Taking $U_0 \in \mbox{span}\{\ell_j^\pm\}_{1 \leq j \leq 3},$ we find \eqref{trans-coeff}(ii), and taking $U_0$ parallel to $\ell_4^+,$ we find \eqref{trans-coeff}(iii).
\end{proof}

\begin{rem} The terms ${\cal E}$ and ${\cal S}$ correspond to the low-frequency part of 
representation of ${\cal G}$ by inverse Laplace transform of the resolvent
kernel ${\cal G}_\lambda$, while the term ${\cal H}$ corresponds to the high-frequency part.  
As observed in {\rm \cite{MaZ3},} for low frequencies, 
the resolvent kernel in the case of real (physical) viscosity obeys
essentially the same description as in the artificial (Laplacian)
viscosity case, hence the estimates on ${\cal E}$ and ${\cal S}$ follow by the 
analysis in {\rm \cite{LRTZ}} of the corresponding artificial viscosity system,
specialized to the case of strong detonations (more general waves were
treated in {\rm \cite{LRTZ}}).
The estimate of the terms ${\cal H}$ and ${\cal R}$ follows exactly as for the nonreactive
case treated in {\rm \cite{MaZ3,Z2}.}
\end{rem}

\begin{rem} 
 Bound \eqref{bds-R}{\rm (ii)} is implied by bounds (7.1)-(7.4) of Proposition 7.1 of {\rm \cite{MaZ3}} and bounds (3.30), (3.32) and (3.38) of Proposition 3.7 of {\rm \cite{TZ2}}.
 
 Here the contribution of the hyperbolic, delta-function terms to the upper bounds for the spatial derivatives of ${\cal R}$ is absorbed in ${\cal H},$ and the short-time, $t \leq |a^\pm_k||y|,$ contributions of the scattered terms are absorbed in the generic parabolic residual term $e^{-\theta t} e^{-(x-y)^2/Mt}.$
 \end{rem}
 
\begin{cor} \label{e-r} The excited terms ${\cal E}_\l$ and ${\cal E}$ contain only fluid terms: ${\cal E}_\l \, \ell_4^{+} \equiv 0$ and ${\cal E} \, \ell_4^{+} \equiv 0.$
\end{cor}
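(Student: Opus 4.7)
The plan is to check the vanishing separately on $y<0$ and $y>0$, using only the structural formulas for the left eigenvectors together with the transmission coefficient identity \eqref{trans-coeff}(i).

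First I would recall the definitions. The excited resolvent term
$$
{\cal E}_\lambda(x,y) = \lambda^{-1} (\bar U^\e)'(x) \sum_{j \in \tilde{\bf D}(y)} [c^0_{j,s_y}]\, \ell_j^{s_y\,\tt tr}\, e^{-y\mu_j^{s_y}(\lambda)},
$$
while ${\cal E}(\e,x,t;y) = (\bar U^\e)'(x)\,{\bf e}(\e,t;y)$, with ${\bf e}$ given by \eqref{E-}--\eqref{E+}. Thus in both cases right-multiplication by $\ell_4^+$ reduces to checking that each scalar coefficient $\ell_j^{s_y\,\tt tr}\ell_4^+$ multiplied by the transmission coefficient vanishes.

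Next I would split according to the sign of $y$. For $y<0$, $\tilde{\bf D}(y) = \{3\}$, and the corresponding left eigenvector $\ell_3^-$ is a fluid mode of $A_-$, which by the structural form \eqref{lj} has vanishing reactive component, so $\ell_3^{-\,\tt tr}\ell_4^+ = 0$ since $\ell_4^+ = (0,0,0,1)^{\tt tr}$ by \eqref{l4}. For $y>0$, $\tilde{\bf D}(y) = \{1,2,3,4\}$: the three fluid contributions $j \in \{1,2,3\}$ vanish by the same orthogonality \eqref{l4}--\eqref{lj}, while the reactive contribution $j=4$ carries the prefactor $[c^0_{4,+}]$, which equals $0$ by \eqref{trans-coeff}(i). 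In every case the coefficient multiplying $\ell_j^{s_y\,\tt tr}\ell_4^+$ is either zero by orthogonality or zero by the transmission identity.

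Combining these two cases, each term in the sums defining ${\cal E}_\lambda$ and ${\bf e}$ produces zero upon contraction with $\ell_4^+$. Hence ${\cal E}_\lambda\,\ell_4^+ \equiv 0$ and ${\cal E}\,\ell_4^+ \equiv 0$, proving the corollary. There is no real obstacle: the entire content is already packaged in the orthogonality between fluid and reactive left eigenvectors together with the identity $[c^0_{4,+}]=0$, which was established precisely to encode the decoupling of the (reactive) species equation from the conservation of fluid mass in the low-frequency excited term.
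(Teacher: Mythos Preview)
Your argument is correct in spirit and follows the same route as the paper: both rely on the orthogonality \eqref{l4}--\eqref{lj} between fluid and reactive left eigenvectors together with the identity $[c^0_{4,+}]=0$ from \eqref{trans-coeff}(i). You in fact spell out the case analysis more carefully than the paper does.

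There is one small gap. The identity \eqref{trans-coeff}(i) is established in Proposition~\ref{greenbounds} for the transmission coefficients appearing in the Green function ${\cal G}$ (hence in ${\cal E}$), whereas ${\cal E}_\lambda$ is built from the transmission coefficients introduced separately in Proposition~\ref{G_l}. A priori these are two different sets of constants, so invoking \eqref{trans-coeff}(i) directly for ${\cal E}_\lambda$ is not justified. The paper closes this gap by observing that ${\cal G}_\lambda$ is the Laplace transform of ${\cal G}$, which forces the coefficients $[c^0_{j,\pm}]$ in the two propositions to coincide; only then does $[c^0_{4,+}]=0$ carry over to ${\cal E}_\lambda$. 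You should insert this one-line observation before applying \eqref{trans-coeff}(i) to the resolvent excited term.
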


\begin{proof} The equality ${\cal E} \, \ell_4^{+} \equiv 0$ follows from \eqref{trans-coeff}(i). The resolvent kernel ${\cal G}_\l$ is the Laplace transform of the Green function ${\cal G},$ so that the coefficients $[c^0_{j,\pm}]$ in Propositions \ref{G_l} and \ref{greenbounds} must agree. Hence, \eqref{trans-coeff}(i) implies also ${\cal E}_\l \, \ell_4^{+} \equiv 0.$
\end{proof}

\begin{cor} \label{s-r} For all $\eta > 0,$ for some $C, M > 0,$ some $\theta_1(\eta,s) > 0,$ 
 the following bounds hold, for $\a \in \{0,1,2\}:$
 \begin{equation} \label{r-s-r} \begin{aligned} | e^{- \eta y^+} \d_x^\a {\cal S} \ell_4^{+}| & \leq C e^{-\theta_1 t} e^{-\eta|x-y|/2},\\
 | e^{- \eta y^+} \d_x^\a {\cal R} \ell_4^{+}| & \leq C e^{-\theta_1 t} (e^{-\eta|x-y|/2} + e^{-(x-y)^2/Mt}).
 \end{aligned}\end{equation}
 \end{cor}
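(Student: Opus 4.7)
The plan is to project $\mathcal{S}$ and $\mathcal{R}$ onto the reactive direction $\ell_4^+$, exploit the block orthogonality between fluid and reactive left eigenvectors together with the transmission identities \eqref{trans-coeff}(iii) to reduce to a single explicitly-written convecting reactive Gaussian, and then estimate it pointwise.

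First I identify the surviving contributions. Since $\ell_4^+=(0,0,0,1)^{\tt tr}$ by \eqref{l4} and the fluid left eigenvectors $\ell_j^\pm$ for $1\le j\le 3$ have vanishing fourth entry by \eqref{lj}, one has $\ell_j^{\pm\tt tr}\ell_4^+=\delta_{j4}\delta_{\pm,+}$. Reading off \eqref{S-} and \eqref{S+}: for $y<0$ every dyad is of the form $r_j^-\ell_k^{-\tt tr}$ with $k\le 3$, so $\mathcal{S}\ell_4^+\equiv 0$; for $y>0$ only the $j=4$ summand of the first sum in \eqref{S+} survives, while the $j=4$ terms of the cross sum vanish thanks to $[c^{1,-}_{4,+}]=[c^{2,-}_{4,+}]=0$ from \eqref{trans-coeff}(iii). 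Using $a_4^+=-s$ and $\beta_4^+=d$, this leaves, for $y>0$,
\[
\mathcal{S}\ell_4^+ = \chi_{\{t\ge 1\}}\,\frac{e^x}{e^{-x}+e^x}\,r_4^+\,(4\pi dt)^{-1/2}\,e^{-(x-y+st)^2/4dt}.
\]

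Next comes the pointwise estimate. Using the algebraic identity $(x-y+st)^2/4dt=(x-y)^2/4dt+s(x-y)/2d+s^2t/4d$, I would apply Young's inequality $e^{-\beta(x-y)^2/4dt}\le e^{-\eta|x-y|/2}e^{dt\eta^2/(4\beta)}$ to a fraction $\beta\in(0,1)$ of the diffusive quadratic in order to extract the weight $e^{-\eta|x-y|/2}$, and combine the remaining diffusive quadratic with the mixed term $-s(x-y)/2d$, the purely temporal factor $e^{-s^2t/4d}$, the cutoff $\frac{e^x}{e^{-x}+e^x}\le\min(1,e^{2x})$, and the weight $e^{-\eta y}$ with $y>0$. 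A case split on the sign of $x$ and on whether the Gaussian peak $x=y-st$ lies to the left or to the right of the origin then yields the exponential time decay $e^{-\theta_1 t}$ for some $\theta_1=\theta_1(\eta,s)>0$. Derivatives $\partial_x^\alpha$ bring down prefactors $\lesssim (|x-y+st|/t)^\alpha+t^{-\alpha/2}$, absorbable into the exponential decay for $t\ge 1$.

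For $\mathcal{R}\ell_4^+$ I decompose $\mathcal{R}$ into the three pieces of \eqref{bds-R}(ii): the term $e^{-\theta(|x-y|+t)}$ directly satisfies the claim for $\eta\le 2\theta$ and $\theta_1\le\theta$; the generic parabolic residual furnishes the $e^{-(x-y)^2/Mt}$ piece; and the term proportional to $|\mathcal{S}_0|$ is handled by the same block-orthogonality argument applied to $\mathcal{S}_0$, which differs from $\mathcal{S}$ only in having unit transmission coefficients. The surviving direct reactive Gaussian of $\mathcal{S}_0\ell_4^+$ is estimated exactly as above, while the cross-scattered Gaussian (present because the coefficients are unity rather than zero) is another leftward-convecting Gaussian with cutoff $\frac{e^{-x}}{e^{-x}+e^x}$ admitting an identical estimate; the extra $t^{-1/2}$ and $e^{-\theta|y|}$ prefactors of \eqref{bds-R}(iii) only tighten the bound. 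The principal obstacle is the pointwise Gaussian estimate itself: one must balance the weight $e^{-\eta y}$, the leftward convection at speed $s$, the diffusive spreading $\sqrt{dt}$, and the cutoff at $x=0$ so as to extract simultaneous decay in $t$ and in $|x-y|$. The structural vanishing via reactive/fluid orthogonality and \eqref{trans-coeff}(iii), paralleling Corollary~\ref{e-r}, is precisely what reduces the problem to this single explicit Gaussian.
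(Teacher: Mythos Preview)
Your identification of the single surviving reactive Gaussian in $\mathcal{S}\ell_4^+$ via fluid/reactive orthogonality and \eqref{trans-coeff}(iii) is correct and matches the paper. For the pointwise estimate of this Gaussian, however, your completing-the-square plus Young's-inequality route is more elaborate than needed and introduces a growing factor $e^{dt\eta^2/(4\beta)}$ that must then be absorbed. The paper's argument is simpler: in the relevant case $0\le x\le y$ one has $e^{-\eta y}\le e^{-\eta|y-x|}=e^{-\eta|x-y|}$ directly, furnishing the spatial weight without touching the Gaussian; a split on $|x-y|\le st/2$ (so the Gaussian gives $e^{-s^2t/16}$) versus $|x-y|>st/2$ (so $y>st/2$ and the leftover $e^{-\eta y/2}\le e^{-\eta st/4}$) then yields the temporal decay.

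There is a genuine gap in your treatment of $\mathcal{R}\ell_4^+$. The bound \eqref{bds-R}(ii) is a \emph{scalar} estimate on $|\mathcal{R}|$; its middle ``generic parabolic'' piece carries only the coefficient $t^{-\frac12(1+\alpha+\alpha')}(t+1)^{-1/2}+e^{-\theta t}$, which is merely polynomially decaying in $t$, and for $y\le 0$ the weight $e^{-\eta y^+}=1$ contributes nothing, so one cannot obtain $e^{-\theta_1 t}e^{-(x-y)^2/Mt}$ from \eqref{bds-R}(ii) alone. Likewise, applying block-orthogonality to $\mathcal{S}_0$ presupposes that the portion of $\mathcal{R}$ bounded by $|\mathcal{S}_0|$ retains the dyadic structure $r_j\ell_k^{\tt tr}$, which a scalar bound cannot guarantee. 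The paper's argument is instead structural: it observes that the parabolic and $\mathcal{S}_0$ contributions to $\mathcal{R}$ arise, in the inverse-Laplace/saddle-point derivation underlying Proposition~\ref{greenbounds}, specifically from the scattered resolvent $\mathcal{S}_\lambda$; hence the corresponding contributions to $\mathcal{R}\ell_4^+$ come only from $\mathcal{S}_\lambda\ell_4^+$, which carries the single reactive mode, and the estimate of part (i) then transfers directly to part (ii).
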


\begin{proof} By \eqref{trans-coeff}{\rm (iii)}, the contribution of the reactive modes to $S$ is 
 \begin{equation} \label{con-r-s} \chi_{\{ y > 0\}} \chi_{\{t \geq 1\}} \frac{e^x}{e^x + e^{-x}} r_4^+ \ell_4^{+\tt tr} (4 \pi d t )^{-1/2} e^{-(x - y  + s t)^2/4dt}.
 \end{equation}
 Given $0 \leq x \leq y,$ we can bound $e^{-\eta y} e^{-(x - y + st)^2/4dt},$ for $|x - y| \leq \frac{1}{2} st,$ by
  $$ e^{-\eta y} e^{- (st/2)2/4t}\le   e^{-\eta y}e^{- s^2 t/16} \leq  e^{-\eta |y-x|}e^{- s^2 t/16},$$
  and, for $|x-y| > \frac{1}{2} st,$ by
  $$ e^{-\eta y}\le e^{-\eta |y-x|/2} e^{-\eta y/2}                \le e^{-\eta|y-x|/2} e^{-\eta st/4},$$
  and this implies \eqref{r-s-r}(i). 
  To prove \eqref{r-s-r}(ii), we note that the contribution of the parabolic terms $t^{-\frac{1}{2}} (t+1)^{-\frac{1}{2}} e^{-(x-y)^2/Mt}$ and ${\cal S}_0$ to ${\cal R} \ell_4^+$ comes from Riemann saddle-point estimates of the sole scattered terms ${\cal S}_\l \ell_4^+$ (see the proof of Proposition 7.1 in \cite{MaZ3} for more details). Hence \eqref{r-s-r}(i) implies \eqref{r-s-r}(ii).
 \end{proof}
 
 \begin{rem} \label{rem-greenbounds} The proof of Proposition 7.1 of {\rm \cite{MaZ3}} shows that Proposition {\rm \ref{greenbounds}} 
 applies more generally to linear operators of the form \eqref{def-l} that satisfy \eqref{D} and the conditions {\rm (A1)-(A2),} {\rm (H1)-(H4)} of Section {\rm \ref{remarkson}.}
\end{rem}

\subsubsection{Convolution bounds}\label{conbds}

{}From the pointwise bounds of Proposition \ref{greenbounds} and Remarks \ref{e-r} and \ref{s-r}, we
obtain by standard convolution bounds the following $L^p\to L^q$
estimates, exactly as described in \cite{MaZ1,MaZ2,MaZ3,MaZ4,Z2}
for the viscous shock case.

\begin{cor}\label{Lpbds}
Under \eqref{D}, for all $t\ge 1$, some $C >0$, any $\eta>0$, for any $1\le q\le p,$ $1 \leq p \leq +\infty,$ and $f\in L^q \cap W^{1,p},$
\begin{eqnarray}
  \left|\int_\R ({\cal S} + {\cal R})(\cdot,t;y)f(y)
   \,dy\right|_{L^p}
  & \le & C t^{-\frac{1}{2}(\frac{1}{q}-\frac{1}{p})} |f|_{L^q},
 \label{tGbounds} \nonumber
 \\
 \left|\int_\R \d_y ({\cal S} + {\cal R})(\cdot,t;y)f(y)
   \,dy\right|_{L^p}
  & \le & C t^{-\frac{1}{2}(\frac{1}{q} - \frac{1}{p})-\frac{1}{2}} |f|_{L^q}
  + Ce^{-\eta t} |f|_{L^p},
 \label{tGybounds} \nonumber 
 \\
  \left|\int_\R 
({\cal S} + {\cal R}) (\cdot,t;y) \ell_4^+ f(y) e^{-\theta y^+} \,dy\right|_{L^p}
 & \le & C t^{-\frac{1}{2}(\frac{1}{q} - \frac{1}{p})-\frac{1}{2}} |f|_{L^q} + Ce^{-\eta t} |f|_{L^p},
 \label{tGqbounds} \nonumber \\
 \left|\int_\R {\cal H}(\cdot,t;y)f(y)dy\right|_{L^p}
  & \le & Ce^{-\eta t} |f|_{L^p},
 \label{Hbounds} \nonumber
 \end{eqnarray}
where $y^+ := \max(y,0).$ Likewise, for all $x$ and all $t \geq 0,$ 
\begin{eqnarray}
  |\d_y {\bf e} (\cdot, t)|_{L^p} + |\d_t {\bf e}(\cdot, t)|_{L^p} 
  & \le&  C t^{-\frac{1}{2}(1-\frac{1}{p})},
 \label{e1} \nonumber \\
  |\d_t \d_y {\bf e}(\cdot, t)|_{L^p} & \le & C t^{-\frac{1}{2}(1-\frac{1}{p})-\frac{1}{2}}.
 \label{e2} \nonumber
\end{eqnarray}
\end{cor}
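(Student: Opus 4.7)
\bigskip

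\noindent\textbf{Proof plan for Corollary \ref{Lpbds}.}
The strategy is to bound each piece of the decomposition ${\cal G}={\cal H}+{\cal E}+{\cal S}+{\cal R}$ separately via Young's convolution inequality, using the pointwise bounds of Proposition \ref{greenbounds} together with Corollaries \ref{e-r} and \ref{s-r}. Recall Young: $|k*f|_{L^p}\le |k|_{L^r}|f|_{L^q}$ with $1+1/p=1/r+1/q$, so $1-1/r=1/q-1/p$; applied to a shifted Gaussian kernel $k_t(z):=(4\pi\beta t)^{-1/2}e^{-(z-at)^2/4\beta t}$, whose $L^r$ norm is $Ct^{-\frac12(1-1/r)}=Ct^{-\frac12(1/q-1/p)}$, this yields precisely the advertised rate. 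I would organize the argument as four short lemmas, one per term.

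\smallskip

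\noindent\emph{Scattered term.} Each summand of ${\cal S}$ in \eqref{S-}--\eqref{S+} is (for $t\ge 1$) either a rank-one shifted Gaussian in $x-y$ or, in the transmitted/reflected case, a Gaussian in $x-z^{k,\star}_{j,\star}$; since $z^{k,\star}_{j,\star}$ is affine in $(y,t)$ with derivative $O(1)$ in $y$, a change of variables reduces the convolution to the same Young-type estimate, giving the $t^{-\frac12(1/q-1/p)}$ bound. The short-time cutoff $\chi_{\{t\ge 1\}}$ makes the bound trivial for $t\le 1$. For $\partial_y{\cal S}$, each $y$-derivative hits either a Gaussian (producing a factor of $t^{-1/2}$ from $(x-y-at)/t$) or the transmission factor $e^{-x}/(e^x+e^{-x})$ (which is time-independent and bounded, contributing an $O(1)$ bounded factor), yielding the extra $t^{-1/2}$. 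For contraction with $\ell_4^+$ weighted by $e^{-\eta y^+}$, I invoke Corollary \ref{s-r}(i) directly: the kernel is pointwise bounded by $Ce^{-\theta_1 t}e^{-\eta|x-y|/2}$, whose $L^1_y$ norm is $Ce^{-\theta_1 t}$, giving the exponential bound $Ce^{-\eta t}|f|_{L^p}$ (and a fortiori the claimed mixed bound).

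\smallskip

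\noindent\emph{Residual term.} By \eqref{bds-R}(ii), ${\cal R}$ is controlled by three pieces: (a) $e^{-\theta(|x-y|+t)}$, whose $L^1_y$ norm is $Ce^{-\theta t}$, handled trivially; (b) a Gaussian $e^{-(x-y)^2/Mt}$ multiplied by $t^{-\frac12(1+\alpha+\alpha')}(t+1)^{-1/2}+e^{-\theta t}$, whose $L^r_y$ norm contributes $t^{\frac12(1-1/r)}\cdot t^{-\frac12(1+\alpha+\alpha')}(t+1)^{-1/2}$, yielding the claimed $t^{-\frac12(1/q-1/p)-\frac12(\alpha+\alpha')}(t+1)^{-1/2}$ (in particular better than ${\cal S}$ by the extra $(t+1)^{-1/2}$, hence absorbed into the scattered bound); and (c) terms of the form $(t^{-1/2}+e^{-\theta|x|})t^{-\frac12(\alpha+\alpha')}|{\cal S}_0|+\alpha' t^{-1/2}e^{-\theta|y|}|{\cal S}_0|$, which are handled by the ${\cal S}_0$ analysis above (since $|{\cal S}_0|$ is a bounded sum of shifted Gaussians without transmission coefficients). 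The $\ell_4^+$-contracted weighted version uses Corollary \ref{s-r}(ii) identically to the scattered case.

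\smallskip

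\noindent\emph{Hyperbolic and excited terms.} For ${\cal H}=h(\varepsilon,t,x,y)\tau_{x+st}\delta$, the delta distribution is handled by the identity $\int h(\cdot,t,x,y)\delta(y-x-st)f(y)\,dy=h(\cdot,t,x,x+st)f(x+st)$, and \eqref{bds-R}(i) gives $|h|\le Ce^{-\theta t}$, whence the $L^p$ bound $Ce^{-\eta t}|f|_{L^p}$ by translation invariance. For the excited term ${\bf e}$ defined in \eqref{E-}--\eqref{E+}, direct differentiation of $\textrm{errfn}((y\pm a_j^\pm t)/\sqrt{4\beta_j^\pm t})$ produces Gaussians in $y$ (for $\partial_y$) and $y/t$ times Gaussians (for $\partial_t$), whose $L^p_y$ norms are readily computed as $Ct^{-\frac12(1-1/p)}$ and $Ct^{-\frac12(1-1/p)-\frac12}$ respectively for the mixed derivative. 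These computations are entirely standard.

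\smallskip

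\noindent\emph{Main obstacle.} All estimates are routine once Proposition \ref{greenbounds} and Corollaries \ref{e-r}, \ref{s-r} are in hand; the delicate point is bookkeeping the fourth (reactive) component against the zero-mass structure \eqref{trans-coeff} to ensure that the $\ell_4^+$-contracted convolutions gain a full $t^{-1/2}$ (as in the third displayed bound), rather than the weaker $t^{0}$ one would get without the exponential time decay of Corollary \ref{s-r}. This is precisely where the cancellation $[c^0_{4,+}]=0$ and $[c^{1,-}_{4,+}]=[c^{2,-}_{4,+}]=0$ from \eqref{trans-coeff} plays the crucial role, as in \cite{LRTZ}.
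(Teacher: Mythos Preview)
Your proposal is correct and follows exactly the approach indicated in the paper, which states only that the bounds follow ``by standard convolution bounds\dots exactly as described in \cite{MaZ1,MaZ2,MaZ3,MaZ4,Z2} for the viscous shock case''; you have simply written out what those standard convolution bounds are---Young's inequality applied term by term to the pointwise decomposition of Proposition~\ref{greenbounds}, together with Corollaries~\ref{e-r} and~\ref{s-r} for the reactive component. One minor slip: in the $\partial_y{\cal S}$ paragraph, the transmission factors $e^{\pm x}/(e^{-x}+e^x)$ depend on $x$, not $y$, so the $y$-derivative never hits them; this does not affect your conclusion.
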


\section{Stability: Proof of Theorem \ref{stab}}\label{stabsection}

 We often omit to indicate dependence on $\e$ in the proof below. All the estimates are uniform in $\e.$

\subsection{Linearized stability criterion}\label{linstab}

\begin{proof}[Proof of Theorem {\rm \ref{stab}:} linear case]
Sufficiency of \eqref{D} for linearized orbital stability follows
immediately by the bounds of Corollary \ref{Lpbds}, 
exactly as in the viscous shock case, setting
$$
\delta(t):= \int_\R {\bf e}(x,t;y)U_0(y) \, dy
$$
so that
$$
U-\delta(t)\bar U'= \int_\R ({\cal H} + {\cal S} + {\cal R})(x,t;y)U_0(y) \, dy;
$$
see \cite{ZH, MaZ3, Z2} for further details.
Necessity follows from more general spectral considerations not requiring
the detailed bounds of Proposition \ref{greenbounds};
see the discussion of effective spectrum in \cite{ZH, MaZ3, Z2}.
The argument goes again exactly as in the viscous shock case.
\end{proof}

\subsection{Auxiliary energy estimate}\label{auxiliary}

Consider $\tilde U$ the solution of \eqref{rNS-symb} issued from $\tilde U_0,$ and let 
 \begin{equation} \label{perturb0}
  U(x,t):= \tilde U(x+\delta(t),t)-\bar U(x).
  \end{equation}
Then, the following auxiliary energy estimate holds. 

\begin{lem} [Proposition 4.15, \cite{Z2}] \label{aux}
Under the hypotheses of Theorem {\rm \ref{stab}}, assume that $\tilde U_0\in H^3$, and
suppose that, for $0\le t\le T$, the suprema of $ |\dot\delta|$
and the $H^{3}$ norm of $U$
each remain bounded by a sufficiently small constant. Then, for all
$0\le t\le T$, for some $\theta > 0,$ 
$$ |U(t)|_{H^3}^2\le C e^{-\theta
t} |U(0)|^2_{H^3} +C\int_0^t e^{-\theta (t-s)}(|U|_{L^2}^2+
|\dot\delta|^2 )(s)\, ds.
$$
\end{lem}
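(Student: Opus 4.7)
The plan is to perform a high-order Friedrichs--Kawashima energy estimate on the perturbation equation, following the scheme developed for viscous shocks in \cite{MaZ3, Z2}.

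First, differentiating $U(x,t) = \tilde U(x+\delta(t),t) - \bar U(x)$ in $t$, using that $\tilde U$ solves \eqref{rNS-symb} and $\bar U$ is stationary, I obtain the perturbation equation
\be\label{aux-perteq}
\partial_t U - L(\e) U \;=\; \dot\delta(t)\,\bigl(\bar U'(x) + \partial_x U\bigr) \;+\; Q(U, \partial_x U),
\ee
where $L(\e)$ is as in \eqref{def-l} and $Q$ collects the quadratic and higher-order residuals from Taylor expansion of $F, B, G$ about $\bar U$. In the $(\t,w)$ coordinatization, assumption (A2) renders the system partially parabolic (diffusion only on the $w$-block), so direct energy methods do not by themselves deliver dissipation on $\t$-derivatives.

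To remedy this, for each $k\in\{0,1,2,3\}$ I would differentiate \eqref{aux-perteq} $k$ times in $x$, pair with $S\,\partial_x^k U$ for a Friedrichs symmetrizer $S$ of the convection part, and integrate. The parabolic block yields $\theta\,|\partial_x^{k+1} w|_{L^2}^2$; to recover the missing $\partial_x^{k+1}\t$-bound I augment the energy by a correction $\kappa \langle K\,\partial_x^{k-1}U,\,\partial_x^k U\rangle$, where $K$ is a skew-symmetric Kawashima compensator whose existence is ensured by the genuine-coupling portion of (H3). Summing over $k$ with carefully chosen weights produces a functional $\mathcal{E}(t)$ equivalent to $|U|_{H^3}^2$ satisfying
\be\label{aux-diffineq}
\frac{d}{dt}\mathcal{E} + \theta\,|\partial_x U|_{H^2}^2 \;\le\; C\bigl(|U|_{L^2}^2 + |\dot\delta|^2\bigr) + C\bigl(|U|_{H^3} + |\dot\delta|\bigr)\,|\partial_x U|_{H^2}^2.
\ee
The $|\dot\delta|^2$ contribution arises from Cauchy--Schwarz on the $\dot\delta\,\bar U'$ term using the exponential decay of $\bar U'$ from Corollary \ref{profdecay}; the nonlinear $Q$ is handled by Moser-type estimates together with the embedding $H^1 \hookrightarrow L^\infty$.

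Under the smallness hypothesis, the last term on the right of \eqref{aux-diffineq} is absorbed into the dissipation. A Poincar\'e-type interpolation bounds $|\partial_x U|_{H^2}^2$ from below by $\theta_1(\mathcal{E} - |U|_{L^2}^2)$, so \eqref{aux-diffineq} reduces to $\frac{d}{dt}\mathcal{E} + \tilde\theta\,\mathcal{E} \le C(|U|_{L^2}^2 + |\dot\delta|^2)$, and Gr\"onwall yields the stated bound. The main obstacle is the construction of the Kawashima compensator uniformly in $\e$, adapted to the variable-coefficient linearization about $\bar U^\e$ rather than constant-coefficient endstates; this was worked out in the shock-wave setting of \cite{MaZ3, Z2} under exactly the structural hypotheses (A1)--(A2), (H1)--(H3) in force here, and the zeroth-order reactive term $\partial_U G(\bar U)$ contributes only commutator terms that are absorbed as lower-order perturbations.
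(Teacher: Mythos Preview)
The paper does not supply its own proof of this lemma: it is stated as a direct citation of Proposition~4.15 of \cite{Z2}, with no argument given here. Your sketch is an accurate outline of the Friedrichs--Kawashima energy method that underlies that cited result, invoking precisely the structural hypotheses (A1)--(A2), (H1)--(H3) the present paper has verified for \eqref{rNS}, and correctly noting that the zeroth-order reactive source enters only as a lower-order commutator; so the proposal is consistent with the intended proof.
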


\subsection{Nonlinear stability} \label{stabproof}

\begin{proof}[Proof of Theorem {\rm \ref{stab}:} nonlinear case]

Let $U$ be the perturbation variable associated with solution $\tilde U$ as in \eqref{perturb0}; by a Taylor expansion, $U$ solves the perturbation equation
$$\d_t U-LU= \d_x \mathit{Q_f}(U,\d_x U) + \mathit{Q}_r(U)+ \dot \delta (t)(\bar U' + \d_x U),$$
where the linear operator $L$ is defined in \eqref{def-l}, and
 \begin{equation} \label{Qbound} | \mathit{Q_f} |\leq C |U|(|U| +  |\d_x U|),\end{equation}
 where $C$ depends on $\| U \|_{L^\infty}$ and $\| \bar U\|_{W^{1,\infty}}.$  

\begin{lem}\label{r-term} Under Assumptions {\rm \ref{idealgas}}, {\rm \ref{ignition}}, if the temperature $T$ associated with solution $U$ satisfies
 $\| T\|_{L^\infty} < T_i - T_+$
 (by Lemma {\rm \ref{ex}}, $0 < T_i - T_+$), then the nonlinear reactive term $\mathit{Q}_r$ has the form
\begin{equation} 
 \label{reactive-NL-term}
 \mathit{Q}_r(U)=\ell_4^+ e^{-\eta_0 x^+}q_r(U),
 \end{equation}
 where $x^+ := \max(x,0),$ $\eta_0 > 0$ is as in Corollary {\rm \ref{profdecay},} and $q_r(U) = q_r(w,z)$ is a scalar such that
\begin{equation} \label{qbound} |q_r(U)| \leq C|U^|2,\end{equation}
where $C$ depends on $\|U\|_{L^\infty}$ and $\|\bar U\|_{L^\infty}.$
\end{lem}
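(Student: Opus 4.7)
The plan is to compute $Q_r(U)$ directly from the definition and exploit the vanishing of the ignition function on the right endstate. Since $G(U)=(0,0,0,-k\phi(T(U))z)^{\tt tr}$ has only its fourth component nonzero, so do its Fr\'echet derivative $\partial_U G(\bar U)$ and therefore the quadratic Taylor remainder
\begin{equation*}
Q_r(U) \;=\; G(\bar U+U) - G(\bar U) - \partial_U G(\bar U)\,U.
\end{equation*}
Writing $N(w,z):=\phi(T(w,z))\,z$, we obtain $Q_r(U)=\ell_4^+\,(Q_r(U))_4$ with
\begin{equation*}
(Q_r(U))_4 \;=\; -k\bigl[N(\bar w+w,\bar z+z)-N(\bar w,\bar z)-dN(\bar U)\cdot U\bigr],
\end{equation*}
and the standard second-order Taylor estimate, with $N$ smooth on the relevant compact set (fixed by $\|\bar U\|_{L^\infty}$ and $\|U\|_{L^\infty}$), yields the pointwise bound $|(Q_r(U))_4|\le C|U|^2$ for some $C$ depending only on $\|U\|_{L^\infty}$ and $\|\bar U\|_{L^\infty}$.

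The content of the lemma beyond this standard Taylor bound is the spatial weight $e^{-\eta_0 x^+}$. The key observation is that by Corollary \ref{profdecay}, $\bar T(x)\to T_+<T_i$ exponentially at rate $\eta_0$ as $x\to+\infty$. Setting $\delta:=\tfrac12\bigl(T_i-T_+-\|T\|_{L^\infty}\bigr)>0$ (strictly positive by hypothesis), there exists $x_0>0$, independent of the perturbation, such that $\bar T(x)\le T_+ +\delta$ for all $x\ge x_0$, and therefore $\tilde T(x)=\bar T(x)+T(x)\le T_+ +\delta+\|T\|_{L^\infty}<T_i$ and $\bar T(x)<T_i$ on $\{x\ge x_0\}$. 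By Assumption \ref{ignition}, $\phi$ and $\phi'$ vanish identically on $(-\infty,T_i]$, so on $\{x\ge x_0\}$ we have $N(\bar w+w,\bar z+z)=0$, $N(\bar w,\bar z)=0$ and $dN(\bar U)=0$, hence $(Q_r(U))_4\equiv 0$ there.

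Combining these two facts, $|(Q_r(U))_4(x)|\le C\,|U(x)|^2\,\mathbf{1}_{\{x\le x_0\}}\le Ce^{\eta_0 x_0^+}\,e^{-\eta_0 x^+}|U(x)|^2$, and it suffices to define $q_r(U):=e^{\eta_0 x^+}(Q_r(U))_4$ to obtain \eqref{reactive-NL-term} together with the bound \eqref{qbound}. There is essentially no technical obstacle: the argument is only a Taylor expansion plus the observation that the strict smallness assumption $\|T\|_{L^\infty}<T_i-T_+$ provides a margin large enough to push $\tilde T$ below the ignition temperature everywhere past the (fixed) point where $\bar T$ has come close enough to its right endstate.
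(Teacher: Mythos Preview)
Your proof is correct and follows essentially the same approach as the paper: both identify the reactive remainder as living in the $\ell_4^+$ direction, bound it quadratically by Taylor expansion, and extract the weight $e^{-\eta_0 x^+}$ from the fact that $\phi$ (and its derivatives) vanish identically once $\bar T+T<T_i$, which happens for all $x$ past some fixed threshold by exponential convergence of $\bar T$ to $T_+$ and the smallness assumption on $\|T\|_{L^\infty}$. One minor slip: your $x_0$ is not literally ``independent of the perturbation'' since $\delta$ depends on $\|T\|_{L^\infty}$, but this dependence is harmlessly absorbed into the constant $C$, which the statement already allows to depend on $\|U\|_{L^\infty}$.
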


\begin{proof} 
We use the specific form
$-k \phi(T)z \ell_4^{+}$
of the reactive source in \eqref{rNS}, 
together with Taylor expansion
$$
\begin{aligned}
(\phi(\bar T+T)(\bar z + z)-
(\phi(\bar T)\bar z -
&(\phi'(\bar T)T\bar z + \phi(\bar T)z)\\&=
\phi'(\bar T) T  z + \phi''(\bar T+ \beta T)T^2\bar z,\\
\end{aligned}
$$
for some $0< \beta <1$, and the fact that $\phi'(\bar T + T)\le Ce^{-\eta_0 x^+}$
for $|T| < T_i - T_+,$ for $\eta > 0$ as in Corollary \ref{profdecay}, by $\phi(T_+) = 0$ together with
the property that $\phi'(T)\equiv 0$ for $T\le T_i$
and exponential convergence of $\bar U(x)$ to $U_+$ as $x\to +\infty$.
\end{proof}

Recalling the standard fact that $\bar U'$ is a stationary
solution of the linearized equations \eqref{linearized},
$L\bar U'=0$, or
$$
\int_\R {\cal G}(x,t;y)\bar U'(y)dy=e^{t L}\bar U'(x)
=\bar U'(x),
$$
we have by Duhamel's principle:
$$
\begin{array}{l}
  \displaystyle{
  U(x,t)= \delta (t)\bar U'(x) + \int_\R {\cal G}(x,t;y)U_0(y)\,dy} \\
  \displaystyle{\qquad
  +\int^t_0 \int_\R {\cal G}(x,t-s;y) \ell_4^{+} e^{-\eta y^+}
  q_r(U) (y,s)\,dy\,ds}\\
  \displaystyle{\qquad
  -\int^t_0 \int_\R \d_y {\cal G}(x,t-s;y)
  (\mathit{Q_f}(U,\d_x U)+\dot \delta U ) (y,s)\,dy\,ds.}
\end{array}
$$
Defining
\begin{equation}
 \begin{array}{l}
  \displaystyle{
  \delta (t)=-\int_\R {\bf e}(y,t) U_0(y)\,dy }\\
  \displaystyle{\qquad
  -\int^t_0 \int_\R {\bf e}(y,t-s) \ell_4^{+} e^{-\eta y^+}
  q_r(U) (y,s)\,dy\,ds}\\
  \displaystyle{\qquad
  +\int^t_0\int_{\R} \d_y {\bf e}(y,t-s)(\mathit{Q_f}(U,\d_x U)+
  \dot \delta\, U)(y,s) dy ds, }
  \end{array}
 \label{predelta}
\end{equation}
following \cite{Z3, MaZ1, MaZ2, MaZ4},
and recalling Proposition \ref{greenbounds},
we obtain finally the {\it reduced equations}:
\begin{equation}
\begin{array}{l}
 \displaystyle{
  U(x,t)=\int_\R ({\cal H} + {\cal S} + {\cal R})(x,t;y)U_0(y)\,dy }\\
  \displaystyle{\qquad
  +\int^t_0 \int_\R {\cal H}(x,t-s;y) \big(\d_y (\mathit{Q_f}(U,\d_x U)+ \dot \delta U) + \ell_4^{+}
e^{-\eta y^+}
  q_r(U) \big)\,dy\,ds}\\
   \displaystyle{\qquad
  +\int_0^t \int_\R ({\cal S} + {\cal R})(x,t-s;y) \ell_4^{+} e^{- \eta y^+} q_r(U) \, dy \,ds} \\
  \displaystyle{\qquad
  -\int^t_0 \int_\R \d_y({\cal S} + {\cal R})(x,t-s;y)
(\mathit{Q_f}(U, \d_x U)+ \dot \delta U) dy \, ds, }
\end{array}
\label{reduced}
\end{equation}
and, differentiating (\ref{predelta}) with respect to $t$,
and recalling Corollary \ref{e-r}:
\begin{equation}
 \begin{array}{l}
 \displaystyle{
  \dot \delta (t)=-\int_\R \d_t {\bf e}(y,t) U_0(y)\,dy }\\
 \displaystyle{\qquad
  +\int^t_0\int_{\R} \d_y \d_t {\bf e}(y,t-s)(\mathit{Q_f}(U, \d_x U)+
  \dot \delta U)(y,s)\,dy\,ds. }
 \end{array}
\label{deltadot}
\end{equation}\begin{equation}
 \begin{array}{l}
  \displaystyle{
  \delta (t)=-\int_\R {\bf e}(y,t) U_0(y)\,dy }\\
  \displaystyle{\qquad
  +\int^t_0\int_{\R} \d_y {\bf e}(y,t-s)(\mathit{Q_f}(U, \d_x U)+
  \dot \delta\, U)(y,s) \, dy \, ds, }
  \end{array}
 \label{delta}
\end{equation}
\medskip

Define
$$
\aligned
\zeta(t)
&:= \sup_{0\le s \le t,\, 2\le p\le \infty}
\Big( \, |U(\cdot, s)|_{L^p}(1+s)^{\frac{1}{2}(1-\frac{1}{p})}
+ |\dot \delta (s)|(1+s)^{\frac{1}{2}} 
+ |\delta (s)|\Big).
\endaligned
$$
We shall establish:

{\it Claim. There exists $c_0 > 0,$ such that, for all $t\ge 0$ for which a solution exists with
$\zeta$ uniformly bounded by some fixed, sufficiently small constant,
there holds}
$$
\zeta(t) \leq c_0(|U_0|_{L^1 \cap H^3} + \zeta(t)^2).
$$
\medskip
{}{}From this result, it follows by continuous induction that,
provided 
$$
|U_0|_{L^1\cap H^3} < \frac{1}{4} c_0^2,  
$$
there holds 
\begin{equation} \label{bd}
\zeta(t) \leq 2 c_0 |U_0|_{L^1\cap H^3}
\end{equation}
for all $t\geq 0$ such that $\zeta$ remains small.
For, by standard short-time theory/local well-posedness in $H^3$, 
and the standard principle of continuation, 
there exists a solution  $U\in H^3$
on the open time-interval for which $|U|_{H^3}$ remains bounded,
and on this interval $\zeta$ is well-defined and continuous. 
Now, let $[0,T)$ be the maximal interval on which $|U|_{H^3}$
remains strictly bounded by some fixed, sufficiently small constant $\delta>0$.
By Lemma \ref{aux}, we have
$$
\aligned
|U(t)|_{H^3}^2&\le C |U(0)|^2_{H^3}e^{-\theta t}
+C\int_0^t e^{-\theta (t-\tau )}(|U|_{L^2}^2+ |\dot\delta|^2)(\tau)\, d\tau \\
&\le C'\big(|U(0)|^2_{H^3}+ \zeta(t)^2\big) (1+t)^{-\frac{1}{2}},
\endaligned
$$
for some $C, C', \theta > 0,$ and so the solution continues so long as $\zeta$ remains small,
with bound \eqref{bd}, at once yielding existence and the claimed 
sharp $L^p\cap H^3$ bounds, $2\le p\le \infty$.

\medskip

{\it Proof of Claim.}
We must show that each of the quantities 
$$
|U|_{L^p}(1+s)^{\frac{1}{2}(1-\frac{1}{p})}, \quad |\dot \delta|(1+s)^{\frac{1}{2}}, \quad \mbox{and }\quad|\delta|$$
is separately bounded by
$
C(|U_0|_{L^1\cap H^3} + \zeta(t)^2),$
for some $C>0$, all $0\le s\le t$, so long as $\zeta$ remains
sufficiently small.
By \eqref{reduced}--\eqref{deltadot} and the triangle inequality, we have
\begin{eqnarray}
 |U|_{L^p} & \leq & {\rm {\rm I_a}} + {\rm {\rm I_b}} + {\rm I_c} + {\rm I_d}, \nonumber \\
 |\dot \delta(t)| & \leq & {\rm I{\rm I_a}} + {\rm I{\rm I_b}}, \nonumber \\
 |\delta(t)| & \leq & {\rm II{\rm I_a}} + {\rm II{\rm I_b}},\nonumber
 \end{eqnarray}
where ${\rm I_a}$ is the $L^p$ norm of the first integral term in the right-hand side of \eqref{reduced}, ${\rm I_b}$ the second term, etc., and similarly ${\rm II_a}$ is the modulus of the first term in the right-hand side of \eqref{deltadot}, etc. 

We estimate each term in turn, following the approach of \cite{MaZ1, MaZ4}.

 The linear term ${\rm I_a}$  satisfies bound
$$
{\rm I_a} \le  C|U_0|_{L^1\cap L^p} (1+t)^{-\frac{1}{2}(1-\frac{1}{p})},
$$ 
by Proposition \ref{greenbounds} and Corollary \ref{Lpbds}. 

Likewise, applying the bounds of Corollary \ref{Lpbds}, we have
$$
\aligned
{\rm I_b}
&\le
C\zeta(t)^2
\int_0^t e^{-\eta (t-s)}(1+s)^{-\frac{1}{2}}ds\\
&\le
C\zeta(t)^2 (1+t)^{-\frac{1}{2}},\\
\endaligned
$$
and (taking $q=2$ in the second estimate of Corollary \ref{Lpbds})
$$
\aligned
{\rm I_c}+ {\rm I_d}
&\le
C\int_{0}^t e^{-\eta(t-s)}
(|U|_{L^\infty}+|\d_x U|_{L^\infty} + |\dot \delta|)|U|_{L^p}(s) ds\\
&\quad+
C\int_0^{t} (t-s)^{-\frac{3}{4}+\frac{1}{2p}}
(|U|_{L^\infty}+|\dot \delta|)|U|_{H^1}(s) ds\\
&\le
C\zeta(t)^2
\int_{0}^t e^{-\eta(t-s)}
(1+s)^{-\frac{1}{2}(1-\frac{1}{p})-\frac{1}{2}}ds\\
&\quad +
C\zeta(t)^2
\int_0^{t} (t-s)^{-\frac{3}{4}+\frac{1}{2p}}
(1+s)^{-\frac{3}{4}}ds\\
&\le
C\zeta(t)^2 (1+t)^{-\frac{1}{2}(1-\frac{1}{p})},\\
\endaligned
$$
$$
\aligned
{\rm II_a}
&\le&
| \d_t {\bf e}(t)|_{L^\infty} |U_0|_{L^1}
&\le&
C|U_0|_{L^1} (1+t)^{-\frac{1}{2}}
\endaligned
$$
and 
$$
\aligned
{\rm II_b}
&\le
\int^t_0
| \d_y \d_t {\bf e}(t-s)|_{L^2}
(|U|_{L^\infty}+|\dot \delta|)|U|_{H^1}(s) ds\\
&\le
C\zeta(t)^2 
\int^t_0
(t-s)^{-\frac{3}{4}} (1+s)^{-\frac{3}{4}} ds\\
&\le
C\zeta(t)^2 (1+t)^{-\frac{1}{2}},
\endaligned
$$
while
$$
\aligned
{\rm III_a}
&\le&
| {\bf e}(t)|_{L^\infty_y} |U_0|_{L^1}
&\le&
C|U_0|_{L^1} 
\endaligned
$$
and 
$$
\aligned
{\rm III_b}
&\le
\int^t_0
| \d_y {\bf e}(t-s)|_{L^2} 
(|U|_{L^\infty}+|\dot \delta|)|U|_{H^1}(s) ds\\
&\le
C\zeta(t)^2 
\int^t_0
(t-s)^{-\frac{1}{4}} (1+s)^{-\frac{3}{4}} ds\\
&\le
C\zeta(t)^2. 
\endaligned
$$

This completes the proof of the claim, establishing \eqref{nonest}
for $p\ge 2$.  The remaining bounds $1\le p<2$ then
follow by a bootstrap argument as described in \cite{Z2}; we omit
the details.
\end{proof}

\section{Bifurcation: Proof of Theorem \ref{PH}}\label{bifsection}

 Given two Banach spaces $X$ and $Y,$ we denote by 
${\cal L}(X,Y)$ the space of linear continuous applications from $X$ to $Y,$
and let ${\cal L}(X) := {\cal L}(X,X).$ We use \eqref{wS} to denote weighted Sobolev spaces and norms. Let $x^+ := \max(0,x).$ Given a constant $\eta > 0$ and a weight function $\o > 0,$
 define subspaces of ${\cal S}'(\R)$ by
   $$  L^1_{\eta^+}  := \{ f, \, e^{\eta (\cdot)^+} f\in L^1\}, 
  \quad  L^1_{\o} := \{ f, \, \o f\in L^1 \}, 
   \quad L^1_{\o,\eta^+} := \{ f, \, \o f \in L^1_{\eta^+}\}.
   $$

\begin{defi} \label{B-spaces} Given a constant $\eta > 0$ and a weight function $\o$ satisfying \eqref{ass-o}, define the Banach spaces 
 $${\cal B}_1, {\cal B}_2, X_1, X_2 \subset {\cal D}'(\R;\C^3_v \times \C_z)$$
  by 
$$ \begin{aligned} {\cal B}_1 & :=  & H^1, \quad {\cal B}_2 & :=  H^1 \cap  (\d_x L^1 \times L^1_{\eta^+}), \\
\nonumber X_1 & := & H^2_{\o}, \quad X_2 & :=  H^2_{\o} \cap (\d_x L^1_\o \times L^1_{\o,\eta^+}). 
\end{aligned}
$$
with norms
$$\begin{aligned} \|(v,z)\|_{{\cal B}_1} & :=\|(v,z)\|_{H^1}, \\
                 \|(\d_x v, z)\|_{{\cal B}_2} & := \| (\d_x v, z) \|_{H^1} +\|v\|_{L^1} + \|e^{\eta (\cdot)^+} z\|_{L^1},\\ \|(v,z)\|_{X_1} & :=
\|(v,z) \|_{H^2_\o}, 
\\ \|(\d_x v, z)\|_{X_2} & := \| (\d_x v, z) \|_{H^2_\o} + \| \o v \|_{L^1} + \| \o e^{\eta (\cdot)^+} z\|_{L^1}.
\end{aligned}$$
\end{defi}

 In particular, $X_2 \hookrightarrow X_1 \hookrightarrow \CalB_1,$ with $\|\cdot\|_{{\cal B}_1}\le \|\cdot\|_{X_1} \le \| \cdot \|_{X_2},$ and $X_2 \hookrightarrow \CalB_2 \hookrightarrow {\cal B}_1,$
 with $\| \cdot \|_{{\cal B}_1} \le \|\cdot\|_{{\cal B}_2}\le \|\cdot\|_{X_2},$ and the unit ball in $X_1$ is closed in $\CalB_1$.

\subsection{The perturbation equations} \label{sec:perturb}

If $\tilde U^\e$ solves \eqref{rNS-symb} with initial datum $\tilde U^\e_{|t=0} = \bar U^\e + U_0^\e,$ then the perturbation variable $U(\e,x,t):=\tilde U^\eps (x,t) - \bar U^\eps(x)$
satisfies
\be\label{nonlin} \left\{ \begin{aligned}
\d_t U-L(\eps) U & = \d_x \mathit{Q_f}(\e, U, \d_x U) + \mathit{Q_r}(\e,U),
\\
U(\e,x,0) & =U_0^\e(x). \end{aligned} \right.
\ee
The nonlinear term $\mathit{Q_f}$ satisfies \eqref{Qbound}, while $\mathit{Q_r}$ satisfies Lemma \ref{r-term}.

\subsection{Coordinatization} \label{sec:coord} 
 Let $\varphi^\e_\pm$ be the eigenfunctions of $L(\e)$ associated with the bifurcation eigenvalues $\g(\e) \pm i \t(\e),$ and let $\tilde \varphi^\e_\pm$ be the corresponding left eigenfunctions. We know from Section \ref{sec:ess} that $(\g \pm i \t)(\e) \in \C \setminus {\cal C}_- \cup {\cal C}_+,$ hence $\varphi^\e_\pm$ decay exponentially at both $-\infty$ and $+\infty,$ in particular, if in \eqref{ass-o} $\theta_0$ is small enough, then $\varphi^\e_\pm \in H^2_\o.$ Let $\Pi$ be the $L^2$-projection onto $\mbox{span}(\varphi^\e_\pm)$ parallel to $\mbox{span}(\tilde \varphi^\e_\pm)^\perp.$ Decomposing
$$ %
U= u_{11} \Re \varphi_+^\eps +  u_{12} \Im \varphi_+^\eps + u_2, \qquad U^\e_0 = a_{1} \Re \varphi_+^\e + a_2 \Im \varphi_+^\e + b,$$
where %
$u_{11} \Re \varphi_+^\eps +  u_{12} \Im \varphi_+^\eps$ and $a_1 \Re \varphi_+^\e + a_2 \Im \varphi_+^\e$ belong to $\mbox{span}(\phi^\e_\pm)$
(so that, in particular, $u_{1j}$ and $a_j$ are real),
and coordinatizing as $(u_1, u_2)$, $u_1 :=(u_{11}, u_{12}) \in \R^2,$ we obtain after a brief calculation that $U$ solves \eqref{nonlin} if and only if its coordinates solve the system 
\be \label{PHeq2} \left\{ \begin{aligned} 
\d_t u_1&= \bp \gamma(\eps) & \tau(\eps)\\
-\tau(\eps) & \gamma(\eps)\ep u_1  + \Pi N(\e, u_1, u_2),\\
\d_t u_2&= (1 - \Pi) L(\eps) u_2 + (1 - \Pi) N(\e,u_1, u_2),\\
u_{1|t=0} & = a, \\
u_{2|t=0} & = b,
\end{aligned}
\right.
\ee
where $$N(\e,u_1,u_2) := (\d_x \mathit{Q_f} + \mathit{Q_r})(\e, \bar U^\e, U).$$

 Given $T_0 > 0,$ there exist $\zeta_0 > 0$ and $C > 0,$ such that, if $|a| + \| b \|_{H^2_\o} < \zeta_0,$ the initial value problem \eqref{PHeq2}  possesses a unique solution 
$(u_1, u_2)(a,b,\e)\in C^0([0,T_0], \RR^2\times H^2_\o)$
 satisfying
\begin{equation}
\begin{aligned}\label{truncshort}
C^{-1}|a|- C\|b\|_{H^2_\o}^2&\le |u_1(t)|\le C(|a|+ \|b\|_{H^2_\o}^2),\\
\|u_2(t)\|_{H^2_\o}& \le C(\|b\|_{H^2_\o}+|a|^2),\\
\|\d_{(a,b)}(u_1, u_2)(t)\|_{{\cal L}(\R^2\times H^1,H^1)}& \le C.
\end{aligned}
\end{equation}
(For more details on the initial value problem \eqref{PHeq2} and estimate \eqref{truncshort}, see \cite{TZ2}, Proposition 4.2.)

\subsection{Poincar\'e return map} \label{sec:poinc}

We express the period map
$(a,b,\eps)\to \hat b := u_2(a,b,\eps, T)$
as a discrete dynamical system 
\begin{equation} 
\label{PHdyn}
\hat  b=  {\bf S}(\e,T) b+ \tilde N(a,b,\eps, T),
\end{equation}
where
$${\bf S}(\eps, T):=
e^{T (1 - \Pi)L(\eps)}$$
is the linearized
solution operator in 
$v$
and
$$%
\tilde N(a,b,\eps, T) := \int_0^{T} {\bf S}(\e,T-s)
(1 - \Pi) N(\e,u_1, u_2)(s)ds\\
$$
the 
difference between nonlinear and linear solution operators. 

Evidently, periodic solutions of (\ref{PHeq2}) with period $T$ 
are fixed
points of the period map (equilibria of (\ref{PHdyn})) or,
equivalently, zeros of the displacement map

$$%
\Delta (a,b,\eps, T) :=  ({\bf S}(\e,T) - \Id) b+ N_2(a,b,\eps, T).
$$

\subsection{Lyapunov-Schmidt reduction}

We now carry out a nonstandard Lyapunov--Schmidt reduction
following the ``inverse temporal dynamics'' framework of \cite{TZ2}, tailored for the
situation
that $\Id - {\bf S}(\e,T)$ is not uniformly invertible, or,
equivalently, the spectrum of $(1 - \Pi)L(\e)$ is not bounded away
from $\{j\pi/T\}_{j\in \ZZ}.$
In the present situation, $(1 - \Pi)L(\e)$ has both a $1$-dimensional
kernel (a consequence of (H4), see Section \ref{remarkson}) and essential spectra accumulating at $\lambda=0$, and no
other purely imaginary spectra, so that
$\Id - {\bf S}(\e,T)$ inherits the same properties;
see \cite{TZ2} for further discusssion.

Our goal, and the central point of the analysis, is to solve $\Delta(a,b,\e,T)=0$
for $b$ as
a function of $(a,\eps,T)$, eliminating the transverse variable
and reducing to a standard planar bifurcation problem in the oscillatory
variable $a$.
A ``forward'' temporal dynamics technique would be to rewrite
$\Delta=0$ as a fixed point map
\be\label{fwd0}
b =  {\bf S}(\e,T) b+ \tilde N(a,b,\eps, T),
\ee
then to substitute for $T$ an arbitrarily large integer multiple $jT$.
In the strictly stable case $\Re \sigma((1 - \Pi) L) \le - \theta<0$, $\|{\bf S}(\e,j T)\|_{{\cal L}(X_1)}<\frac{1}{2}$ for $j$ sufficiently large.
Noting that $\tilde N$ is quadratic in its dependency, we would have
therefore contractivity of (\ref{fwd0}) with respect to $b$,
yielding the desired reduction.
However, in the absence of a spectral gap between $\sigma((1 - \Pi) L)$
and the imaginary axis, $\|{\bf S}(\e,jT)\|_{{\cal L}(X_1)}$ does not
decay, and may be always greater than unity; thus, this naive
approach does not succeed.

The key idea in \cite{TZ2} is to rewrite $\Delta=0$ instead in ``backward'' form
\be\label{backwd}
b =  \left(\Id - {\bf S}(\e,T)\right)^{-1} \tilde N(a,b,\eps, T),
\ee
then show that $\left(\Id-{\bf S}(\e,T)\right)^{-1}$ is well-defined and bounded on $\Range \, \tilde N$,
thus obtaining contractivity by quadratic dependence of $\tilde N$.
Since the right inverse $\left(\Id-{\bf S}(\e,T)\right)^{-1}\tilde N$ is formally given by $\sum_{j=0}^\infty {\bf S}(\e,jT) \tilde N$
this amounts to establishing convergence:
a stability/cancellation estimate.
Quite similar estimates appear in the nonlinear stability theory,
where the interaction of linearized evolution ${\bf S}$ and nonlinear
source $\tilde N$ are likewise crucial for decay.
The formulation (\ref{backwd}) can be viewed also as a ``by-hand''
version of the usual proof of the standard Implicit Function Theorem \cite{TZ2}.

 \begin{lem}\label{returnsetup}
Under the assumptions of Theorem {\rm \ref{PH}}, if the constant $\eta$ in Definition {\rm \ref{B-spaces}} satisfies $\eta < \eta_0,$ where $\eta_0$ was introduced in Corollary {\rm \ref{profdecay},} then 
$$\tilde N: \quad (a,b,\eps,T) \in \RR^2\times X_1\times \RR^2 \to \tilde N(a,b,\eps,T)\in
X_2,$$
is quadratic order, and $C^1$ as a map from $\RR^2\times {\cal B}_1\times \RR^2$ to ${\cal B}_2$ for
$\|b\|_{X_1}$
uniformly bounded, with
\begin{equation} \label{Nbound}
\begin{aligned}\|\tilde N\|_{X_2} & \le C(|a|+\|b\|_{X_1})^2, \\
\|\partial_{(a,b)}
\tilde N\|_{{\cal L}(\R^2 \times \CalB_1,\CalB_2)}
& \le C(|a|+\|b\|_{H^2}).\\ 
\|\partial_{(\eps, T)} \tilde N\|_{{\cal L}(\R^2, {\cal B}_2)}
& \le C(|a|+\|b\|_{H^2})^2,
 \end{aligned}
\end{equation}
 \end{lem}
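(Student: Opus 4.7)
My plan is to reduce the proof to three ingredients: (i) nonlinear pointwise bounds on $N(\e, u_1, u_2) = \partial_x Q_f + Q_r$ in weighted Sobolev and $L^1$ spaces, (ii) decay/mapping properties of the linear semigroup $\mathbf{S}(\e, T-s) = e^{(T-s)(1-\Pi)L(\e)}$ on those spaces, and (iii) the conservation-law structure of the $v$-component of \eqref{rNS-symb}, which forces the fluid part of $\tilde N$ to be a perfect $x$-derivative with primitive in $L^1_\omega$.

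For (i): the coordinate bounds \eqref{truncshort}, combined with $\varphi^\e_\pm \in H^2_\omega$ (valid once $\theta_0$ in \eqref{ass-o} is taken small enough, since $\gamma(\e) \pm i\tau(\e)$ lies off the essential spectrum described in Section \ref{sec:ess} and its eigenfunctions therefore decay exponentially at $\pm\infty$), give $\|U(s)\|_{H^2_\omega} \le C(|a| + \|b\|_{X_1})$ uniformly in $s \in [0,T]$. The pointwise bound \eqref{Qbound} on $Q_f$ together with the submultiplicativity of $\omega$ from \eqref{ass-o} yields $\|Q_f\|_{H^1_\omega \cap L^1_\omega} \le C(|a| + \|b\|_{X_1})^2$ by Cauchy--Schwarz, splitting $\omega U \cdot V = (\omega^{1/2}U)(\omega^{1/2}V)$; Lemma \ref{r-term} supplies the analogous bound for $Q_r$ in $H^1_\omega \cap L^1_{\omega,\eta^+}$, the built-in factor $e^{-\eta_0 x^+}$ absorbing the weight $e^{\eta x^+}$ whenever $\eta<\eta_0$.

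For (ii): $(1-\Pi)L(\e)$ has essential spectrum accumulating at the origin and a simple zero eigenvalue, but by hypothesis no other spectrum in $\Re\lambda \ge 0$; the pointwise Green function bounds of Proposition \ref{greenbounds}, applied to the complementary semigroup, yield the convolution estimates of Corollary \ref{Lpbds}, and Corollary \ref{s-r} supplies the $e^{-\eta x^+}$-weighted control of the reactive component. For (iii): the $v$-equation of \eqref{rNS-symb} is in divergence form, so $\partial_x Q_f$ and the fluid projection of the reactive source preserve their divergence structure under the flow of $L(\e)$; moreover the bifurcation eigenfunctions satisfy $\int (\varphi^\e_\pm)_v \, dx = 0$ (since $L\varphi^\e_\pm = (\gamma\pm i\tau)(\e)\varphi^\e_\pm$ with $\tau(0) \ne 0$ and the fluid rows of $L$ are $x$-derivatives), so the projector $(1-\Pi)$ preserves the conservation form as well. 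Combining (i)--(iii) yields $\|\tilde N\|_{X_2} \le C(|a| + \|b\|_{X_1})^2$.

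The $C^1$ regularity $\R^2 \times \CalB_1 \times \R^2 \to \CalB_2$ follows by differentiating under the integral: linearization of $N$ produces a term linear in $\partial_{(a,b)} U$, whose $\CalB_1 \to \CalB_1$ operator norm is controlled by the third line of \eqref{truncshort}; the $(\e,T)$-derivatives use additionally the smooth $\e$-dependence of $\bar U^\e$ from Corollary \ref{profdecay} and standard differentiability of parabolic semigroups in the time parameter, together with $\partial_T \mathbf{S}(\e,T) = (1-\Pi)L(\e) \mathbf{S}(\e,T)$ which inherits the Green function bounds. The principal obstacle I anticipate is the $\partial_x L^1_\omega$ structure of the fluid component of $\tilde N$: having zero integral is weaker than possessing an $L^1_\omega$ primitive, so the argument must invoke the detailed pointwise structure of the scattered and residual parts $\mathcal{S}, \mathcal{R}$ from Proposition \ref{greenbounds}, whose fluid projections are already presented as $x$-derivatives of Gaussian-type kernels whose primitives decay in $x$ at rates compatible with the admissible growth of $\omega$ (controlled by $\theta_0 < \eta_0$).
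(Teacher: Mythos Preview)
Your outline identifies the correct ingredients --- the quadratic structure of $N$, the role of Lemma \ref{r-term} for the reactive decay, and the conservation form of the $v$-equation for the $\partial_x L^1_\omega$ structure --- and in that sense is aligned with the paper's approach, which simply invokes the variational bounds of \cite{TZ3} together with Lemma \ref{r-term}. However, your execution is heavier than necessary and leaves the key step imprecise.

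First, you invoke the long-time pointwise Green function bounds of Proposition \ref{greenbounds} and Corollaries \ref{Lpbds}, \ref{s-r} for what is a \emph{bounded-time} integral over $[0,T]$. Those asymptotics are irrelevant here; short-time parabolic regularity in $H^2_\omega$ (requiring only $|\omega'|+|\omega''|\le C\omega$, i.e.\ \eqref{ass-o}(ii), so that the weight commutes with derivatives up to lower order) suffices. Similarly, your use of the submultiplicativity \eqref{ass-o}(iii) for the product estimate is unnecessary: from $\omega^{-1}\in L^\infty$ alone one has $\|fg\|_{L^2_\omega}\le \|f\|_{L^\infty}\|g\|_{L^2_\omega}\le C\|f\|_{H^1_\omega}\|g\|_{L^2_\omega}$, which is exactly what the paper records when it notes that only $\omega^{-1}\in L^\infty$ and \eqref{ass-o}(ii) are used.

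Second, your ``principal obstacle'' --- passing from zero mass to an $L^1_\omega$ primitive for the fluid component --- is real, but your proposed fix (reading off $x$-primitives from the pointwise structure of $\mathcal S,\mathcal R$) is both overkill and incomplete for short time where $\mathcal S\equiv 0$. The direct route, as in \cite{TZ3}, is to pass to the antiderivative variable: write $\hat w(x,t)=\int_{-\infty}^x w_v$ for $w=e^{t(1-\Pi)L}(1-\Pi)N$; then $\hat w$ satisfies a coupled parabolic system (with $w_z$) whose coefficients are bounded and whose initial data $(\,(Q_f)_v+\hbox{exp.\ decaying},\, e^{-\eta_0 x^+}q_r+\partial_x(Q_f)_z\,)$ lie in $L^1_\omega$, after which a short-time $L^1_\omega$ energy estimate closes. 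You also do not address why $(\tilde N)_z\in L^1_{\omega,\eta^+}$: this is where Lemma \ref{r-term} genuinely enters, and it requires tracking that the non-divergence part of the $z$-nonlinearity carries the factor $e^{-\eta_0 x^+}$ while the divergence part, once differentiated, yields an $L^1_\omega$ contribution that the leftward transport $-s\partial_x$ in the linearized $z$-equation does not spread to $+\infty$ on bounded time intervals.
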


 \begin{proof} We use the variational bounds of \cite{TZ3} (see Propositions 5 and 6, \cite{TZ3}) and Lemma \ref{r-term}. Note that, in \eqref{ass-o}, only $\o^{-1} \in L^\infty$ and \eqref{ass-o}(ii) were used at this point. 
 \end{proof} 

\subsubsection{Pointwise cancellation estimate}\label{cancellation}

 We now develop the key cancellation estimates, adapting the pointwise semigroup
methods of \cite{ZH,MaZ3,Z2} to the present case. 

Our starting point is the inverse Laplace transform representation \eqref{inverseLT2}. 
Deforming the contour using analyticity of ${\cal G}_\lambda$
across oscillatory eigenvalues $\lambda_\pm(\eps)$
we obtain ${\cal G}=\tilde {\cal G} + {\cal O}$, where 
$$
\CalO(x,t;y):= e^{\l_+(\e) t}\varphi_+^\e(x) \tilde \varphi_+^{\e}(y)^{\tt tr}
+
e^{ \l_-(\e) t}\varphi_-^\e(x) \tilde \varphi_-^{\e}(y)^{\tt tr}
$$
is the sum of the residues
of the integrand at $\lambda_\pm$ (the right- and left-eigenfunctions $\varphi^\e_\pm$ and $\tilde \varphi^\e_\pm$ are defined in Section \ref{sec:coord}). The Green function $\tilde {\cal G}$ is the kernel of the integral operator ${\bf S}(\e,t)$ defined in Section \ref{sec:poinc}. Note that, under the assumptions of Theorem \ref{PH}, the Evans function associated with $(1 - \Pi) L(\e)$ satisfies \eqref{D}, so that, by Remark \ref{rem-greenbounds}, Proposition \ref{greenbounds} applies to $\tilde {\cal G}.$

For $\nu, \nu_0 > 0,$ let $\Gamma$ be the counterclockwise arc of circle $\partial B(0,r)$ ($r$ as in Proposition \ref{G_l}) connecting
$-\nu-i \nu_0$ and $-\nu+i \nu_0.$ If $\nu$ and $\nu_0 >0$ are sufficiently small, then $\Gamma$ is entirely contained in the resolvent set of $(1  - \Pi) L(\e),$ and 
$\tilde {\cal G}$ can be decomposed as ${\cal G}_{\rm I} + {\cal G}_{{\rm II}},
$
with
\ba \label{I-II}
 {\cal G}_{{\rm I}}(\e,x,t;y) & := \frac{1}{2\pi i} \int_\Gamma e^{\lambda t} {\cal G}_\lambda (\e,x,y) \,d\lambda\\
 {\cal G}_{{\rm II}}(\e,x,t;y) & := \frac{1}{2\pi i} \text{\rm P.V.}
\Big(
\int_{-\nu-i\infty}^{-\nu -i \nu_0}
+
\int_{-\nu+ i\nu_0}^{-\nu + i\infty}
\Big)
e^{\lambda t} {\cal G}_\lambda (\e,x,y) \, d\lambda.
\ea
 
Let ${\bf S}_{{\rm I}}$ and ${\bf S}_{{\rm II}}$ denote the integral operators with respective kernels ${\cal G}_{{\rm I}}$ and ${\cal G}_{{\rm II}},$ so that ${\bf S} = {\bf S}_{\rm I} + {\bf S}_{\rm II},$ 
 and let
$\Omega := (-\bar \e_0, \bar \e_0) \times (0,+\infty),$
for some $\bar \e_0 > 0.$ 

\begin{rem} \label{decayG_l} The contour $\Gamma$ being contained in the resolvent set of $L,$ the elementary bound holds:
 $$ | \d_y {\cal G}_\l| \leq C e^{-\theta_\nu |x-y|}, \qquad \l \in \Gamma,$$
 for some $\theta_\nu > 0$ depending on $\nu.$ See for instance Proposition 4.4, {\rm \cite{MaZ3}.}
 \end{rem} 

Our treatment of the high-frequency term follows \cite{TZ3}: 

\begin{lem} \label{hf-lem} Under the assumptions of Theorem {\rm \ref{PH}}, the sequence of operators with kernel
 $ \sum_{n=0}^N {\cal G}_{\rm II}(\e, nT)$
 is absolutely convergent in ${\cal L}(H^1),$ uniformly in $(\e,T) \in \Omega.$
 \end{lem}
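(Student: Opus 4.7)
The plan is to reduce the claimed absolute convergence to a pointwise-in-$t$ exponential decay estimate
$$
\| {\bf S}_{\rm II}(\e, t)\|_{\CalL(H^1)} \le C e^{-\nu t},
\qquad t \ge t_0 > 0,
$$
uniform in $\e \in (-\bar\e_0, \bar\e_0)$. Once this is in hand, the partial sums telescope into a geometric series,
$$
\sum_{n=1}^N \| {\bf S}_{\rm II}(\e, nT)\|_{\CalL(H^1)}
\le C \sum_{n=1}^\infty e^{-\nu n T} \le \frac{C}{1 - e^{-\nu T}},
$$
which is finite uniformly in $T$ on any compact subset of $(0,+\infty)$, in particular in the regime of periods $T$ near $T_* = 2\pi/\tau(0)$ relevant to the Hopf bifurcation. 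The $n=0$ contribution requires only that ${\bf S}_{\rm II}(\e,0)$ define a bounded operator on $H^1$, which follows from the resolvent bound of Remark \ref{decayG_l} on the bounded arc $\Gamma$ together with the vertical-contour bound described below.

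To obtain the semigroup estimate I would work directly from the contour formula \eqref{I-II}(ii): the factor $e^{\lambda t}$ on the vertical rays $\Re\lambda = -\nu$ produces $e^{-\nu t}$ outside the integral, so it suffices to bound the oscillatory integral
$$
\int_{|\xi|\ge \nu_0} e^{i\xi t}\, {\cal G}_{-\nu+i\xi}(\e,\cdot,\cdot)\, d\xi
$$
in $\CalL(H^1)$, uniformly in $\e$. The choice of $\nu,\nu_0$ small ensures that the whole contour sits in the resolvent set of $(1-\Pi)L(\e)$: by Kawashima dissipativity (H3) the essential spectrum is confined to the left of the vertical ray for $|\Im\lambda|$ sufficiently large, while the only point spectrum in $\Re\lambda \ge -\nu$ near $\e = 0$ is the pair $\lambda_\pm(\e)$, which has been removed by the spectral projection $(1-\Pi)$. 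High-frequency resolvent bounds in the hyperbolic--parabolic setting developed in \cite{MaZ3} then yield the requisite decay in $|\xi|$, with an additional factor recovered by integration by parts in $\xi$ against the oscillatory factor $e^{i\xi t}$ for $t \ge t_0$.

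The main obstacle is the partially hyperbolic structure of \eqref{rNS-symb}: the specific-volume equation carries no viscosity, so the resolvent does not smooth in that component and the naive bound $\|{\cal G}_\lambda\|_{\CalL(H^1)} \to 0$ as $|\Im\lambda|\to\infty$ fails there. This is exactly the obstruction handled in \cite{MaZ3, TZ3} by isolating the hyperbolic ``delta-function'' contribution---an avatar of the term ${\cal H}$ appearing in Proposition \ref{greenbounds}---which is treated separately by direct computation and shown to contribute an exponentially decaying $\CalL(H^1)$-bounded piece on its own, leaving a regular remainder to which the parabolic-type high-frequency estimates apply. A minor check is that the rank-two spectral correction $(1-\Pi)$ does not spoil the bounds: since $\varphi_\pm^\e$ and $\tilde\varphi_\pm^\e$ lie in $H^2_\o$ with exponential spatial decay, subtracting the associated projector is a bounded, smoothing, finite-rank perturbation, so the estimates for ${\bf S}$ transfer without loss to ${\bf S}_{\rm II}$.
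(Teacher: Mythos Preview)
Your proposal is correct and follows essentially the same route as the paper: both arguments extract the exponential factor $e^{-\nu t}$ from the contour shift, isolate the hyperbolic delta-function piece (the $h\,\tau_{x+st}\delta$ term, bounded on $H^1$ with coefficient $h$ decaying like $e^{-\theta t}$) from the remainder of kernel type $Ce^{-\theta(|x-y|+t)}$, and then sum the resulting geometric series. The paper states this via the explicit pointwise kernel decomposition \eqref{g2}, while you phrase it at the operator level, but the content is the same; your remark about the finite-rank correction $(1-\Pi)$ being harmless is also implicit in the paper's setup.
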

 
 \begin{proof} Starting from the description of the resolvent kernel given in Proposition \ref{G_l}, we find by the same inverse Laplace transform estimates that give terms ${\cal H}$ and ${\cal R}$ in Proposition \ref{greenbounds}, that the high-frequency resolvent kernel ${\cal G}_{\rm II},$ defined in \eqref{I-II}, may be  expressed as
 \begin{equation}\label{g2} {\cal G}_{\rm II} = C e^{-\theta(|x-y| + t)} + h \t_{x + st} \delta,\end{equation}
 where $C$ and its space-time derivatives are bounded, $\theta > 0,$ and $h \t_{x + st} \delta$ is a generic hyperbolic term; in particular $h$ has the form \eqref{def-h} and satisfies \eqref{bds-R}(i). The Lemma follows. 
 \end{proof}
 
Next we turn to the low-frequency component of $\tilde {\cal G}.$ Its fluid terms are handled as in \cite{TZ3}: 

\begin{lem} \label{lf-lem} Under the assumptions of Theorem {\rm \ref{PH}}, the sequence of operators with kernel $\sum_{n=0}^N {\cal G}_{\rm I}(\e,nT)$ converges in ${\cal L}(\d_x L^1, H^1),$ uniformly with respect to $(\e,T) \in \O.$
   \end{lem}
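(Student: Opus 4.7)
My plan is to transfer the $x$-derivative from the input to the kernel by integration by parts, exploit the resulting cancellation of the pole of ${\cal G}_\lambda$ at $\lambda=0$, then deform the contour $\Gamma$ into $\Re \lambda < 0$ and sum the resulting geometric series. Concretely, for $f = \partial_x g$ with $g \in L^1(\R)$, one writes
\[
\int_\R {\cal G}_{\rm I}(\e, x, nT; y)\, \partial_y g(y)\, dy = -\int_\R \partial_y {\cal G}_{\rm I}(\e, x, nT; y)\, g(y)\, dy,
\]
so the claim reduces to uniform convergence of $\sum_{n=0}^N \partial_y {\cal G}_{\rm I}(\e, nT)$ in ${\cal L}(L^1, H^1)$.

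The crucial cancellation is that $\partial_y {\cal G}_\lambda$ extends analytically to the whole disk $B(0,r)$. In the decomposition ${\cal G}_\lambda = {\cal E}_\lambda + {\cal S}_\lambda + {\cal R}_\lambda$ of Proposition \ref{G_l}, only ${\cal E}_\lambda$ is singular at $\lambda=0$, and
\[
\partial_y {\cal E}_\lambda = -\lambda^{-1} (\bar U^\e)'(x) \sum_{j \in \tilde{\bf D}(y)} [c^0_{j,s_y}]\, \ell_j^{s_y{\tt tr}}\, \mu_j^{s_y}(\lambda)\, e^{-y \mu_j^{s_y}(\lambda)}
\]
is regular at $\lambda = 0$ because $\mu_j^{s_y}(\lambda) = O(\lambda)$ for slow modes by the expansions of Section \ref{sec:low-f}. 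Analogously, the $\lambda^{\alpha'}$ factor in Proposition \ref{G_l}'s bound for $\partial_x^\alpha \partial_y^{\alpha'} {\cal R}_\lambda$ shows $\partial_y {\cal R}_\lambda$ is regular at $0$, as is $\partial_y {\cal S}_\lambda$.

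I then use the inverse Laplace representation to write
\[
\sum_{n=0}^N \partial_y {\cal G}_{\rm I}(\e, \cdot, nT; \cdot) = \frac{1}{2\pi i} \int_\Gamma \Big(\sum_{n=0}^N e^{n\lambda T}\Big) \partial_y {\cal G}_\lambda\, d\lambda,
\]
and deform $\Gamma$ to the chord $C$ joining its endpoints $-\nu \pm i\nu_0$; by analyticity of both factors in the enclosed region, Cauchy's theorem applies with no residue. On $C$, $\Re\lambda \leq -\nu$, so $|\sum_{n=0}^N e^{n\lambda T}| \leq 1/(1-e^{-\nu T})$, uniformly in $N$ and in $T$ on any compact subset of $(0,+\infty)$, which suffices for the Hopf regime where $T$ stays close to $2\pi/\tau(0)$. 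The ${\cal L}(L^1, H^1)$ bound on $\partial_y {\cal G}_\lambda$ for $\lambda \in C$ then follows from the pointwise exponential decay in $|x-y|$ of $\partial_x^\alpha \partial_y^{\alpha'} {\cal G}_\lambda$ (itself a consequence of the forward/dual mode bounds of Lemma \ref{fwd-dual}) together with Minkowski's inequality applied to the convolution-type action; absolute convergence of the integral over $C$ then delivers the lemma. I expect the main obstacle to be the $L^2$-in-$x$ bound on the limiting kernel (the $H^1$-target part of the norm), where the pointwise exponential decay of the modes must be combined with the extra $\lambda$-gain produced by $\partial_y$ in Proposition \ref{G_l} in order to close the estimate.
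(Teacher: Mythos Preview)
Your cancellation observation --- that $\partial_y{\cal G}_\lambda$ loses the pole at $\lambda=0$ because the slow rates $\mu_j^{s_y}(\lambda)$ vanish linearly --- is correct and is indeed the mechanism behind the lemma. The gap is in the next step: deforming $\Gamma$ to the chord $C=\{-\nu+it:\,|t|\le\nu_0\}$. Recall that $\Gamma$ is the arc of $\partial B(0,r)$ passing through the \emph{right} half-plane; it lies in the resolvent set only because the essential spectrum near $0$ is bounded by parabolas tangent to the imaginary axis (Section~\ref{sec:ess}). The chord $C$, by contrast, necessarily crosses those parabolas near $\Im\lambda=0$ and so meets the essential spectrum. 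On that portion of $C$ the analytically continued kernel $\partial_y{\cal G}_\lambda$ exists (by the Gap Lemma/Proposition~\ref{G_l}) but does \emph{not} have exponential decay in $|x-y|$: the slow forward and dual modes satisfy $\Re\mu_j^{\pm}(\lambda)\sim\Re\lambda/a_j^{\pm}$, and for $\Re\lambda=-\nu<0$ the signs flip, so the contributions $e^{x\mu_j^{s_x}(\lambda)-y\mu_k^{s_y}(\lambda)}$ in ${\cal S}_\lambda$, ${\cal E}_\lambda$ grow like $e^{c\nu|x|}$ or $e^{c\nu|y|}$. Remark~\ref{decayG_l} is stated only for $\lambda\in\Gamma$ precisely for this reason. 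Consequently $\int_\R\partial_y{\cal G}_\lambda(\cdot,y)g(y)\,dy$ is not in $L^2_x$ for generic $g\in L^1$ and $\lambda\in C$, and your ${\cal L}(L^1,H^1)$ bound on $C$ fails. No contour with $\Re\lambda\le-\nu'<0$ joining the two endpoints can avoid the essential spectrum, so this route cannot be repaired by a different deformation.

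The paper instead keeps the integral on $\Gamma$ (where Remark~\ref{decayG_l} does apply) and writes $\sum_{n=0}^{N-1}e^{n\lambda T}=(1-e^{N\lambda T})/(1-e^{\lambda T})$, splitting into a boundary term ${\rm I}$ (independent of $N$, bounded directly on $\Gamma$) and a term ${\rm II}_N$. The factor $(1-e^{\lambda T})^{-1}=\lambda^{-1}T^{-1}(1+O(\lambda))$ reintroduces a $\lambda^{-1}$, but paired with $\partial_y{\cal G}_\lambda$ this yields an integrand of the same type as ${\cal G}_\lambda$ itself; then the Riemann saddle-point estimates that produce Proposition~\ref{greenbounds} show ${\rm II}_N$ converges in $H^1$ as $N\to\infty$. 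The summability is thus extracted from \emph{time-domain} decay of ${\cal E}+{\cal S}+{\cal R}$ at $t=NT$, not from a spectral gap on a deformed contour.
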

   
\begin{proof} We argue as in the proof of Proposition 3 of \cite{TZ3}. Let $f \in L^1.$ By \eqref{I-II}, $\sum_{n=0}^{N-1} \d_y {\cal G}_{\rm I} f$ decomposes into ${\rm I} - {\rm II}_N,$ where 
 $$ \begin{aligned} {\rm I} & = \frac{1}{2 i \pi} \int_\R \int_\Gamma \frac{1}{1 - e^{\l T}} \d_y {\cal G}_\l f \, d\l \, dy, \\
   {\rm II}_N & = \frac{1}{2i \pi} \int_\R \int_\Gamma \frac{e^{NT \l}}{1 - e^{\l T}} \d_y {\cal G}_\l f\, d\l \, dy.
   \end{aligned}
   $$
 For small $\nu$ and $\l \in \Gamma,$ $(1 - e^{\l T})^{-1} = \l^{-1} T^{-1} (1 + O(\l)).$ 
 
 The boundary term ${\rm I}$ is independent of $N$ and is seen to belong to $H^1$ by Remark \ref{decayG_l}.

 By \eqref{E-}-\eqref{S+}, $\l^{-1} \d_y {\cal E}_\l$ and $\l^{-1} \d_y {\cal S}_\l$ have the same form as ${\cal E}_\l$ and ${\cal S}_\l.$ By Proposition \ref{G_l}, $\l^{-1} \d_y {\cal R}_\l$ behaves like the sum of ${\cal R}_\l$ and a pole term of form $\l^{-1} e^{-\theta |x-y|}.$ Hence, by the same Riemann saddle-point estimates used to bound ${\cal G}$ in Proposition \ref{greenbounds}, we find that
  \begin{equation} \label{decI-II} \int_\Gamma \frac{e^{\l N T}}{1 - e^{\l T}} \d_y {\cal G}_\l \, d\l = \big({\cal E} + {\cal S} + {\cal R}\big)(\e,NT),
  \end{equation}
  up to a constant (independent of $N$) term of the form $C e^{-\theta|x-y|},$ where the space-time derivatives of $C$ are uniformly bounded. This constant term satisfies the same bound as term ${\rm I}.$ 
  
  In \eqref{decI-II}, ${\cal E},$ ${\cal S},$ ${\cal R}$ denote generic excited, scattered and residual terms of form \eqref{def-E}, \eqref{S-}-\eqref{S+} and \eqref{bds-R}(ii).
By dominated convergence, 
 $$ \mbox{$H^1$-}\underset{N \to \infty}{\lim} \int_\R {\cal E}(\e,N T) f(y) \, dy$$
  exists and is equal to a sum of terms of the form 
  \begin{equation} \label{contrib-e} C(\e,T) (\bar U^\e)' \int_\R f(y) \, dy.
  \end{equation} Besides, by \eqref{S-}-\eqref{S+} and \eqref{bds-R},
 \begin{equation} \label{contrib-s} \left\| \int_\R ({\cal S} + {\cal R})(\e, NT) f(y) \, dy \right\|_{H^1} \leq C (NT)^{-frac{1}{4}} \| f \|_{L^1}.
 \end{equation} This proves convergence in $H^1$ of the sequence ${\rm II}_N.$  
\end{proof}

 We examine finally the contribution to the series $\sum_n {\bf S}(\e, nT)$ of the new (not present in \cite{TZ3}), reactive terms. 

\begin{lem} \label{r-lem} Under the assumptions of Theorem {\rm \ref{PH}}, 
 the sequence of operators with kernels $\sum_{n=0}^N {\cal G}_{\rm I}(\e,nT) \ell_4^{+}$ is absolutely convergent in ${\cal L}(L^1_{\eta^+}, H^1),$ uniformly with respect to $(\e,T) \in \O.$
  \end{lem}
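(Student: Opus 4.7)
The plan is to show that $\|{\cal G}_{\rm I}(\e, t) \ell_4^+\|_{{\cal L}(L^1_{\eta^+}, H^1)}$ decays exponentially in $t$, uniformly in $\e$ near zero, so that $\sum_n \|{\cal G}_{\rm I}(\e, nT) \ell_4^+\|$ is dominated by a geometric series whose sum is finite for each $T > 0$ and uniformly bounded on compact subintervals of $T \in (0, +\infty)$.

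The starting point, from Corollary \ref{e-r}, is that ${\cal E}_\l \ell_4^+ \equiv 0$; by Proposition \ref{G_l}, the reactive-direction resolvent kernel ${\cal G}_\l \ell_4^+ = ({\cal S}_\l + {\cal R}_\l) \ell_4^+$ is therefore analytic in $B(0, r)$, with in particular no pole at $\l = 0$. I would invert the Laplace transform along the arc $\Gamma$ by the same Riemann saddle-point and contour deformation estimates used in the proof of Proposition \ref{greenbounds} to derive $({\cal S} + {\cal R})\ell_4^+$ from $({\cal S}_\l + {\cal R}_\l)\ell_4^+$. The difference between the $\Gamma$-integral and the full low-frequency contour integral reduces to the $\nu$-tails constituting ${\cal G}_{\rm II}$; by \eqref{g2} and the identity $h \ell_4^+ \equiv 0$ (which kills the hyperbolic delta term), this contributes only an exponentially $t$-decaying correction. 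Thus the pointwise bounds of Corollary \ref{s-r} transfer to ${\cal G}_{\rm I}$, giving
$$ |e^{-\eta y^+} \d_x^\a {\cal G}_{\rm I}(\e, x, t; y) \ell_4^+| \le C e^{-\theta_1 t}(e^{-\eta|x-y|/2} + e^{-(x-y)^2/Mt}), \quad \a \in \{0, 1\},$$
for some $\theta_1 > 0$ (positive by $s > 0$ and $\eta > 0$, as in the proof of Corollary \ref{s-r}).

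By Young's inequality for convolutions, these pointwise bounds yield
$$ \|{\cal G}_{\rm I}(\e, t) \ell_4^+ f\|_{H^1} \le C(1 + t^{1/4}) e^{-\theta_1 t} \|f\|_{L^1_{\eta^+}} \le C' e^{-\theta_1' t} \|f\|_{L^1_{\eta^+}}$$
for some $0 < \theta_1' < \theta_1$, uniformly in $\e$ near zero. Summing over $t = nT$ then gives
$$ \sum_{n=0}^\infty \|{\cal G}_{\rm I}(\e, nT) \ell_4^+\|_{{\cal L}(L^1_{\eta^+}, H^1)} \le \frac{C}{1 - e^{-\theta_1' T}},$$
finite for each $T > 0$ and uniformly bounded on compact subintervals of $(0, +\infty)$, as required.

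The hard part will be the transfer step: justifying that the pointwise bounds of Corollary \ref{s-r} apply to the truncated contour integral ${\cal G}_{\rm I}(\e, t) \ell_4^+$ rather than to the full $({\cal S} + {\cal R})(\e, t) \ell_4^+$ produced by Proposition \ref{greenbounds}. The essential inputs are the analyticity of ${\cal G}_\l \ell_4^+$ at $\l = 0$ provided by Corollary \ref{e-r}, together with the negative sign of the reactive characteristic $-s$, which renders the Gaussian contributions $e^{-(x-y+st)^2/4dt}$ exponentially small on the right half-line $y \geq 0$ once paired with the weight $e^{-\eta y^+}$; the remainder is a parallel adaptation of the Riemann saddle-point arguments of \cite{MaZ3,LRTZ} already invoked in the proof of Proposition \ref{greenbounds}.
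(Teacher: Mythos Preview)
Your proposal is correct and follows essentially the same route as the paper: identify ${\cal G}_{\rm I}\ell_4^+$ with $({\cal S}+{\cal R})\ell_4^+$ up to exponentially decaying corrections (using ${\cal E}\ell_4^+\equiv 0$ from Corollary~\ref{e-r} and $h\ell_4^+\equiv 0$ from \eqref{def-h}), then apply Corollary~\ref{s-r} and convolve to obtain the $H^1$ bound $C(1+t^{1/4})e^{-\theta_1 t}\|f\|_{L^1}$, which sums. The paper compresses the transfer step you spell out into a single line, writing the identity \eqref{contrib-r} directly.
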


\begin{proof} Let $f \in L^1.$ By \eqref{I-II}, Proposition \ref{greenbounds} and Corollary \ref{e-r}, the low-frequency kernel ${\cal G}_{\rm I}$ satisfies  
\begin{equation} \label{contrib-r}  \int_\R e^{- \eta y^+} {\cal G}_{\rm I}(\e,t) \ell_4^+ f(y) \, dy = \int_\R e^{-\eta y^+} \big({\cal S} + {\cal R}\big)(\e,t) \ell_4^+ f(y) \, dy,\end{equation}
and, by Corollary \ref{s-r},
$$ \left\| \int_\R e^{-\eta y^+} ({\cal S} + {\cal R})(\e,t) \ell_4^{+} f(y) \, dy \right\|_{H^1} \leq C (1 + t^{\frac{1}{4}}) e^{-\theta_1 t} \|f\|_{L^1},$$
 and the upper bound defines for $t = NT$ an absolutely converging series in $H^1.$
 \end{proof}

From Lemmas \ref{hf-lem}, \ref{lf-lem} and \ref{r-lem} and the fact that ${\bf S}(\e,T) \in {\cal L}({\cal B}_1),$ for all $(\e,T) \in \Omega,$ we can conclude that, under the assumptions of Theorem {\rm \ref{PH}}, the operator $\Id -{\bf S}(\e,T)$ has a right inverse 
$$
\left(\Id- {\bf S}(\e,T)\right)^{-1}:= \sum_{n \geq 0} {\bf S}(\e, n T): \qquad {\cal B}_2 \to {\cal B}_1,$$
that belongs to ${\cal L}({\cal B}_2, {\cal B}_1),$ locally uniformly in $(\e,T) \in \Omega.$ 

 We will need the following regularity result for the right inverse: 

\begin{lem} \label{der-lem} Under the assumptions of Theorem {\rm \ref{PH}}, the operator $(\Id- {\bf S}(\e,T))^{-1}$ is $C^1$ in $(\e, T) \in \O,$ with respect to the 
${\cal L}({\cal B}_2, {\cal B}_1)$ norm on ${\cal B}_2.$
\end{lem}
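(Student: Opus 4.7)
\smallskip

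\noindent\textbf{Proof plan for Lemma \ref{der-lem}.}

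The inverse $(\Id - {\bf S}(\e,T))^{-1}$ is obtained as the sum of the series $\sum_{n\ge 0}{\bf S}(\e,nT)$, whose convergence in ${\cal L}(\CalB_2,\CalB_1)$ was established in Lemmas \ref{hf-lem}, \ref{lf-lem}, \ref{r-lem}. I will prove $C^1$ regularity by splitting along ${\bf S}={\bf S}_{\rm I}+{\bf S}_{\rm II}$ (the low- and high-frequency pieces from \eqref{I-II}) and showing separately that each partial sum is $C^1$ in $(\e,T)\in\O$ with values in ${\cal L}(\CalB_2,\CalB_1)$ and that the partial sums converge together with their $(\e,T)$-derivatives, uniformly on compact subsets of $\O$.

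For the high-frequency series $\sum_n {\bf S}_{\rm II}(\e,nT)$, term-by-term differentiation works. By Lemma \ref{hf-lem} and \eqref{g2}, $\|{\bf S}_{\rm II}(\e,nT)\|_{{\cal L}(H^1)}\le Ce^{-\theta nT}$, and smooth dependence of the profile on $\e$ (Corollary \ref{profdecay}) together with standard perturbation of ${\cal G}_\l$ gives the same exponential bound for $\d_\e{\bf S}_{\rm II}(\e,nT)$. The identity $\d_T{\bf S}_{\rm II}(\e,nT)=n(1-\Pi(\e))L(\e){\bf S}_{\rm II}(\e,nT)$ produces an extra factor $n$, but this is dominated by the exponential factor $e^{-\theta nT}$, so the differentiated series converges absolutely in ${\cal L}(H^1)\subset {\cal L}(\CalB_2,\CalB_1)$, locally uniformly in $\O$.

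For the low-frequency series the naive $n$-factor is genuinely problematic, and one must bypass termwise differentiation via the contour integral representation used in the proofs of Lemmas \ref{lf-lem} and \ref{r-lem}. Decomposing $f\in \CalB_2$ as $f=(\d_y F,f_z)$ with $F\in L^1$ and $f_z\in L^1_{\eta^+}$, these proofs show
$$
\sum_{n\ge 0}{\bf S}_{\rm I}(\e,nT)f \;=\; \frac{1}{2\pi i}\int_\Gamma \frac{1}{1-e^{\l T}}\,K_\l(\e)f\;d\l,
$$
where $K_\l(\e)f(x):=-\!\int \d_y{\cal G}_\l(\e,x,y)F(y)\,dy+\int {\cal G}_\l(\e,x,y)\ell_4^{+}e^{-\eta y^+}f_z(y)\,dy$ is bounded from $\CalB_2$ into $\CalB_1$ uniformly in $\l\in\Gamma$: the fluid piece by Remark \ref{decayG_l} (which absorbs the pole of ${\cal G}_\l$ at $\l=0$ via $\d_y$), the reactive piece by Corollary \ref{s-r}. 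By continuous dependence of the spectrum of $(1-\Pi(\e))L(\e)$ on $\e$, the arc $\Gamma$ may be chosen independent of $(\e,T)$ on a compact neighborhood, and on such a neighborhood $|1-e^{\l T}|$ is bounded below on $\Gamma$.

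I then differentiate under the integral sign: $\d_T(1-e^{\l T})^{-1}=-\l e^{\l T}/(1-e^{\l T})^2$ is bounded uniformly on $\Gamma$ for $T$ in a compact subinterval of $(0,+\infty)$, while $\d_\e K_\l(\e)$ retains the same structure as $K_\l(\e)$ by smoothness of $\bar U^\e$ in $\e$ (Corollary \ref{profdecay}, $j=1$) and standard resolvent perturbation theory for ${\cal G}_\l(\e)$ on the compact set $\Gamma$. Both $\d_T$ and $\d_\e$ integrals therefore define bounded operators $\CalB_2\to\CalB_1$ depending continuously on $(\e,T)$; combined with the high-frequency contribution this yields the claimed $C^1$ regularity. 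The main obstacle is the uniform $\CalB_2\to\CalB_1$ bound on $K_\l(\e)$ on $\Gamma$, which is exactly what makes the integral representation play the role of a formal Abel-summation of the divergent series $\sum n\,(1-\Pi)L\,{\bf S}(\e,nT)$ one would obtain from crude termwise $\d_T$-differentiation.
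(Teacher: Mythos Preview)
Your approach is essentially correct and follows the same high/low-frequency split as the paper. For the high-frequency part and for the $\e$-derivative you do exactly what the paper does (the paper phrases the latter via the variational equation $(L-\l)\d_\e{\cal G}_\l = -(\d_\e L){\cal G}_\l$, which is your ``standard resolvent perturbation''). The only genuine difference is in the $\d_T$ treatment of the low-frequency piece.

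The paper's argument there is slightly different in emphasis and worth noting: it observes that the inverse Laplace representation \eqref{inverseLT2} gives $\d_t{\cal G}$ the resolvent kernel $\l{\cal G}_\l$, so that $\d_T{\bf S}_{\rm I}(\e,nT)$ is built from $\l{\cal G}_\l$ rather than ${\cal G}_\l$. The extra factor of $\l$ cancels the pole in ${\cal E}_\l$ and improves the low-frequency behavior of ${\cal S}_\l,{\cal R}_\l$, so one may simply rerun the proofs of Lemmas \ref{lf-lem} and \ref{r-lem} to obtain convergence of $\sum_n \d_T{\bf S}_{\rm I}(\e,nT)$ directly. Your route---differentiating the closed-form contour integral---leads to the same estimate, since $\d_T(1-e^{\l T})^{-1}=-\l e^{\l T}(1-e^{\l T})^{-2}$ also carries an extra factor of $\l$ near $\l=0$.

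Two minor points. First, your closed-form formula for $\sum_{n\ge 0}{\bf S}_{\rm I}(\e,nT)f$ is incomplete: in the proof of Lemma \ref{lf-lem} the partial sums are written as ${\rm I}-{\rm II}_N$, and ${\rm II}_N$ does not tend to zero but to the excited contribution \eqref{contrib-e}, which must be subtracted from your ${\rm I}$-term. This extra piece is visibly $C^1$ in $(\e,T)$ (by Corollary \ref{profdecay} and the explicit errfn form of ${\bf e}$), so the omission is harmless once noted. Second, for the reactive piece on $\Gamma$ you should invoke Corollary \ref{e-r} (which kills the pole: ${\cal E}_\l\ell_4^+\equiv 0$) together with a kernel bound of the type in Remark \ref{decayG_l}, rather than Corollary \ref{s-r}, which is a time-domain statement. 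Alternatively---and more simply---since the reactive series in Lemma \ref{r-lem} converges absolutely with exponential rate $e^{-\theta_1 nT}$, termwise $\d_T$-differentiation works there directly, exactly as for the high-frequency part.
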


\begin{proof} Note that, by \eqref{inverseLT2}, $\d_t {\cal G}$ has kernel $\l {\cal G}_\l;$ in particular, the small $\l$ (low-frequency) estimates of the proofs of Lemmas \ref{lf-lem} and \ref{r-lem} imply the convergence of the sequence $\sum_{n=0}^N \d_T {\bf S}_{\rm I}(\e,nT)$ in ${\cal L}({\cal B}_2,{\cal B}_1).$ The contribution of $\d_T {\cal G}_{\rm II}(\e, n T)$ is handled as in Lemma \ref{hf-lem}, by \eqref{g2} and \eqref{bds-R}(i) with $k = 1.$

 Bounds for $\e$-derivatives are handled as in \cite{TZ3}, using either the variational equation $(L - \l) \d_\e {\cal G}_\l = - (\d_\e L) {\cal G}_\l,$ or the $\d_\e {\cal G}$ bounds of Proposition 3.11 from \cite{TZ2}.
\end{proof}

 Note that the $\e$-derivative bound \eqref{Nbound}(iii) is stated on a proper subspace of ${\cal B}_1,$ namely $X_1.$ In this respect, the following Lemma, asserting boundedness of the right inverse on $X_2 \hookrightarrow {\cal B}_2,$ in ${\cal L}(X_2,X_1)$ norm, is key to the reduction procedure of the following Section. (See Remark \ref{rem-fin}.)
 
\begin{lem} \label{crux-cor} Under the assumptions of Theorem {\rm \ref{PH}}, $(\Id- {\bf S}(\e,T))^{-1}$ belongs to ${\cal L}(X_2,X_1),$ for all $(\e,T) \in \Omega.$
\end{lem}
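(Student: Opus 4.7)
The natural approach is to rerun the convergence arguments of Lemmas \ref{hf-lem}, \ref{lf-lem}, and \ref{r-lem} in the weighted spaces $X_2 \to X_1$, exploiting the submultiplicativity property \eqref{ass-o}(iii):
\be\label{crux-subm}
\omega(x) \leq C\,\omega(x-y)\,\omega(y).
\ee
If $\CalK(x,y)$ is any of the kernels appearing in Proposition \ref{greenbounds}, \eqref{crux-subm} allows us to write
$\omega(x) \int \CalK(x,y) f(y)\,dy = \int \CalK_\omega(x,y)\, \tilde f(y)\,dy$
with $\CalK_\omega(x,y) := \omega(x-y) \CalK(x,y)$ and $\tilde f := \omega f$. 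Hence weighted $X_2 \to X_1$ bounds on $\sum_n {\bf S}(\e,nT)$ reduce, via \eqref{ass-o}(i) (which gives $\omega(x-y) \le e^{\theta_0 \langle x-y\rangle}$), to the unweighted $\CalB_2\to\CalB_1$ estimates already established, provided $\theta_0$ is small enough that the modified kernels $\CalK_\omega$ still satisfy bounds of the same form as $\CalK$.

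Concretely, I would verify term-by-term that each of the Gaussian, exponential, and hyperbolic decay factors absorbs an extra $e^{\theta_0|x-y|}$:
the Gaussian $e^{-(x-y)^2/Mt}$ becomes $e^{-(x-y)^2/M't}$ with $M'$ slightly larger; the exponential $e^{-\theta|x-y|}$ becomes $e^{-(\theta-\theta_0)|x-y|}$, still exponentially decaying for $\theta_0 < \theta$; and the hyperbolic kernel $h\,\tau_{x+st}\delta$ acquires a factor $\omega(-st)$ which is $O(e^{\theta_0 s t})$, dominated by the pre-existing decay $\|h\|\lesssim e^{-\theta t}$ from \eqref{bds-R}(i) when $\theta_0$ is small compared to $\theta/s$. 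This gives the weighted analogues of Lemmas \ref{hf-lem} and of the ``residual'' part of Lemma \ref{lf-lem}. The regularity upgrade from $H^1$ to $H^2$ is free: Proposition \ref{greenbounds} covers $\partial_x^\alpha$ up to $\alpha=2$.

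The excited contribution \eqref{contrib-e} is the one place where the input norm on $\partial_x L^1_\omega$ (rather than $L^1_\omega$) is used: integration by parts transfers a derivative from $f$ onto the excited kernel, producing a convergent series whose limit has the form $C(\e,T)(\bar U^\e)' \int g$; the needed $\omega (\bar U^\e)' \in H^2$ follows from Corollary \ref{profdecay} precisely because $\theta_0 < \eta_0$. For the reactive kernel ${\cal G}_I \ell_4^+$, by Corollary \ref{s-r} the relevant estimate is already packaged with an $e^{-\eta y^+}$ factor. One verifies via \eqref{crux-subm} that
\be\nn
\omega(x) e^{-\eta y^+} |\CalK(x,y)| \leq C\, \omega(x-y)\,|\CalK(x,y)| \cdot \omega(y) e^{-\eta y^+},
\ee
so multiplying the input $z$ by $\omega e^{\eta y^+}$ (the $L^1_{\omega,\eta^+}$ norm) and the modified kernel again satisfies Corollary \ref{s-r}-type bounds with a slightly worse constant when $\theta_0$ is small, keeping the series absolutely convergent in $H^2$.

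The main obstacle will be handling the scattered terms \eqref{S-}--\eqref{S+} involving the shifted Gaussians centered at $z_{k,\pm}^{j,\pm}$ and with mixed diffusion rates $\beta^{j,\pm}_{k,\pm}$: the argument $x-y$ does not appear cleanly, so absorbing $\omega(x-y)$ via the Gaussian requires a saddle-point--style decomposition depending on the sign and relative size of $|x|$ and $|y|$. However, the same geometric decomposition used in \cite{MaZ3} to derive these bounds from the Riemann saddle point method gives a Gaussian factor in an effective $(x-y)$-like variable, which absorbs $e^{\theta_0|x-y|}$ when $\theta_0$ is sufficiently small relative to $\min_{j,k}\beta^{j,\pm}_{k,\pm}/(\max|a_j^\pm|)$. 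This is the smallness condition on $\theta_0$ referred to in Remark \ref{rem-o}, and once it is imposed, the weighted series converges by dominated convergence exactly as in the unweighted case.
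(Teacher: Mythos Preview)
Your overall strategy—transport the unweighted estimates to weighted spaces via the submultiplicativity \eqref{ass-o}(iii)—works for the high-frequency piece, the boundary term ${\rm I}$, the constant pole term, and the reactive kernel, essentially as you describe. But it breaks down at exactly the point you flag as ``the main obstacle'': the scattered contributions ${\cal S}$ (and the corresponding part of ${\cal R}$) in ${\rm II}_N$ cannot be controlled in $H^2_\omega$. A representative term in \eqref{S-} is $t^{-1/2}e^{-(x-y-a t)^2/4\beta t}$ with $a=a_j^-\neq 0$; this Gaussian in $x-y$ is centered at $at$, not at $0$. At the peak, $|x-y|\sim |a|t$, so $\omega(x-y)\lesssim e^{\theta_0|a|t}$, and completing the square gives
\[
e^{\theta_0|x-y|}e^{-(x-y-at)^2/4\beta t}
\;\le\; e^{(|a|\theta_0+O(\theta_0^2))\,t}\,e^{-(x-y-a't)^2/4\beta t}.
\]
There is no compensating $e^{-\theta t}$ here—only the polynomial $t^{-1/4}$ decay of \eqref{contrib-s}—so for any admissible $\omega$ growing faster than a sufficiently small power (in particular the exponential weights allowed by \eqref{ass-o}), the weighted norm of the scattered part of ${\rm II}_N$ blows up as $N\to\infty$. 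The partial sums therefore do not converge in $X_1$, and your plan of rerunning the convergence in weighted norms cannot close.

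The paper's proof takes a genuinely different route that sidesteps this entirely. The right inverse has already been constructed as an $H^1$-limit in Lemmas \ref{hf-lem}--\ref{r-lem}; the point of \eqref{contrib-s} is that ${\cal S}+{\cal R}$ contributes \emph{zero} to that limit. Hence the limit operator, applied to $f\in X_2$, is given by an explicit formula (the ${\cal G}_{\rm II}$ piece, the boundary term ${\rm I}$, the constant pole term, the excited limit $C(\bar U^\e)'\int v$, and the reactive piece), and one checks directly that each of these lies in $H^2_\omega$ via the elementary convolution bound \eqref{cst-bd} and the estimate \eqref{last?}--\eqref{o-gr} for the reactive Gaussian. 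No weighted estimate on the scattered kernel is ever needed, because that kernel does not appear in the limit.
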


\begin{proof} The convolution bound 
\begin{equation} \label{cst-bd}
  \left\| \o^\frac{1}{2} \int_\R e^{-\theta |x-y|} f(y) \, dy \right\|_{L^2} \leq C \min\left(\| f \|_{L^2_\o}, \| f \|_{L^1_\o}\right),
  \end{equation}
  where $C$ depends on $\| \o^\frac{1}{2} e^{-\theta |\cdot|}\|_{L^1 \cap L^2},$ holds by \eqref{ass-o}(i) and \eqref{ass-o}(iii). It implies that the contributions of ${\cal G}_{\rm II},$ of ${\rm I}$ and of the constant pole term in ${\rm II}_N$ (see the proofs of Lemmas \ref{hf-lem} and \ref{lf-lem}) are all bounded in ${\cal L}(X_1).$
 
 The scattered and residual terms in ${\rm II}$ contribute nothing to the limit, by \eqref{contrib-s}.   

 We use again Corollary \ref{s-r} to handle the contribution of the reactive term. In \eqref{r-s-r}, there are two terms in the upper bound for $({\cal S} + {\cal R}) \ell_4^+.$ The first term is handled by \eqref{cst-bd}, and the second by  \begin{equation} \label{last?}
    e^{-\theta t} \left\| \o^\frac{1}{2} \int_\R e^{-|x-y|^2/Mt} f(y) \, dy \right\|_{L^2} \leq e^{-\theta t}\left\| \o^\frac{1}{2} e^{-|x-y|^2/Mt} \right\|_{L^2} \| f \|_{L^1_\o},
    \end{equation}
   noting that 
  \begin{equation} \label{o-gr} \begin{aligned} e^{-\theta t} \| \o^\frac{1}{2} e^{-|x-y|^2/Mt} \|_{L^2} & \leq C e^{-\theta_1 t} \left( \| \o^\frac{1}{2} e^{-|\cdot|^2/Mt} \|_{L^2_{|x| < t}} + \| \o^\frac{1}{2} e^{-|\cdot|^2/Mt} \|_{L^2_{t < |x|}} \right) \\
  & \leq C e^{-\theta_1 t} \left( t^\frac{3}{4} \o(t)^\frac{1}{2} + \| \o^\frac{1}{2} e^{-|\cdot|/M} \|_{L^2} \right), \end{aligned} \end{equation}  
  and the upper bound in \eqref{o-gr} defines for $t = NT$ an absolutely converging series, if $\theta_0$ in \eqref{ass-o} is small enough. We used in \eqref{o-gr} the growth assumption on $\o.$
  \end{proof} 
  
\begin{rem} \label{rem-o} The above proof shows that, in \eqref{ass-o}, we need in particular $\theta_0 < \frac{1}{2} \eta_0,$ where $\eta_0$ is the decay rate of the background profile (see Corollary {\rm \ref{profdecay}}). In addition, we need $\theta_0 < \min(\theta_\nu,\theta, \frac{2}{M}),$ where $\theta$ is the rate of decay in the upper bounds of Proposition {\rm \ref{greenbounds}} and $M$ is the diffusion rate in the upper bound of \eqref{r-s-r}{\rm (ii)}, and $\theta_0$ to be small enough so that $\varphi^\e_\pm \in H^2_\o.$
\end{rem}

\subsubsection{Reduction}

\begin{cor} \label{g+} Under the Assumptions of Theorem {\rm \ref{PH},} the equation
\begin{equation} \label{b} \Delta(a,b,\eps,T)=0
\qquad (a,b, \eps, T) \in \R^2 \times X_1 \times \RR^2,\end{equation}
where $\Delta$ is defined in Section {\rm \ref{sec:poinc}}, is equivalent to
$$b = \left(\Id - {\bf S}(\e,T)\right)^{-1} \tilde N(a,b,\e, T) +\omega$$
 for
$$ \omega \in \kernel(\Id -{\bf S}(\e,T)) \cap X_1.$$
\end{cor}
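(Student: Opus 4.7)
The statement is essentially a book-keeping consequence of what has just been established, and the proof should be short. My plan is as follows.

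First I would unpack the definition of $\Delta$ from Section \ref{sec:poinc}, namely $\Delta(a,b,\e,T) = ({\bf S}(\e,T) - \Id)b + \tilde N(a,b,\e,T)$, so that $\Delta = 0$ reads $(\Id - {\bf S}(\e,T))\, b = \tilde N(a,b,\e,T)$. By Lemma \ref{returnsetup}, the right-hand side lies in $X_2$ whenever $b \in X_1$, and by Lemma \ref{crux-cor} the right inverse $(\Id - {\bf S}(\e,T))^{-1} = \sum_{n \geq 0} {\bf S}(\e,nT)$ belongs to ${\cal L}(X_2, X_1)$; in particular $(\Id - {\bf S}(\e,T))^{-1}\tilde N(a,b,\e,T)$ is a well-defined element of $X_1$.

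For the forward implication, given $b \in X_1$ solving $\Delta = 0$, I would set
\[
\omega := b - (\Id - {\bf S}(\e,T))^{-1}\tilde N(a,b,\e,T) \in X_1,
\]
and apply $\Id - {\bf S}(\e,T)$: the right inverse property gives $(\Id - {\bf S}(\e,T))(\Id - {\bf S}(\e,T))^{-1}\tilde N = \tilde N$, so $(\Id - {\bf S}(\e,T))\omega = \tilde N - \tilde N = 0$, and thus $\omega \in \kernel(\Id - {\bf S}(\e,T)) \cap X_1$. For the reverse implication, starting from any $b$ of the asserted form with $\omega$ in the kernel, applying $\Id - {\bf S}(\e,T)$ to both sides and using again that the series furnishes a right inverse yields $(\Id - {\bf S}(\e,T))\,b = \tilde N(a,b,\e,T)$, i.e.\ $\Delta(a,b,\e,T) = 0$.

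The only non-routine step is keeping track of function spaces: we need that every quantity manipulated lies in $X_1$, for which the mapping property $(\Id - {\bf S}(\e,T))^{-1} : X_2 \to X_1$ from Lemma \ref{crux-cor} is essential, together with the fact that $\tilde N$ takes values in $X_2$ (Lemma \ref{returnsetup}). There is no contraction argument here: this corollary is merely the algebraic reformulation that sets the stage for the subsequent fixed-point/Lyapunov--Schmidt reduction, where the quadratic smallness of $\tilde N$ together with the uniform $X_2 \to X_1$ bound on the right inverse will be exploited to solve for $b$.
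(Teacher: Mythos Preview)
Your proposal is correct and follows exactly the paper's approach: the paper's own proof is a single sentence, ``A simple consequence of the definition in the above Section of the right inverse $\left(\Id - {\bf S}(\e,T)\right)^{-1}$,'' with a reference to \cite{TZ2} for details. You have simply written out those details---unwinding $\Delta=0$ as $(\Id-{\bf S})b=\tilde N$, using the right-inverse property of the series $\sum_{n\ge 0}{\bf S}(\e,nT)$, and invoking Lemmas \ref{returnsetup} and \ref{crux-cor} to keep everything in $X_1$---which is precisely what the paper intends.
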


\begin{proof} A simple consequence of the definition in the above Section of the right inverse $\left(\Id - {\bf S}(\e,T)\right)^{-1}.$ For more details, see the proof of Lemma 2.8 in \cite{TZ2}.
\end{proof}

 Note that, as a consequence of (H4), Section \ref{remarkson}, the kernel of $\Id-{\bf S}(\e,T)$ is of dimension one, for all $\e,T,$ generated by $(\bar U^\e)'.$

\begin{cor}\label{boundedmap}
Under the Assumptions of Theorem {\rm \ref{PH},} the map
$$
\CalT(a,b,\eps,T,\alpha)
:=\left(\Id-{\bf S}(\e,T)\right)^{-1} \tilde N(a,b,\eps,T) + \alpha (\bar U^\e)',
$$
is bounded from $\RR^2\times X_1\times \RR^{2+1}$ to $X_1,$ and $C^1$ from $\RR^2\times {\cal B}_1 \times \RR^{2+1}$ to $X_1,$ for
$|\alpha|$ bounded and $|a|+\|b\|_{X_1}+|\eps|$ sufficiently small, with
\ba  \nonumber 
\|\CalT\|_{X_1}&\le C(|a|+ \|b\|_{X_1}^2),\\
\|\partial_{(a,b)} {\cal T}\|_{{\cal L}(\R^2 \times {\cal B}_1, {\cal B}_1)}&
\le C(|a|+ \|b\|_{X_1}),\\
\|\partial_{T} {\cal T}\|_{{\cal B}_1}&
\le C(|a|^2+ \|b\|_{X_1}^2),\\
\|\partial_{\eps} \CalT\|_{{\cal B}_1}&
\le C(|a|^2+ \|b\|_{X_1}^2 +|\alpha|),\\
\|\partial_{\alpha} \CalT\|_{{\cal B}_1}&
\le C(|a|^2+ \|b\|_{X_1}^2 +1 ).\\
\ea
\end{cor}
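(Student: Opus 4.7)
The plan is to observe that $\CalT$ is the sum of two pieces, $\CalT_1 := (\Id - {\bf S}(\e,T))^{-1}\tilde N(a,b,\eps,T)$ and $\CalT_2 := \alpha (\bar U^\eps)'$, and verify each bound by composing the estimates from Lemmas \ref{returnsetup}, \ref{der-lem}, and \ref{crux-cor}. For the $X_1$ boundedness of $\CalT$, I combine the quadratic bound $\|\tilde N\|_{X_2} \le C(|a|+\|b\|_{X_1})^2$ from Lemma \ref{returnsetup} with $(\Id - {\bf S}(\e,T))^{-1} \in \mathcal{L}(X_2, X_1)$ from Lemma \ref{crux-cor}, giving $\|\CalT_1\|_{X_1} \le C(|a|+\|b\|_{X_1})^2$; then $\CalT_2 \in X_1$ because Corollary \ref{profdecay} guarantees exponential decay of $(\bar U^\eps)'$ at rate $\eta_0$, and the smallness condition $\theta_0 < \eta_0$ on the weight (Remark \ref{rem-o}) ensures $\omega^{1/2}(\bar U^\eps)' \in H^2 \cap L^1$.

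For the $C^1$ regularity of $\CalT$ with respect to all arguments, I switch to the coarser topology $\CalB_1, \CalB_2$, because Lemma \ref{returnsetup} only asserts $C^1$-differentiability of $\tilde N$ as a map $\R^2\times \CalB_1 \times \R^2 \to \CalB_2$, not in the $X_2$ topology. Then $(\Id - {\bf S}(\eps,T))^{-1}\in\mathcal{L}(\CalB_2,\CalB_1)$ (by Lemmas \ref{hf-lem}, \ref{lf-lem}, \ref{r-lem}) and its $C^1$ dependence on $(\eps,T)$ in $\mathcal{L}(\CalB_2,\CalB_1)$ (Lemma \ref{der-lem}) yield $\CalT_1 \in C^1(\R^2\times\CalB_1\times\R^2, \CalB_1)$ by the chain/product rule. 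The partial derivatives evaluate as
\begin{align*}
\d_{(a,b)}\CalT_1 &= (\Id-{\bf S})^{-1} \d_{(a,b)}\tilde N,\\
\d_T\CalT_1 &= [\d_T(\Id-{\bf S})^{-1}]\tilde N + (\Id-{\bf S})^{-1}\d_T\tilde N,\\
\d_\eps\CalT_1 &= [\d_\eps(\Id-{\bf S})^{-1}]\tilde N + (\Id-{\bf S})^{-1}\d_\eps\tilde N,
\end{align*}
and the stated bounds in $\CalB_1$ follow by combining the corresponding bounds in \eqref{Nbound} with the operator norms of $(\Id-{\bf S})^{-1}$ and its $(\eps,T)$-derivatives: $\|\d_{(a,b)}\CalT_1\|_{\mathcal{L}(\R^2\times\CalB_1,\CalB_1)}\le C(|a|+\|b\|_{H^2})$ comes from \eqref{Nbound}(ii), while the linear-in-$\tilde N$ terms in $\d_T\CalT_1$ and $\d_\eps\CalT_1$ are quadratic in $(|a|,\|b\|_{X_1})$ via $\|\tilde N\|_{\CalB_2}\le \|\tilde N\|_{X_2}\le C(|a|+\|b\|_{X_1})^2$.

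For $\CalT_2$, $\d_\alpha\CalT_2 = (\bar U^\eps)' \in \CalB_1$ is bounded uniformly, $\d_\eps\CalT_2 = \alpha\d_\eps(\bar U^\eps)'$ is bounded in $\CalB_1$ by $C|\alpha|$ via Corollary \ref{profdecay} with $j=1$, and $\d_{(a,b,T)}\CalT_2 = 0$. Summing these contributions gives exactly the displayed bounds on $\d_{(a,b)}\CalT$, $\d_T\CalT$, $\d_\eps\CalT$, $\d_\alpha\CalT$.

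The main obstacle, already defused by the preceding subsection, is the absence of a spectral gap for $(1-\Pi)L(\eps)$: we need $(\Id - {\bf S}(\eps,T))^{-1}$ to act boundedly not just on $\CalB_2\to\CalB_1$ (for the regularity argument) but on the smaller $X_2\to X_1$ to get the $X_1$ bound with no loss, since ultimately the fixed-point iteration producing $b$ must close in the weighted space $X_1$. This is supplied by Lemma \ref{crux-cor}, whose proof depends crucially on the smallness condition $\theta_0<\min(\eta_0,\theta_\nu,\theta,2/M)$ from Remark \ref{rem-o}. Given these ingredients, the proof reduces to linearity/chain-rule bookkeeping and requires no further estimates.
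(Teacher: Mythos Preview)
Your proof is correct and follows essentially the same approach as the paper: the paper's own proof is a one-line citation of Lemma \ref{returnsetup}, the results of Section \ref{cancellation}, and the remark that $(\bar U^\eps)'$ generates $\kernel(\Id-{\bf S}(\e,T))$, and you have simply unpacked that citation by writing out the chain-rule bookkeeping explicitly. One small point: your bound $\|\CalT_2\|_{X_1}\le C|\alpha|$ does not literally fit under the stated first inequality $\|\CalT\|_{X_1}\le C(|a|+\|b\|_{X_1}^2)$, but this is an omission in the paper's statement rather than in your argument, as confirmed by the $|\alpha|$ term appearing in the subsequent bound \eqref{betabds} for $\|\beta\|_{X_1}$.
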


\begin{proof}
Follows from Lemma \ref{returnsetup}, the results of Section \ref{cancellation}, and the above remark on the kernel of $\Id-{\bf S}(\e,T).$ 
\end{proof}

\begin{rem} \label{rem-fin} Without Lemma {\rm \ref{crux-cor}} but with Lemmas {\rm \ref{hf-lem}} to {\rm \ref{der-lem}}, we could see ${\cal T}$ has a $C^1$ map from ${\cal B}_1$ to ${\cal B}_1$ (for fixed $a,\e,T,\a$), with quadratic bound $\| {\cal T}\|_{{\cal B}_1} \leq C \| b \|_{{\cal B}_1}^2,$ and derivative bound $\| \d_b {\cal T}\|_{{\cal L}({\cal B}_1} \leq C \| b \|_{{\cal B}_1}.$ This would be sufficient to prove existence of a fixed point $b \in {\cal B}_1$ as in the following Proposition, but not to prove regularity of the fixed point with respect to $\e,$ precisely because the $\e$-derivative bound \eqref{Nbound}{\rm (iii)} requires more regularity in $b,$ by $\d_\e e^{t L(\e)} = e^{t L(\e)} \d_\e L$ (see also Remark 3.12, {\rm \cite{TZ2}}). Without regularity of the fixed point of ${\cal T},$ we could not use the standard implicit function theorem in Section {\rm \ref{sec:fin}.} These issues were discussed in detail in Section 2.3 of {\rm \cite{TZ2}}. 
 \end{rem}

\begin{prop}\label{LSprop} Under the assumptions of Theorem {\rm \ref{PH}},
there exists a function $\b (a,\eps, T, \alpha)$, bounded
from $\RR^{4+1}$ to $X_1$ and $C^1$ from $\RR^{4+1}$ to ${\cal B}_1$,
such that
$$\Delta(a, \b(a,\eps,T,\alpha), \eps, T)\equiv 0,$$
\ba\label{betabds}
\|\beta\|_{X_1} +
\|\partial_{(\eps, T)}\beta\|_{{\cal L}(\R^2, {\cal B}_1)}&\le C(|a|^2 + |\alpha|),\\
\|\partial_{a}\beta\|_{{\cal L}(\R^2, {\cal B}_1)}&\le C|a|,\\
\|\partial_{\a}\beta\|_{{\cal L}(\R, {\cal B}_1)}&\le C,\\
\ea
for $|(a,\eps,\alpha)|$ sufficiently small.
Moreover, for $|(a,\eps)|$, $\|b\|_{X_1}$ sufficiently small,
all solutions of {\rm (\ref{b})} lie on the $1$-parameter manifold
$$\{b=\beta(a,\eps,T,\alpha), \quad \a \in \R\}.$$
\end{prop}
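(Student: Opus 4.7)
The plan is to apply a two-norm contraction argument to the map $\CalT$ of Corollary \ref{boundedmap}, exploiting the fact that $\CalT$ maps $X_1$ into itself with a quadratic bound while contracting in the weaker norm $\CalB_1$. By Corollary \ref{g+}, every $X_1$-solution of $\Delta(a,b,\eps,T) = 0$ is of the form $b = (\Id - {\bf S}(\eps,T))^{-1}\tilde N(a,b,\eps,T) + \omega$ with $\omega \in \kernel(\Id - {\bf S}(\eps,T)) \cap X_1$. Hypothesis (H4) of Section \ref{remarkson} forces this kernel to be one-dimensional, spanned by $(\bar U^\eps)'$, so $\omega = \alpha(\bar U^\eps)'$ and \eqref{b} is equivalent to the fixed-point equation $b = \CalT(a,b,\eps,T,\alpha)$.

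Next, for $|a|$, $|\eps|$, $|\alpha|$ sufficiently small, Corollary \ref{boundedmap} provides a radius $R > 0$ of size $O(|a|^2 + |\alpha|)$ such that $\CalT(a,\cdot,\eps,T,\alpha)$ maps the closed ball $B_R := \{b \in X_1 : \|b\|_{X_1} \le R\}$ into itself and satisfies $\|\partial_b \CalT\|_{\CalL(\CalB_1)} \le 1/2$ on $B_R$. Since the $X_1$-ball is closed in $\CalB_1$ (per the remark following Definition \ref{B-spaces}), the Banach fixed-point theorem applied in $\CalB_1$ produces a unique fixed point $\beta = \beta(a,\eps,T,\alpha) \in B_R$, yielding $\|\beta\|_{X_1} \le C(|a|^2 + |\alpha|)$. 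Local uniqueness on the manifold follows identically: any solution of \eqref{b} with $|a|$, $|\eps|$, $\|b\|_{X_1}$ small is by the previous paragraph such a fixed point for a uniquely determined $\alpha$, hence coincides with $\beta(a,\eps,T,\alpha)$.

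Third, $C^1$ regularity follows from the implicit function theorem in $\CalB_1$: the map $\CalT$ is $C^1$ from $\R^{4+1}\times \CalB_1$ to $\CalB_1$ by Corollary \ref{boundedmap}, and $\Id - \partial_b \CalT$ is uniformly invertible on $\CalB_1$ with $\|(\Id - \partial_b\CalT)^{-1}\|_{\CalL(\CalB_1)} \le 2$. Differentiating the identity $\beta = \CalT(a,\beta,\eps,T,\alpha)$ and isolating $\partial_\star \beta = (\Id - \partial_b\CalT)^{-1}\partial_\star\CalT$ for $\star \in \{a,\eps,T,\alpha\}$, the estimates in \eqref{betabds} follow term by term from the corresponding bounds on $\partial_\star \CalT$ in Corollary \ref{boundedmap}.

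The principal subtlety, already flagged in Remark \ref{rem-fin}, is the interplay of the two norms $X_1$ and $\CalB_1$: the quadratic and Lipschitz structure of $\tilde N$ lives in the weak pair $(\CalB_1,\CalB_2)$, where contraction is possible, but the $\eps$-derivative bound on $\tilde N$ needed downstream in Section \ref{sec:fin} requires the stronger localization carried by $X_1$ at its argument. The mapping property $\CalT : X_1 \to X_1$ provided by Lemma \ref{crux-cor}, together with $\CalB_1$-closedness of $X_1$-balls, is precisely what reconciles these opposing demands, yielding a fixed point $\beta$ which is bounded in $X_1$ and depends $C^1$-smoothly on parameters as a $\CalB_1$-valued map.
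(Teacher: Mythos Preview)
Your argument is correct and follows essentially the same approach as the paper, which simply states that the proposition is ``a consequence of the Banach fixed point theorem applied to map ${\cal T}$'' and refers to Proposition 2.9 of \cite{TZ2} for the details. You have correctly reconstructed those details: the two-norm contraction (self-mapping in $X_1$, contraction in $\CalB_1$, $X_1$-balls closed in $\CalB_1$), the identification of the kernel via (H4), and the parameter-regularity via the uniform invertibility of $\Id-\partial_b\CalT$ on $\CalB_1$ are precisely the ingredients of the TZ2 argument.
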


\begin{proof} A consequence of the Banach fixed point theorem applied to map ${\cal T}.$ For more details, see the proof of Proposition 2.9 in \cite{TZ2}. 
\end{proof}

\subsubsection{Bifurcation} \label{sec:fin}

The bifurcation analysis is straightforward
now that we have reduced to a finite-dimensional problem,
the only tricky point being to deal with the $1$-fold multiplicity of solutions
(parametrized by $\alpha$).

Define to this end
$$\tilde \beta(a,\eps,T,\hat \alpha):=
\beta(a,\eps,T,|a|\hat \alpha),$$
with $\hat \alpha$ restricted to a ball in $\R^1$,
noting, by (\ref{betabds}), that
$$
\|\tilde\beta\|_{X_1}, \,
\|\partial_{(a,\eps, T, \hat \a)}\tilde\beta\|_{{\cal L}(\R^4, {\cal B}_1)} \le C|a|,
$$
with $\tilde \beta$ Lipshitz in $(a,\eps,T, \hat \a)$ and $C^1$ away from $a=0$. %

Solutions $(u_1,u_2)$ of (\ref{PHeq2}) originating at 
 \begin{equation} \label{ci-beta} 
  (a,b)=(a,\tilde \b(a,\e,T,\hat a)),
  \end{equation}
by (\ref{truncshort}), remain for $0\le t\le T$ in a cone
$$\CalC:=\{(u_1,u_2):\, |u_2|\le C_1 |u_1|\},$$
$C_1>0$. 
Indeed, \eqref{truncshort} implies the bound 
$$\|u_2(t)\|_{X_1}\le C|u_1(t)|, \quad \mbox{for $\|b\|_{X_1}\le C_1|a|$,}$$
for $C_1 > 0$ small enough, for all $0\le t\le T.$

Likewise, any periodic solution of (\ref{PHeq2}) originating
in $\CalC$, since it necessarily satisfyies $\Delta=0$, must originate from data
$(a,b)$ of the form \eqref{ci-beta}.

Defining $b\equiv \tilde \beta(a,\eps,T,\hat \a)$, and recalling
invariance of $\CalC$ under flow (\ref{PHeq2}), we may view
$v(t)$ as a multiple
\begin{equation} \label{multiple}
u_2(x,t)=c(a,\eps,T, \hat \a, x, t) u_1(t)\end{equation}
of $u_1(t)$, where $c$ is bounded, Lipschitz in all arguments,
and $C^1$ away from $a=0$.
Substituting into (\ref{PHeq2})(i), we obtain a planar ODE 
$$
\d_t u_1 = \bp \gamma(\eps) & \tau(\eps)\\
-\tau(\eps) & \gamma(\eps)\ep u_1 + M(u_1,\eps, T, t, \hat \a, a)
$$
in approximate Hopf normal form, with nonlinearity $M:= \Pi N$ now
nonautonomous and depending on the additional parameters $(T, \hat \a,a)$, but, by \eqref{Qbound} and \eqref{qbound}, still
satisfying the key bounds
\ba\label{tildeNbds}
|M|, \,
|\partial_{\eps, T, \hat \a} M|&\le C|u_1|^2;
\quad
|\partial_{a, w} M|&\le C|u_1|
\ea
along with planar bifurcation criterion \eqref{nondeg}.
{}From (\ref{tildeNbds}), we find that $M$ is $C^1$ in all
arguments, also at $a=0$.
By standard arguments (see, e.g., \cite{HK,TZ1}), we thus obtain a classical Hopf
bifurcation in the variable $u_1$
with regularity $C^1$, yielding existence and uniqueness up to
time-translates of a $1$-parameter family of solutions originating in $\CalC$,
indexed by $r$ and $\delta$
with $r:=a_1$ and (without loss of generality) $a_2 \equiv 0$. Bound \eqref{expdecaybd} is a consequence of \eqref{truncshort}(i) and \eqref{multiple}. 

Finally, in order to establish uniqueness up to spatial translates, we observe, first, that, by dimensional considerations,
the one-parameter family constructed must agree with the one-parameter
family of spatial translates, and second, we argue as in \cite{TZ2}
that any periodic solution has a spatial translate originating
in $\CalC$, yielding uniqueness up to translation among all solutions
and not only those originating in $\CalC$; %
see Proposition 2.20 and Corollary 2.21 of \cite{TZ2} for further details.
\section{Nonlinear instability: Proof of Theorem \ref{instab}}\label{instabproof}

We describe a nonlinear instability result in general setting.
Consider
\begin{equation}\label{evolution}
\d_t U = LU + \d_x N(U) + R(U),
\end{equation}
well-posed in $H^s$, where 
$$
L= \d_x (B \d_x U) + \d_x (AU) + GU, 
$$
and $|N(U)|, |R(U)|\le C|U|^2$ for $|U|\le C$.
Suppose that $L$ has a conjugate pair of simple unstable eigenvalues
$\lambda_\pm=\gamma \pm i\tau$, $\gamma >0$, and the rest of the
spectrum is neutrally stable, without loss of generality
$e^{(1 - \Pi)L t}\le Ct$, where $\Pi$ is
the projection onto the eigenspace associated with $\lambda_\pm$.

Coordinatizing similarly as in Section \ref{bifsection} by
$$
 U(x,t)=u_{11} \varphi_1(x) + u_{12} \varphi_2(x) + u_2(x,t),
$$
where $\varphi_j=O(e^{-\theta |x|})$ are eigenfunctions of $L$,
denote $r(t):=|u_1|(t)$.
Then, so long as $|U|_{H^s}\le C\epsilon$, we have existence (by
variation of constants, standard continuation) of solutions
of \eqref{evolution} in $H^s$, with estimates
\begin{equation}\label{setup}
\begin{aligned}
r'=\gamma r + O(\epsilon)|U|,\\
u_2'= (1 - \Pi) L u_2 + O(\epsilon)|U|\\
\end{aligned}
\end{equation}
in $L^2$.

We shall argue by contradiction.  That is, using \eqref{setup},
we shall show, for $C>0$ fixed, $\epsilon>0$ sufficiently small, and
$|u_2(0)|_{H^1}\le Cr(0)$, that eventually $r(t)\ge \epsilon$, no matter
how small is $r(0)$, or equivalently $|U|_{H^s}(0)$.
This, of course entails nonlinear instability.

Define $\alpha(x,t):= u_2(x,t)/r(t)$.
Then,
$$
\begin{aligned}
\alpha'&= \frac{ru_2'-u_2 r'}{r^2}=
\frac{u_2'}{r} - \frac{u_2}{r}\frac{r'}{r},\\
\end{aligned}
$$
yielding after some rearrangement the equation
\begin{equation}\label{alpha}
\alpha'=((1 - \Pi) L-\gamma)\alpha + O(\epsilon \big(e^{-\theta|x|}+ |\alpha|
+|\alpha|^2)\big).
\end{equation}
{}From \eqref{alpha} and standard variation of constants/contraction mapping
argument, we find that $|\alpha(t)|_{H^1}$ remains less than or equal to 
$C|\alpha(t_0)|_{H^1}$ for $t-t_0$ small.

By variation of constants and the semigroup bound
$|e^{((1 - \Pi) L-\gamma)t}|_{H^1 \to H^1}\le Ce^{-\gamma t}$ (note: $\gamma$
is scalar so commutes with $(1 - \Pi)L$), we obtain
$$
\delta(t)\le C(|\alpha(0)|_{H^1} + \epsilon (1+\delta(t)^2) ),
$$
for $\delta(t):=\sup_{0\le \tau \le t} |\alpha(\tau)|_{H^1}$.
So long as $\delta$ remains less than or equal to unity and
$C\epsilon \le \frac{1}{2}$, this yields
$$
\delta(t)\le 2C(|\alpha(0)|_{H^1} + \epsilon ),
$$
and thus 
$$
\delta(t)\le  2C|\alpha(0)|_{H^1} + \frac{1}{2}.
$$

Substituting into the radial equation, we obtain
$$
r'\ge (\gamma -(1+\delta)\epsilon)r,
$$
yielding exponential growth for $\epsilon$ sufficiently small.
In particular, $r\ge C\epsilon$ for some time, and thus $|U|_{H^1}\ge \epsilon$,
a contradiction.
We may conclude, therefore, that $|U|_{L^2}$ eventually grows larger than any
$\epsilon$, no matter how small the initial size $r(0)$, and thus we may
conclude {\it instability} of the trivial solution $U\equiv 0$.
Taking now \eqref{evolution} to be the perturbation equations about
a strong detonation profile, we obtain the result of nonlinear instability
of the background profile $\bar U$.

\br\label{scal}
In the easier case of a single, real eigenvalue, the scalar, $w$ equation,
would play the role of the radial equation here. This case is subsumed in
our analysis as well.
\er

\end{document}